\documentclass[11pt]{amsart}
\input xy
\xyoption{all}

\usepackage{amsmath,amsfonts,amsthm,amssymb,latexsym,amscd,amsopn,graphics,color,enumerate,lscape,mathtools,mathrsfs,placeins,stmaryrd}
\usepackage{hyperref}
\usepackage{tikz-cd}
\usepackage{todonotes}
\usepackage{comment}
\usepackage{enumitem}
\usepackage{rotating}
\usepackage[nocompress,noadjust]{cite}
\usepackage{stmaryrd}
\usepackage[margin=1in]{geometry}

\setlength{\parindent}{30pt}

\allowdisplaybreaks

\theoremstyle{plain}

\newtheorem{theorem}{Theorem}

  \newtheorem{prop}[theorem]{Proposition}
  
  \newtheorem{lemma}[theorem]{Lemma}
  
  \newtheorem{question}[theorem]{Question}
\theoremstyle{definition}

\theoremstyle{remark}
 \newtheorem*{remark}{Remark}

\setcounter{secnumdepth}{5}

\newtheoremstyle{TheoremNum}
        {8.0pt plus 2.0pt minus 4.0pt}
        {8.0pt plus 2.0pt minus 4.0pt}
        {\itshape}
        {}
        {\bfseries}
        {.}
        {.5em}
        {\thmname{#1}\thmnote{ \bfseries #3}}
\theoremstyle{TheoremNum}

\makeatletter
\def\@tocline#1#2#3#4#5#6#7{\relax
  \ifnum #1>\c@tocdepth 
  \else
    \par \addpenalty\@secpenalty\addvspace{#2}%
    \begingroup \hyphenpenalty\@M
    \@ifempty{#4}{%
      \@tempdima\csname r@tocindent\number#1\endcsname\relax
    }{%
      \@tempdima#4\relax
    }%
    \parindent\z@ \leftskip#3\relax \advance\leftskip\@tempdima\relax
    \rightskip\@pnumwidth plus4em \parfillskip-\@pnumwidth
    #5\leavevmode\hskip-\@tempdima
      \ifcase #1
       \or\or \hskip 30pt \or \hskip 2em \else \hskip 3em \fi%
      #6\nobreak\relax
    \hfill\hbox to\@pnumwidth{\@tocpagenum{#7}}\par
    \nobreak
    \endgroup
  \fi}
\makeatother

\DeclareMathOperator{\Sym}{Sym}

\DeclareMathOperator{\Stab}{Stab}
\def\Z{{\mathbb Z}}

\def\B{{\mathcal B}}

\def\Z{{\mathbb Z}}

\def\GL{{\rm GL}}

\def\bs{{\rm bs}}
\def\Sym{{\rm Sym}}

\def\Stab{{\rm Stab}}
\def\diag{{\rm diag}}
\def\inv{{\rm inv}}

\def\P{{\mathbb P}}

\def\red{{\rm red}}
\def\dist{{\rm red}}
\def\Vol{{\rm Vol}}

\def\R{{\mathbb R}}
\def\C{{\mathbb C}}
\def\F{{\mathbb F}}
\def\bR{{\mathbb R}}

\def\FF{{\mathcal F}}
\def\RR{{\mathcal R}}

\def\Q{{\mathbb Q}}

\def\Z{{\mathbb Z}}
\def\P{{\mathbb P}}
\def\F{{\mathbb F}}
\def\Q{{\mathbb Q}}

\def\cJ{{\mathcal J}}

\def\cB{{\mathcal B}}

\def\cF{{\mathcal F}}

\def\GG{G}
\def\cR{{\mathcal R}}

\def\Z{{\mathbb Z}}

\newcommand{\PGL}{\mathrm{PGL}}

\newcommand{\SO}{\mathrm{SO}}

\newcommand{\nc}{\newcommand}
\nc{\on}{\operatorname}
\nc{\renc}{\renewcommand}
\nc{\wt}{\widetilde}
\nc{\defeq}{\vcentcolon=}
\nc{\eqdef}{=\vcentcolon}
\nc{\Spec}{\on{Spec}}
\nc{\ol}{\overline}
\renc{\d}{\partial}
\nc{\mc}{\mathcal}

\def\AA{{\mathcal A}}
\def\stabsize{{\theta}}
\def\fundset{{\mathcal R}}


\makeatletter
\@namedef{subjclassname@2020}{%
  \textup{2020} Mathematics Subject Classification}
\makeatother

\newcommand\rddots
   {\mathinner{\mkern1mu
       \raise 1pt\hbox{.}\mkern2mu
       \raise 4pt\hbox{.}\mkern2mu
       \raise 7pt\hbox{.}\mkern1mu}}

\title[Geometry-of-Numbers Methods in the Cusp]{Geometry-of-Numbers Methods in the Cusp
\vspace*{-0.1in}
}
\author{Arul Shankar, Artane Siad, Ashvin A.~Swaminathan, Ila Varma}

\subjclass[2020]{11R29, 11R45 (primary), 11H55, 11E76 (secondary)}
\keywords{Geometry-of-numbers, coregular representations, fundamental domains, cusps, irreducibility}

\AtEndDocument{\medskip{\footnotesize
  \noindent\textsc{Department of Mathematics, University of Toronto, \mbox{Toronto, ON M5S 2A3}} \par
  \textit{E-mail address}, Arul Shankar: \texttt{arul.shnkr@gmail.com}
 \\[0.1cm]
  \noindent\textsc{Department of Mathematics, Princeton University, \mbox{Princeton, NJ 08544, \&}} \par
  \noindent\textsc{School of Mathematics, Institute for Advanced Study, \mbox{Princeton, NJ 08540}} \par
  \textit{E-mail address}, Artane Siad: \texttt{as4426@princeton.edu} \\[0.1cm]
  \noindent\textsc{Department of Mathematics, Harvard University, \mbox{Cambridge, MA 02138}} \par
  \textit{E-mail address}, Ashvin A. Swaminathan: \texttt{swaminathan@math.harvard.edu}\\[0.1cm]
  \noindent\textsc{Department of Mathematics, University of Toronto, \mbox{Toronto, ON M5S 2A3}} \par\vspace*{-0.1cm}
  \textit{E-mail address}, Ila Varma: \texttt{ila@math.toronto.edu}
}}
\date{\today}

\begin{document}

\vspace*{-0.2in}

\maketitle

\vspace*{-0.4in}
\begin{abstract}
%
In this article, we develop new methods for counting integral orbits having bounded invariants that lie inside the cusps of fundamental domains for coregular representations. 
We illustrate these methods for a representation of cardinal interest in number theory, namely that of the split orthogonal group acting on the space of quadratic forms.
\end{abstract}

\tableofcontents

\vspace{-30pt}
\section{Introduction} \label{sec-intro}

A \emph{coregular representation} $(G,W)$ consists of a reductive algebraic group $G$, defined over $\Z$, and a finite-dimensional representation $W$ of $G$, also defined over $\Z$, such that the ring of polynomial invariants for the action of the semisimplification of $G$ on $W$ is freely generated (say by the elements $p_1,\ldots,p_k$). We then say that $U={\rm Aff}^k$ is the {\it space of relative invariants}, and note that for any ring $R$, we have the natural map $W(R)\to U(R)$ given by $w\mapsto (p_1(w),\ldots,p_k(w))$.\footnote{More algebro-geometrically, the ring of relative invariants is $\Z[W]^G$ while the space  of relative invariants is $U \defeq W \sslash G = \Spec\left(\Z[W]^G \right)$.} is freely and finitely generated over $\Z$. Many objects of interest in number theory and arithmetic geometry admit natural parametrizations in terms of integral orbits of coregular representations.
Typically, these parametrizations impose the following three pieces of structure on the pair $(G,W)$ and the associated \mbox{invariant space $U$:}
\begin{enumerate}[leftmargin=20pt]
    \item an algebraic notion of {\it nondegeneracy} for the orbits of $G$ on $W$;
    \item a natural notion of {\it height} on $U(\R)$, which then lifts to a $G(\R)$-invariant notion of height on $W(\R)$;
    \item an arithmetic notion of {\it irreducibility} for the orbits of $G(\Z)$ on $W(\Z)$.
\end{enumerate}
We note that {\it prehomogeneous} representations, i.e., representations whose rings of invariants are generated by a single element, are all coregular. In that case, the single generating invariant (usually termed the discriminant) is the height that is used.

A landmark result of Borel and Harish-Chandra \cite{MR147566} implies that the number of nondegenerate $G(\Z)$-orbits on $W(\Z)$ having bounded height is finite. In light of this result, it is natural to ask the following fundamental question: {\it what are the asymptotics for the number of nondegenerate $G(\Z)$-orbits on $W(\Z)$ having bounded height?} Answering this question for just those integral orbits that are \emph{irreducible} has proven to be a problem with significant applications in arithmetic statistics, the study of distributions of arithmetic objects. Indeed, counting integral irreducible orbits of coregular representations played a key role in the proofs of many breakthrough results, from determining asymptotics for $S_n$-number fields of small fixed degree ordered by discriminant (see~\cite{MR491593,MR2183288,MR2745272,MR4035297}), to computing average sizes of $p$-class groups in certain families of number fields for small primes $p$ (see~\cite{MR491593, MR2183288,MR3782066,BSHpreprint,Siadthesis1,Siadthesis2,Swpreprint,BSSpreprint}), to calculating average $n$-Selmer ranks of elliptic curves and hyperelliptic Jacobians for small $n$ \mbox{(see~\cite{MR3272925, MR3275847, 1312.7333, 1312.7859,MR3156850,MR3719247,BSSpreprint, MR3968769, Laga1, Laga2}), and many more.}

More recently, counting {\it reducible} integral orbits of coregular representations has emerged as a problem of significant interest in its own right, yielding applications toward determining the sizes of the $3$-torsion in the class groups of quadratic orders \cite{MR289428,MR3471250}, counting octic $D_4$-number fields ordered by Artin conductor \cite{D4preprint} and by discriminant \cite{MOpreprint}, studying families of elliptic curves ordered by conductor \cite{MR4277110}, and carrying out squarefree sieves on families of polynomials \cite{sqfrval} and binary $n$-ic forms \cite{sqfrval2}. However, in all of these applications, the counts of reducible orbits were obtained using \emph{ad hoc} methods. Furthermore, the latter three results cited above do not actually prove asymptotics, and merely obtain upper bounds on the number of reducible orbits of bounded height. This is in stark contrast with the case of irreducible orbits, for which systematic methods have been developed to obtain precise asymptotics with power-saving error terms.

The purpose of this article is to devise new systematic techniques for counting reducible orbits. We illustrate our techniques for a representation that features prominently in the literature on arithmetic statistics, namely the action of the orthogonal group of the split $n$-ary quadratic form on the space of quadratic forms in $n$ variables, where $n \geq 3$ is an arbitrary fixed integer. In particular, we provide a complete answer to the fundamental question stated above for each representation in this infinite family indexed by $n$; see Section~\ref{sec-states} for precise statements of our main theorems. Before proving these general theorems, we provide the reader with a gentle introduction to our new method by illustrating how it applies in the context of a low-dimensional example, namely the action of $\on{PGL}_2$ on the space $\Sym_4(2)$ of binary quartic forms; see \S\ref{sec-binaryquartics}.

\subsection{Background on orbit-counting, and relation to earlier work} \label{sec-thebackground}

In this section, we summarize the orbit-counting methods that feature in the literature on arithmetic statistics leading up to the present article, and we describe the context for our new methods \mbox{of counting reducible orbits.}

\subsubsection*{The critical dichotomy} \label{sec-genapp}

Let $(G,W)$ be a coregular representation with space of relative invariants $U$, and suppose that a family of arithmetic objects can be parametrized in terms of certain $G(\Z)$-orbits on $W(\Z)$. We note that a natural source of examples of such coregular representations are {\it prehomogeneous} representations, i.e., the ring of relative invariants is generated by a single element, usually called the {\it discriminant}.

Further suppose that the space $U(\R)$ admits a natural notion of height, and define the height of (the $G(\R)$-orbit of) an element $w \in W(\R)$ to be the height of the image of $w$ under the natural map $W(\R) \to U(\R)$. In the prehomogeneous case, this height is usually taken to be the absolute value of the discriminant.
Under the above assumptions, the problem of counting arithmetic objects in the family with bounded height can be translated into the problem of counting lattice points of bounded height in a fundamental domain $\mc{D}$ for the action of $G(\Z)$ on $W(\R)$. This latter problem is complicated by the fact that the fundamental domains $\mc{D}$ is usually not compact, even for the subset $\mc{D}_X$ of points in $\mc{D}$ with height less than $X$. Indeed, the height-bounded fundamental domain $\mc{D}_X$ typically consists of a bounded region known as the \emph{main body}, along with one or more \emph{cusps}, which are long tentacle-like regions going off to infinity.

Surprisingly, in most of the coregular representations considered in the literature thus far, the following dichotomy holds:
\begin{enumerate}[leftmargin=20pt]
\item A proportion of $100\%$ of irreducible orbits lie in the main body; and a proportion of $100\%$ of points in the main body are irreducible.
\item A proportion of $100\%$ of reducible orbits lie in the cusp; and a proportion of $100\%$ of points in the cusp are reducible.
\end{enumerate}
If Properties (1) and (2) above hold, then the count of points in the main body of $\mc{D}_X$ gives an asymptotic for the number of irreducible $G(\Z)$-orbits on $W(\Z)$ having height less than $X$. This main body count can be determined in a systematic way: using geometry-of-numbers methods, the count can be expressed in terms of the volume of the main body, and this volume can then be computed by performing a suitable change-of-variables. The difficulty in counting the reducible orbits, which predominate in the cusp(s) of $\mc{D}_X$, is that geometry-of-numbers methods do not directly apply in such regions. In fact, the asymptotic count of reducible orbits is {\it not} given in terms of the volume of the corresponding cuspidal regions. In the examples treated in this paper, we find that the volume of the cuspidal region is an underestimate, and that the actual answer is given in terms of a certain weighted volume.

\subsubsection*{Historical context} \label{sec-history}

The development and use of the orbit-counting strategy summarized above goes back centuries. Mertens \cite{MR1579608} and Siegel \cite{MR12642} studied the action of $\on{GL}_2$ on the space $\on{Sym}_2(2)$ of binary quadratic forms, which has a unique polynomial invariant, namely the discriminant. They developed geometry-of-numbers methods to count the number of $\GL_2(\Z)$-orbits on $\on{Sym}_2 \Z^2$ with bounded discriminant, resolving conjectures of Gauss on the average sizes of class numbers of quadratic orders. In the series of papers~\cite{MR43822,MR1574296,MR43823}, Davenport considered the action of $\on{GL}_2$ on the space $(\on{Sym}^3(2))^*$ of binary cubic forms, which also has the discriminant as its unique polynomial invariant. Using the strategy described above, he obtained asymptotics for the number of irreducible $\GL_2(\Z)$-orbits on $(\on{Sym}^3 \Z^2)^*$ having bounded discriminant, and in collaboration with Heilbronn, he combined these asymptotics with results from class field theory and sieve methods to determine the density of discriminants of cubic fields~\cite{MR491593}.

The main obstruction to generalizing the work of Mertens, Siegel, and Davenport--Heilbronn to other representations was proving that the number of irreducible points in the cusps is negligible. 
A pioneering advance was made by Bhargava (see~\cite{MR2745272}), who introduced an ``averaging'' method that has the effect of thickening the cusps, making them a bit more amenable to doing geometry-of-numbers. Bhargava's averaging method, which applies to general coregular representations, gives a systematic way to bound the number of points in cuspidal regions and thus opened the door to study the distributions of a wide variety of arithmetic objects. However, the averaging method has, until the present paper, only been used to obtain upper bounds in the cusp and new methods are needed to obtain precise asymptotics.

\subsection{Notation and setup}
We first study a concrete representation in \S\ref{sec-binaryquartics}, before proceeding to our main results on families of representations. The concrete representation we consider is that of $\PGL_2$ acting on binary quartic forms. The notation in this case is quite manageable, and largely follows that of~\cite{MR3272925}.
In this section, we introduce the other (family of) representations studied in this paper, and we set up the notation necessary to state our main results.
Let $\mc{A}$ be the anti-diagonal $n \times n$ matrix with all anti-diagonal entries equal to $1$; if viewed as a quadratic form, we note that that $\mc{A}$ is split (i.e., it has a maximal isotropic space defined over $\Q$) and unimodular.

\subsubsection*{The representation}

We first define the group $\GG$, which is slightly different for $n$ odd and $n$ even:
\begin{itemize}[leftmargin=20pt]
\item When $n$ is odd, we take $G \defeq \on{SO}_{\mc{A}}$ to be the split special orthogonal group scheme over $\Z$ corresponding to $\mc{A}$. That is, we have \mbox{$G(R) = \{g \in \on{SL}_n(R) : \,  g^t\mc{A}\,g = \mc{A}\}$} for any $\Z$-algebra $R$.
\item When $n$ is even, we take $G \defeq \on{O}_{\mc{A}}/\mu_2$ to be the split projective orthogonal group scheme over $\Z$ corresponding to $\mc{A}$. That is, we take $G$ to be the cokernel of the inclusion $\mu_2 \hookrightarrow \on{O}_{\mc{A}}$ of group schemes over $\Z$, where \mbox{$\on{O}_{\mc{A}} \defeq \{g \in \on{GL}_n(R) : \,  g^t\mc{A}\,g = \mc{A}\}$ for any $\Z$-algebra $R$.}
\end{itemize}
We now define the representation $W$ of $G$. Let $W$ denote the affine $\Z$-scheme whose $R$-points consist of the set of $n \times n$ symmetric matrices with entries in $R$ (i.e., classically integral quadratic forms over $R$). Then $W$ has a natural structure as an $\GG$-representation, where the (left) action is given by $g \cdot B = (g^{-1})^t B g^{-1}$ for $g\in\GG$ and $B\in W$. We note that $W$ can also be interpreted as the space of self-adjoint operators for $\mc{A}$ over $R$ by identifying a self-adjoint operator $T$ with $B = -\mc{A}T \in W$.

The orbits of the representation of $\GG$ on $W$ have been studied extensively in the literature. For example, Bhargava and Gross~\cite{MR3156850} (resp.\ Shankar and Wang~\cite{MR3719247}) obtained asymptotics for the number of \emph{irreducible} orbits of $\GG(\Z)$ on $W(\Z)$ having bounded height when $n$ is odd (resp., when $n$ is even). These asymptotics have yielded a striking array of applications: they were utilized by the aforementioned authors to bound the average sizes of the $2$-Selmer groups of monic hyperelliptic Jacobians of any given dimension; by Swaminathan to prove that most odd-degree binary forms fail to primitively represent a square~\cite{super}; and by Siad~\cite{Siadthesis1,Siadthesis2} to bound the average size of the $2$-torsion subgroup of the class groups of monogenic fields of any given degree. Furthermore, by proving an upper bound on the \emph{reducible} orbits of this representation, Bhargava, Shankar, and Wang \cite{sqfrval} determined the probability that a monic integral polynomial has squarefree discriminant.

\subsubsection*{The invariants} Let $U$ denote the affine $\Z$-scheme whose $R$-points consist of monic degree-$n$ polynomials with coefficients in $R$. For any element $B \in W(R)$, the monic degree-$n$ polynomial
\begin{equation} \label{eq-definvtform}
\on{inv}(B) \defeq (-1)^{\lfloor \frac{n}{2} \rfloor} \det (x \mc{A} + B) \in U(R)
\end{equation}
is invariant under the action of $\GG(R)$; in fact, by~\cite[Section 8.3, part (VI) of Section 13.2]{MR0453824}, its coefficients freely generate the ring of polynomial invariants for the action of $\GG$ on $W$. Given $f \in U(R)$, we write $$\on{inv}^{-1}(f) \defeq \{B \in W(R) : \on{inv}(B) = f\}.$$

\subsubsection*{The notion of nondegeneracy}
Let $R$ be an integral domain. Then we say that a monic polynomial with coefficients in $R$ is {\it nondegenerate} if it has nonzero discriminant. We say that an element $B \in W(R)$ is {\it nondegenerate} if the monic polynomial $\on{inv}(B)$ has nonzero discriminant.

When $R = \Z$ or $\R$, it is convenient to partition the set of nondegenerate elements in $U(R)$ according to the number of real roots, and to lift this partition to $W(R)$. To this end, let $0 < r \leq n$ be odd if $n$ is odd, and let $0 \leq r \leq n$ be even if $n$ is even. For $R = \Z$ or $\R$, we define:
\begin{align*}
U(R)^{(r)} & \defeq \{f \in U(R) : f  \text{ is nondegenerate and exactly $r$ real roots} \}; \\
W(R)^{(r)} & \defeq \{B \in W(R) : \on{inv}(B) \in U(R)^{(r)}\}.
\end{align*}

\subsubsection*{The height}
We order elements of $U(\R)$ and $W(\R)$ by \emph{height}. Given a polynomial $f(x) = x^n + \sum_{i = 1}^n f_i x^{n-i} \in U(\R)$, we define its \emph{height} $\on{H}(f)$ by
$$\on{H}(f) \defeq \max_{1 \leq i \leq n}\{\vert f_i \vert^{1/i}\},$$
and given $B \in W(\R)$, we define its height by $\on{H}(B) \defeq \on{H}(\on{inv}(B))$. For any $X > 0$, we write $$N^{(r)}(X) \defeq \#\{f \in  U(\Z)^{(r)} : \on{H}(f) < X\}.$$ Notice that we have
$$N^{(r)}(X)\sim\mathcal{V}^{(r)}(X)\defeq\Vol(\{f \in  U(\R)^{(r)} : \on{H}(f) < X\}) \asymp X^{\dim W} = X^{\frac{n^2+n}{2}},$$
where the volume is computed with respect to the Euclidean measure on $U(\R)$, normalized so that $U(\Z)$ has covolume $1$.

\subsubsection*{The notion of reducibility}  When $R$ is a principal ideal domain with field of fractions $K$, we can partition the nondegenerate elements in $W(R)$ into two natural $\GG(R)$-invariant subsets. We call (the $\GG(R)$-orbit of) a nondegenerate element $B \in W(R)$ \mbox{\emph{reducible} if}:
\begin{itemize}[leftmargin=20pt]
    \item $\mc{A}$ and $B$, viewed as quadratic forms, share a maximal isotropic subspace over $K$ if $n$ is odd; or
    \item $B$ has an isotropic subspace of dimension $\frac{n-2}{2}$ over $K$ contained within a maximal isotropic subspace for $\mc{A}$ over $K$ if $n$ is even.
\end{itemize}
We say that $B \in W(R)$ is \emph{irreducible} if it is nondegenerate and not reducible. There are a few good arithmetic reasons to think of such elements as reducible. For example, pairs $(\mc{A},B)$ correspond to triples $(R',I,\delta)$, where $R'$ is a rank $n$ ring over $R$, $I$ is an ideal in $R'$, and $\delta\in L^\times/L^{\times2}$, where $L=R'\otimes K$, such that $I^2\subset (\delta)$ and $N(I)^2 = N(\delta)$. Then it was proven in \cite{Siadthesis1,Siadthesis2} that $(\mc{A},B)$ is reducible if and only if $\delta \equiv 1$. As another example, pairs $(\mc{A},B)$ also correspond to classes $\sigma \in H^1(K,J[2])$, where $J$ is the Jacobian of the monic hyperelliptic curve $y^2=\pm\det(\mc{A}+B)$, and $(\mc{A},B)$ is reducible if and only if $\sigma$ is trivial.

Write the coordinates of $W$ as $[b_{ij}]_{1\leq i\leq j\leq n}$. Let $W_0$ be the subscheme of $W$ obtained by setting $b_{ij}=0$ for all $(i,j)$ with $i+j<n$. Observe that, if $R$ is a principal ideal domain, then every element of $W_0(R)$ is reducible; for this reason, we call $W_0$ the \emph{reducible hyperplane}. The reducible hyperplane is sent to itself under the action of the lower-triangular subgroup $\mc{P} \subset \GG$, and so we obtain a well-defined representation of $\mc{P}$ on $W_0$. In what follows, for a $\Z$-algebra $R$ and any subset $S \subset  W(R)$, we sometimes write $S_0 \subset S$ to denote the subset $S\cap W_0(R)$.

\subsection{Statements of main theorems} \label{sec-states}

Having set up the notation, we are now in position to state our first main result. Define the constants $C_n^{{\rm fin}}$ and $C_{n,r}^{{\rm inf}}$ as follows:
\begin{equation*}
C_n^{{\rm fin}}\defeq\begin{cases}
\displaystyle\prod_{i = 1}^{\lfloor \frac{n}{2} \rfloor} \zeta(2i), & \text{ if $2\nmid n$;}
\\\displaystyle
\zeta(\tfrac{n}{2}) \prod_{i = 1}^{\frac{n-2}{2}} \zeta(2i), & \text{ if $2\mid n$;}
\end{cases}
\qquad\quad
C_{n,r}^{{\rm inf}}\defeq
\begin{cases}
\mathcal{V}^{(r)}(1), & \text{ if $2\nmid n$;}
\\
2^{-\frac{n}{2}}
\mathcal{V}^{(r)}(1), & \text{ if $2\mid n$;}
\end{cases}
\end{equation*}
Then we have the following result, which gives the total count of reducible $\GG(\Z)$-orbits on $W(\Z)^{(r)}$:
\begin{theorem} \label{thm-maincount2}
The number of reducible $\GG(\Z)$-orbits on $W(\Z)^{(r)}$ having height up to $X$ is given by
$$C_n^{{\rm fin}} \cdot \big(C_{n,r}^{{\rm inf}} \cdot X^{\frac{n^2+n}{2}}\big) + o\big(X^{\frac{n^2+n}{2}}\big).$$
\end{theorem}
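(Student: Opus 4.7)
The plan is to reduce the count to an orbit count on the reducible hyperplane $W_0$ and then to evaluate that count via a geometry-of-numbers argument performed internal to $W_0$. In the first, algebraic step, I would show that every reducible $\GG(\Z)$-orbit on $W(\Z)^{(r)}$ admits a representative in $W_0(\Z)$. Over $\Q$ this is direct from the definition of reducibility: a reducible $B$ shares with $\mc{A}$ an isotropic subspace of dimension $\lfloor (n-1)/2 \rfloor$, and after acting by $\GG(\Q)$ one can move this subspace to the standard one, producing a representative in $W_0(\Q)$. Upgrading to $\Z$ uses unimodularity of $\mc{A}$ together with a Smith-normal-form / strong-approximation argument. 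Conversely, two elements $B, B' \in W_0(\Z)^{(r)}$ that are $\GG(\Z)$-equivalent need not lie in the same $\mc{P}(\Z)$-orbit, but the overcounting is controlled by a double-coset structure $\mc{P}(\Z) \backslash \GG(\Z) / \Stab_{\GG(\Z)}(B)$ which I would either show is generically trivial or account for explicitly in the final tally.

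In the second, analytic step, I would count $\mc{P}(\Z)$-orbits on $W_0(\Z)^{(r)}$ of bounded height. Using the Levi decomposition $\mc{P} = M N$ (with $M$ isogenous to $\GL_{\lfloor (n-1)/2 \rfloor}$, together with an $\SO_2$-factor when $n$ is even, and $N$ the unipotent radical), I would build a Siegel-type fundamental domain $\mc{F}_{\mc{P}}$ for $\mc{P}(\Z)$ on $\mc{P}(\R)$ as an Iwasawa-coordinate domain for $M$ times the unit cube for $N(\Z)\backslash N(\R)$, and couple this with a choice of section $\mathcal{R} \subset W_0(\R)^{(r)}$ for the invariant map restricted to $W_0$. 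The product $\mc{F}_{\mc{P}} \cdot \mathcal{R}$ then covers each $\mc{P}(\Z)$-orbit of height $< X$ a controlled number of times. I would apply Bhargava's averaging method with averaging set inside $M(\R)$ to thicken the remaining cusp at infinity, after which a standard lattice-point count gives the volume of the averaged region to leading order. By change of variables from coordinates on $W_0(\R)$ to the product of Haar measure on $\mc{P}(\R)$ and Lebesgue measure on $U(\R)^{(r)}$, this volume factors: the integral over $\mc{F}_{\mc{P}}$ against powers of the Iwasawa coordinates (coming from the Jacobian of the change of variables) evaluates to the product of zeta values $C_n^{{\rm fin}}$, while the invariant integral evaluates to $C_{n,r}^{{\rm inf}} \cdot X^{(n^2+n)/2}$, with the factor $2^{-n/2}$ in the even case accounting for the number of maximal isotropic flags of $\mc{A}$ containing the fixed $(n-2)/2$-dimensional isotropic subspace.

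The main obstacle will be extracting a true asymptotic, rather than merely an upper bound, from the averaging step. Bhargava's method as previously deployed yields only upper bounds, whereas here $\mc{F}_{\mc{P}}$ itself carries nontrivial cuspidal structure inherited from $\GL_{\lfloor (n-1)/2 \rfloor}(\Z)$, so the counting problem inside $W_0$ is already of the same type as the ambient one but for a group of smaller rank. I expect the paper to address this via induction on $n$: points of $W_0(\Z)^{(r)}$ lying deep in the inner cusp of $\mc{F}_{\mc{P}}$ should themselves be reducible relative to a smaller orthogonal sub-problem, and their count should be governed by the theorem applied to a smaller value of $n$. Verifying that this induction closes cleanly — i.e., that peeling off the inner-cusp contributions reveals exactly the main term $C_n^{{\rm fin}} \cdot C_{n,r}^{{\rm inf}} \cdot X^{(n^2+n)/2}$ with all remainders bounded by $o(X^{(n^2+n)/2})$ — is the key technical point.
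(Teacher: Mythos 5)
Your skeleton---pass to $\mc{P}(\Z)$-orbits on $W_0(\Z)$, control overcounting via generic triviality of stabilizers, then count via geometry of numbers and a Jacobian change of variables---matches the paper's outline. But there is a structural error that misdirects the rest: the subgroup $\mc{P}$ in this paper is the lower-triangular (Borel) subgroup, so its Levi quotient is the split maximal torus $T$ of rank $\lfloor n/2\rfloor$, \emph{not} something isogenous to $\GL_{\lfloor (n-1)/2\rfloor}$. This matters a great deal: $T(\Z)$ is finite, so the ``Siegel domain for $\mc{P}(\Z)$ on $\mc{P}(\R)$'' has no internal $\GL$-cusp and there is no nested cuspidal structure to induct on. The proposed induction on $n$ is thus aimed at a difficulty that does not arise. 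The actual obstruction is that after Bhargava averaging (which the paper does over a set $G_0\subset \GG(\R)$, not inside a Levi), the translated regions $usX\cB\subset W_0(\R)$ with $s\in T$ deep in the cusp are extremely skewed, so Davenport's lemma gives only an upper bound if applied naively. The paper resolves this by \emph{slicing}: it fixes the anti-diagonal coordinates $b=(b_{1(n-1)},\dots,b_{\lfloor n/2\rfloor\lceil n/2\rceil})$, applies Davenport to each slice $(usX\cB)|_b$ where the count is honestly asymptotic to the volume, and then sums over $b\in(\Z\smallsetminus\{0\})^{\lfloor n/2\rfloor}$. The Euler products $\prod\zeta(2i)$ (resp. $\zeta(n/2)\prod\zeta(2i)$) appear precisely from summing $\mc{Z}(b)^{-1}$ over $b$; they do not come from a fundamental volume of an internal $\GL$. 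Relatedly, your proposed explanation of the $2^{-n/2}$ factor for even $n$ (a count of isotropic flags) is incorrect: as the paper's remark after Theorem 1.1 notes, it reflects the parity constraint that the odd-index coefficients of $\on{inv}(B)$ are always even, which affects the measure on the invariant space rather than a combinatorial multiplicity.

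To salvage the blind proposal you should replace the induction-on-cusps step by the slicing argument, and the Levi-of-$\GL$ picture by the torus picture; the change-of-variables you envisage is indeed used, but must account for the fact that the right Haar measure on $\mc{P}(\R)$ is \emph{not} unimodular, and the cancellation of the modular character against the weight $\lambda$ (Proposition 3.4 in the paper) is one of the key algebraic identities one must prove, rather than a routine unimodular change of variables. The paper also gives a second, independent proof (``Method II'') via a strong local-to-global principle for $\mc{P}(\Z)$-orbits on $W_0(\Z)$ combined with a squarefree sieve on $U(\Z)$, which you did not anticipate at all; it sidesteps archimedean cusp analysis entirely.
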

\begin{remark}
The factor $C_{n,r}^{{\rm inf}} \cdot X^{\frac{n^2+n}{2}}$ occurring in Theorem~\ref{thm-maincount2} is an asymptotic for the number of invariant polynomials that arise from orbits of height up to $X$. Notice that $C_{n,r}^{{\rm inf}} \cdot X^{\frac{n^2+n}{2}} \sim N^{(r)}(X)$ when $n$ is odd and that $C_{n,r}^{{\rm inf}} \cdot X^{\frac{n^2+n}{2}} \sim 2^{-\frac{n}{2}}N^{(r)}(X)$ when $n$ is even. The extra factor of $2^{-\frac{n}{2}}$ occurs when $n$ is even because for any $B \in W(\Z)$, the $x^{i}$-coefficient of $\on{inv}(B)$ is divisible by $2$ for every odd number $i \in \{1, 3, \dots, n-1\}$ (see~\cite[Theorem~80]{Siadthesis2}).
\end{remark}
As mentioned above, the corresponding asymptotics for the irreducible $\GG(\Z)$-orbits on $W(\Z)^{(r)}$ have been obtained previously in~\cite[Theorem 10.1 and Equation (10.27)]{MR3156850} when $n$ is odd and in~\cite[Theorem 20 and Equation (38)]{MR3719247} when $n$ is even. The main term exponents for the irreducible and reducible cases are the same, and remarkably, the asymptotics for both cases have the same leading constant $C_n^{\on{fin}}$, up to a rational factor! Even more surprisingly, the manner in which $C_n^{\on{fin}}$ arises is completely different for the irreducible and reducible counts. In the irreducible case, this constant arises from the fundamental volume of the group $\GG$---i.e., the volume with respect to a suitably normalized Haar measure of $\GG(\Z) \backslash \GG(\R)$. However, in the reducible case considered in this paper, it arises from a summation of volumes of lower-dimensional slices of the cuspidal regions of a fundamental domain for $\GG(\Z)$ on $W(\R)$. These slices have negligible volume unless they are very high up in the cusp. Thus, they do not detect most of the main body volume, and this appears to be a genuinely different way of obtaining the constant $C_n^{\on{fin}}$.

In light of the above discussion, we are led to pose the following natural question, which we have answered in the affirmative for the action of $\GG$ on $W$:
\begin{question} \label{quest-two}
Let $(G,W)$ be a coregular representation with natural notions of nondegeneracy, height, and reducibility. Let $N^{{\rm irr}}(X)$ $($resp., $N^{{\rm red}}(X)${}$)$ denote the number of irreducible $($resp., reducible$)$ $G(\Z)$-orbits on $W(\Z)$ having height less than $X$.
\begin{enumerate}[leftmargin=20pt]
\item[{\rm (a)}] Is it true that as $X$ tends to infinity, we have $N^{{\rm irr}}(X)\asymp N^{{\rm red}}(X)$?
\item[{\rm (b)}] If so, then is it true that $\lim_{X\to \infty} \big(N^{{\rm irr}}(X)/N^{{\rm red}}(X)\big)$ is a rational constant?
\end{enumerate}
\end{question}
We note that the answers to both parts of Question~\ref{quest-two} are also ``yes'' for the action of $\on{GL}_2$ on $\on{Sym}_3(2)$ by work of Shintani~\cite[Chapter 2, Section 7, Remark 2]{MR289428} and Bhargava--Varma~\cite[Section 4.1.2]{MR3471250}. It follows by combining our results in \S\ref{sec-binaryquartics} with the irreducible count obtained by Bhargava and Shankar in their paper on binary quartic forms~\cite{MR3272925} that the answers are also ``yes'' for the action of $\on{GL}_2$ on $\on{Sym}_4(2)$. On the other hand, the answer to the first part of Question~\ref{quest-two} is ``no'' for the action of $\on{SL}_n$ on $2 \otimes \on{Sym}^2(n)$, where $n \geq 4$ is an arbitrary even integer; indeed, it was shown in work of Bhargava~\cite{thesource} that $N^{{\rm red}}(X)=o(N^{{\rm irr}}(X))$ for this representation. The situation is more complicated for the action of $\on{GL}_2 \times \on{SL}_3$ on $2 \otimes \on{Sym}_2(3)$. Bhargava proves in~\cite[Theorem~7]{MR2183288} that $N^{{\rm irr}}(X)\asymp X$, and it is well-known that
$N^{{\rm red}}(X)\asymp X\log X$. However, the reducible orbits of this representation can be partitioned into several natural subsets, and as was shown by Swaminathan in~\cite{swpreprint2} using the methods developed in the present paper, the answers to both parts of Question \ref{quest-two} are ``yes'' if we restrict to one of these subsets.

\medskip

Our next main results concern families defined by certain infinite sets of congruence conditions. We call a subset $S \subset W(\Z)$ a \emph{big family} if $S = W(\Z)^{(r)} \cap \bigcap_p S_p$, where the sets $S_p \subset W(\Z_p)$ satisfy the following properties:
\begin{enumerate}[leftmargin=2em]
    \item $S_p$ is $ \GG(\Z_p)$-invariant and is the preimage under reduction modulo $p^j$ of a nonempty subset of $W(\Z/p^j\Z)$ for some $j > 0$ for each $p$; and
    \item $S_p$ contains all ($\GG(\Z_p)$-orbits of) elements $B \in W_0(\Z_p)$ such that, for all $p \gg 1$, we have that $b_{i(n-i)}(B)$ is a $p$-adic unit for some $i$.
\end{enumerate}
Note that a big family $S$ is necessarily $\GG(\Z)$-invariant.

Our first result on reducible orbits in big families is stated in terms of a polynomial function $\lambda$ on the reducible hyperplane $W_0$, defined explicitly by
\begin{equation} \label{eq-defoflambda}
\lambda(B)\defeq\begin{cases} \displaystyle \prod_{i = 1}^{\lfloor \frac{n}{2}\rfloor} b_{i(n-i)}(B)^{2i-1},\hspace*{-4pt} & \text{if $2 \nmid n$,} \\ \displaystyle b_{\frac{n}{2}\frac{n}{2}}(B)^{\frac{n-2}{2}} \prod_{i = 1}^{\frac{n-2}{2}} b_{i(n-i)}(B)^{2i-1},\hspace*{-4pt} & \text{if $2 \mid n$.}
\end{cases}
\end{equation}
Given this definition of $\lambda$, we have the following asymptotic formula:
\begin{theorem} \label{thm-big}
Let $S$ be a big family. Then the number of reducible $\GG(\Z)$-orbits on $S$ of height up to $X$ is given by
$$C_{n,r}^{{\rm inf}} \cdot \Bigl(\prod_p\Big(1-\frac{1}{p}\Big)^{-\lfloor\frac{n}{2}\rfloor}\int_{B\in (S_p)_0}
\vert\lambda(B)\vert_pdB\Bigr)\cdot X^{\frac{n^2+n}{2}}+ o\big(X^{\frac{n^2+n}{2}}\big),
$$
where $dB$ denotes the Euclidean measure on $W_0(\Z_p)$, normalized so that $W_0(\Z_p)$ has volume $1$, and where $\vert-\vert_p$ denotes the usual $p$-adic absolute value.
 \end{theorem}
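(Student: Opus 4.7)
The plan is to count the reducible $G(\Z)$-orbits by reducing to lattice points in the reducible hyperplane $W_0$. The starting point is to show that every reducible $G(\Z)$-orbit on $W(\Z)^{(r)} \cap S$ has a representative (possibly several) in $W_0(\Z)$, and that two elements of $W_0(\Z)$ lie in the same $G(\Z)$-orbit if and only if they lie in the same orbit of $\mc{P}(\Z)$, up to a controllable finite-index discrepancy. The key geometric input is that the isotropic flag underlying reducibility can be moved by an element of $G(\Z)$ (using the unimodularity of $\mc{A}$, strong approximation, and integrality of the relevant Witt decomposition) to the standard flag spanned by $e_1, \ldots, e_{\lfloor n/2 \rfloor}$, so that $B$ lands in $W_0(\Z)$; the stabilizer of this flag is precisely $\mc{P}$, accounting for the second statement.

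Having reduced to counting $\mc{P}(\Z)$-orbits on $W_0(\Z)^{(r)} \cap S_0$ of height less than $X$, I next employ Bhargava's averaging method in conjunction with the Levi decomposition $\mc{P}(\R) = T(\R) \cdot N(\R)$, where $T$ is a maximal torus of dimension $\lfloor n/2 \rfloor$ and $N$ is the unipotent radical. The count becomes an integral over $\mc{P}(\Z) \backslash \mc{P}(\R)$ of the number of lattice points in bounded translates $g \cdot \mc{R}_X \subset W_0(\R)$ of a suitable reference region. Switching the integral and the sum, and performing the change of variables $B \mapsto g \cdot B$ to torus coordinates, produces the Jacobian factor $|\lambda(B)|$: explicitly, $\lambda(B)$ records (with appropriate multiplicities) the product of the characters through which $T$ scales the coordinates $b_{i(n-i)}$ (and $b_{(n/2)(n/2)}$ when $n$ is even), and these multiplicities match the exponents $2i-1$ and $\tfrac{n-2}{2}$ appearing in~\eqref{eq-defoflambda}. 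Integrating over the $r$-real-roots stratum of $W_0(\R)$ then yields an Archimedean main term proportional to $C_{n,r}^{{\rm inf}} \cdot X^{(n^2+n)/2}$.

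The local conditions $S_p$ are imposed via the standard $p$-adic sieve, which produces the local factors $\int_{(S_p)_0} |\lambda(B)|_p\, dB$ in the asymptotic formula. The Euler factor $(1-1/p)^{-\lfloor n/2 \rfloor}$ emerges from the normalization discrepancy between the additive Haar measure on $T(\Z_p)$ used in the Jacobian computation and the Haar measure used to parametrize the torus direction in the fundamental domain: each of the $\lfloor n/2 \rfloor$ one-dimensional split factors of $T$ contributes $(1-1/p)^{-1}$ by the usual comparison between the additive and multiplicative Haar measures on $\Z_p$. The principal obstacle, which I expect will dominate the technical work, is obtaining a genuine $o(X^{(n^2+n)/2})$ error term rather than an upper bound of the same order. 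Since $\mc{P}(\Z) \backslash \mc{P}(\R)$ is noncompact, one must carefully control the contribution of lattice points deep in the cusp (where the torus parameters are very large or very small) and show that these contributions are negligible, so that the dominant contribution is concentrated in a bounded region where geometry-of-numbers techniques yield precise asymptotics with a power-saving or sub-main-term error. Turning the averaging method from an upper bound into a true asymptotic in this cuspidal setting is precisely the innovation of this paper, as emphasized in the introduction.
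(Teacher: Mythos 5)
Your proposal shares several of the paper's ingredients---the role of the reducible hyperplane $W_0$, the modular character $\lambda$, and the $p$-adic densities---but the central step of the argument has a genuine gap, and the stated intuition for where the main term comes from is incorrect.

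The paper's proof of this theorem (Method~I, \S4) does \emph{not} average over $\mc{P}(\Z)\backslash\mc{P}(\R)$. It averages over translates of a box-shaped fundamental domain $\mc{F}$ for $\GG(\Z)$ on $\GG(\R)$ (which has \emph{finite} Haar volume), and the reducible orbits are captured by the cusp of $\mc{F}$. Your proposal instead tries to run Bhargava's averaging directly on $\mc{P}(\Z)\backslash\mc{P}(\R)$. This quotient has \emph{infinite} Haar volume: $N(\Z)\backslash N(\R)$ is compact, but $T(\Z)$ is finite while $T(\R)\cong(\R^\times)^{\lfloor n/2\rfloor}$ is unbounded. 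The averaging trick therefore does not directly apply, and this is precisely why the paper either stays with $\GG$ and slices its cusp (Method~I) or bypasses averaging altogether via the local-to-global principle Theorem~\ref{thm-stronglocglob} and a squarefree sieve (Method~II).

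Your claim that ``the dominant contribution is concentrated in a bounded region where geometry-of-numbers techniques yield precise asymptotics'' is the opposite of what is true. The whole point of the paper's slicing argument is that the main term is \emph{not} concentrated in a bounded region: after slicing the cusp over all integer tuples $b=(b_{1(n-1)},\dots,b_{\lfloor n/2\rfloor\lceil n/2\rceil})$ with $b_i\neq 0$, each slice contributes a volume $\propto X^{(n^2+n)/2}/\mc{Z}(|b|)$, and the main constant arises as the \emph{infinite} sum $\sum_b 1/\mc{Z}(|b|)$ (weighted by the local densities $\nu(S_0|_b)$), which converges to the Euler product because $\mc{Z}(b)$ grows like $\prod b_k^{2k}$. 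Slices arbitrarily deep in the cusp contribute, and the convergence of the sum is the entire content of the main-term computation. Truncating to a bounded region would lose the constant.

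Two further omissions are worth naming. First, your reduction from $\GG(\Z)$-orbits to $\mc{P}(\Z)$-orbits is stated as a clean dichotomy (``same orbit if and only if same $\mc{P}(\Z)$-orbit, up to a controllable finite-index discrepancy''), but the correct statement is weaker: elements of $W_0(\Z)$ that are $\GG(\Z)$- but not $\mc{P}(\Z)$-equivalent must have nontrivial stabilizer in $\GG(\Q)$, and Proposition~\ref{prop-bs} shows such orbits are \emph{negligible}, not finite-index. Similarly the move of a reducible $B$ into $W_0(\Z)$ is only achieved up to a negligible error term. Second, passing from finitely many to infinitely many congruence conditions in the definition of a big family requires the uniformity estimate of Theorem~\ref{thm-bsw} (an upper bound on $\mc{P}(\Z)$-orbits with $|\mc{Z}(B)|$ large) in conjunction with an inclusion-exclusion sieve; ``the standard $p$-adic sieve'' does not suffice on its own.
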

 Our second result on reducible orbits in big families, which is equivalent to Theorem~\ref{thm-big}, is stated in terms of a product of local orbit counts for the action of $\mc{P}$ on $W_0$:

\begin{theorem} \label{thm-acceptcubic}
Let $S$ be a big family. Then the number of reducible $\GG(\Z)$-orbits on $S$ of height up to $X$ is given by
\begin{equation} \label{eq-midacceptcubic}
 \Bigl(\prod_p \int_{f \in U(\Z_p) } \#\left(\frac{\on{inv}^{-1}(f)\cap (S_p)_0}{\mc{P}(\Z_p)}\right) df\Bigr)\cdot N^{(r)}(X) + o\big(X^{\frac{n^2+n}{2}}\big),
\end{equation}
 where $df$ denotes the Euclidean measure on $U(\Z_p)$, normalized so that $U(\Z_p)$ has volume $1$.
\end{theorem}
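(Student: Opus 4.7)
The plan is to show that Theorem~\ref{thm-acceptcubic} is equivalent to Theorem~\ref{thm-big} via a prime-by-prime $p$-adic Jacobian computation. Using $N^{(r)}(X) \sim \mathcal{V}^{(r)}(1) \cdot X^{(n^2+n)/2}$ together with the relation $C_{n,r}^{{\rm inf}} = \mathcal{V}^{(r)}(1)$ for $n$ odd (resp., $C_{n,r}^{{\rm inf}} = 2^{-n/2}\mathcal{V}^{(r)}(1)$ for $n$ even), matching the leading constants in the two theorems reduces to the local identity
\begin{equation*}
\Bigl(1-\tfrac{1}{p}\Bigr)^{-\lfloor n/2\rfloor}\int_{B\in(S_p)_0}|\lambda(B)|_p\,dB \;=\; c_p\cdot\int_{f\in U(\Z_p)}\#\!\left(\frac{\on{inv}^{-1}(f)\cap (S_p)_0}{\mc{P}(\Z_p)}\right)df,
\end{equation*}
where $c_p$ equals $1$ at every $p$ when $n$ is odd, and $c_p = 2^{n/2}$ at $p = 2$ and $c_p = 1$ elsewhere when $n$ is even. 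The extra factor $2^{n/2}$ at $p = 2$ in the even case accounts for the index of the image sublattice $\on{inv}(W(\Z_2))\subset U(\Z_2)$ noted in the remark following Theorem~\ref{thm-maincount2}.

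The heart of the proof is the Jacobian calculation for the $\mc{P}$-action on $W_0$. A dimension count gives $\dim\mc{P} + \dim U = \dim W_0$, and one can verify that the generic $\mc{P}$-stabilizer on $W_0$ is trivial by exhibiting an explicit point with trivial stabilizer. Consequently, the $\mc{P}$-orbits fill out the generic fibers of $\on{inv}|_{W_0}\colon W_0\to U$, and for any section $\Sigma$ of $\on{inv}|_{W_0}$ transverse to these orbits, the parametrization $\mc{P}\times\Sigma\to W_0$, $(g,B_0)\mapsto g\cdot B_0$, is a birational isomorphism. The key identity to verify is that, in block-triangular coordinates adapted to the antidiagonal $\mc{A}$ and the Levi decomposition $\mc{P} = \mc{L}\ltimes\mc{U}$, the Jacobian of this parametrization is (up to sign) precisely the polynomial $\lambda(B)$ from~(\ref{eq-defoflambda}). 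The exponent $2i-1$ on $b_{i(n-i)}$ arises as the total weight with which the $i$-th $\Gm$-factor of $\mc{L}$ scales the $W_0$-coordinates on which it acts nontrivially, and the additional factor $b_{(n/2)(n/2)}^{(n-2)/2}$ in the even case arises analogously from the Levi action on the middle diagonal entry.

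Applying this change of variables and integrating out the unipotent direction (contributing $\on{Vol}(\mc{U}(\Z_p)) = 1$) and the Levi direction (contributing $\on{Vol}(\mc{L}(\Z_p)) = (1-1/p)^{\lfloor n/2\rfloor}$) yields the local identity. The principal technical obstacle is the explicit verification of the Jacobian formula; while the final answer is a simple monomial in the anti-diagonal entries of $W_0$, the intermediate calculation requires carefully tracking how the unipotent radical $\mc{U}$ mixes the $W_0$-coordinates and how these contributions combine with the Levi weights to produce precisely $\lambda$. A secondary complication is matching normalizations at $p = 2$ when $n$ is even, where the structure of $\mc{P}$ at $p = 2$ and the index-$2^{n/2}$ image of $\on{inv}$ must be combined to produce the constant $c_2 = 2^{n/2}$.
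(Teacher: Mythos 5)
Your plan---deduce the orbit-count formula from Theorem~\ref{thm-big} by applying a $p$-adic Jacobian change-of-variables prime by prime, then match constants using $N^{(r)}(X)\sim\mathcal{V}^{(r)}(1)X^{(n^2+n)/2}$---is exactly the paper's Method~I route, which the paper encapsulates in Propositions~\ref{prop-jac} and~\ref{prop-jacobian} together with the closing remark of \S\ref{sec-congrest}. Two corrections to the sketch, though.

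First, you have the Jacobian backwards. Proposition~\ref{prop-jac} says $|\lambda(B)|\,dB = |\mc{J}|\,dh\,df$, so the Jacobian of the parametrization $(h,f)\mapsto h\cdot\sigma(f)$ is proportional to $1/\lambda(B)$, not to $\lambda(B)$ (equivalently, $\lambda$ is the Jacobian of the \emph{inverse} map, up to the constant $\mc{J}$). Second, and more substantively, you present the Jacobian verification as a routine block-triangular computation to be done in Levi-unipotent coordinates, but the actual crux---which the paper's proof of Proposition~\ref{prop-jac} isolates as its second step---is that $\mc{P}$ is \emph{not unimodular}. The change-of-variables only closes because the character $\chi_\lambda$ by which $\lambda$ transforms under the left action of $\gamma=su\in\mc{P}(\R)$ exactly equals the modulus $\rho(\gamma)=\delta(s)^{-1}$ of the right Haar measure $dh=\delta(s)^{-1}\,du\,d^\times s$; without the identity $\chi_\lambda=\rho$ one cannot conclude that the Jacobian $\mc{J}_\sigma(h,f)$ is independent of $h$, and a naive ``unipotent volume times Levi volume'' split leaves an uncontrolled $\delta(s)$-dependence. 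Your local calculation happens to come out right because $|\delta(s)|_p=1$ for $s\in T(\Z_p)$, but the constancy of the Jacobian---which you are implicitly invoking to even write $\lambda$ as a single monomial---depends on $\chi_\lambda=\rho$. Finally, worth noting that the paper also gives a second, logically independent proof of Theorem~\ref{thm-acceptcubic} in \S\ref{sec-sieve} (Method~II), via the local-to-global principle of Theorem~\ref{thm-stronglocglob}, the squarefree sieve, and the tail bound of Theorem~\ref{thm-bsw}; that route does not presuppose Theorem~\ref{thm-big}, and it is the one the paper presents as the official proof of this statement.
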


The local product in \eqref{eq-midacceptcubic} looks similar to many other mass formulas in related works with one major difference: the group in question, $\mc{P}$, is not reductive. To prove Theorem \ref{thm-acceptcubic}, it is therefore necessary for us to get some control over orbits of this non-reductive group, \mbox{which we do in, e.g., \S\ref{sec-sieve}.}

\subsection{Methods of proof} \label{sec-whatsnew}

To prove our main results, we introduce two new methods of determining asymptotics for reducible orbits, and we describe them both as follows.

\subsubsection*{Method I} Our first method proceeds by directly counting points in the cusp(s) of an ``averaged'' fundamental domain $\mc{D}$ for the action of $\GG(\Z)$ on $W(\R)$. This suffices to get the count of reducible orbits because, by the results in \cite{MR3156850} and \cite{MR3719247}, Properties (1) and (2) in Section~\ref{sec-genapp} are satisfied for the action of $\GG$ on $W$.
This method requires us to construct fundamental domains $\mc{D}$ for the action of $\GG(\Z)$ on $W(\R)$, which in turn requires us to construct a fundamental domain $\mc{F}$ \mbox{for the action of $\GG(\Z)$ on $\GG(\R)$.} Unlike in most previous situations, it is simply not enough for us to invoke the work of Borel and Harish-Chandra (see~\cite{MR0148666,MR147566}), who constructed fundamental domains for general semisimple groups. Indeed, our argument relies on $\mc{F}$ being \emph{box-shaped at infinity}, meaning that $\mc{F}$ looks like a Siegel domain in a neighborhood of the cusp. We prove the existence of such fundamental domains for our groups~$\GG$.

The region $\mc{D}$ is too skewed for a direct geometry-of-numbers argument to give anything better than an upper bound for the number of points it contains. To resolve this issue, we cut up the region $\mc{D}$ into a countable collection of nicer-looking slices. Within each slice, we prove that the count of the integral points is asymptotic to the volume of the slice. Summing up over all slices yields the desired total asymptotic. Our slicing method constitutes the first higher-dimensional generalization of an argument developed in~\cite{MR3090184}, which treated the simpler case of the cusps arising from the group $\on{GL}_2$.

Summing up the volumes of the slices gives us the desired asymptotic in terms of weighted volumes of certain sets in the reducible hyperplane $W_0(\R) \subset W(\R)$, where the volumes are computed with respect to the weight $\lambda$ defined in~\eqref{eq-defoflambda}.
We evaluate these weighted volumes by proving a Jacobian change-of-variables formula that transforms the measure $\lambda$ on $W_0$ into the product of the Euclidean measure on $U(\R)$ with the Haar measure on the lower-triangular subgroup $\mc{P}(\R) \subset \GG(\R)$. Such change-of-variable results have previously been proven when the group under consideration is unimodular (see, e.g.,~\cite[Section 3.4]{MR3272925}), but the fact that the group $\mc{P}$ fails to be unimodular presents significant new challenges.

Method I, as applied to the representation of $G$ on $W$, requires very complicated indexing and notation. To make Method I more readily comprehensible for the reader, we begin in \S\ref{sec-binaryquartics} by illustrating the method in the case of $\on{PGL}_2$ acting on $\on{Sym}_4(2)$, before proceeding with the parallel, but much more complicated, case of $G$ acting on $W$ starting in \S \ref{sec-cubicredux}. We note that \S\ref{sec-binaryquartics} can be read more or less independently of the rest of the paper.

\subsubsection*{Method II} Our second method proceeds by means of the following four steps. First, we claim that the asymptotics for the number of reducible $\GG(\Z)$-orbits on $W(\Z)$ are the same as the asymptotics for the number of $\mc{P}(\Z)$-orbits on $W_0(\Z)$. This claim is an immediate corollary \mbox{of the following two facts:}
\begin{enumerate}[leftmargin=20pt]
\item[(a)] If $B_1, B_2 \in W_0(\Z)$ are equivalent under $\GG(\Z)$ but not under $\mc{P}(\Z)$, then $B_1$ and $B_2$ have nontrivial stabilizer in $\GG(\Q)$, but as we establish in Proposition~\ref{prop-bs}, all but negligibly many $\GG(\Z)$-equivalence classes on $W_0(\Z)$ have trivial stabilizer in $\GG(\Q)$.
\item[(b)] As shown in \cite[Proposition~10.7]{MR3156850} and \cite[Proposition~23]{MR3719247}, all but negligibly many reducible $\GG(\Z)$-orbits on $W(\Z)$ have a representative lying on the reducible hyperplane $W_0(\Z)$.
\end{enumerate}

The second step is to develop the reduction theory for the action of $\mc{P}$ on $W_0$. Specifically, we prove that for any field $K$, the group $\mc{P}(K)$ acts \emph{simply transitively} on the set of elements in $W_0(K)$ having any fixed nondegenerate invariant polynomial. Using the fact that the group $\mc{P}$ has class number $1$ over $\Q$, we then deduce that the orbits of $\mc{P}(\Z)$ on $W_0(\Z)$ satisfy the following strong local-to-global principle:
\begin{theorem} \label{thm-stronglocglob}
Let $f \in U(\Z)$ be a nondegenerate monic degree-$n$ polynomial, and when $n$ is even, suppose that the $x^i$-coefficient of $f$ is divisible by $2$ for each odd $i$. For each prime $p$, choose $B_p \in W_0(\Z_p)$ such that $\on{inv}(B) = f$. Then there exists $B \in W_0(\Z)$, unique up to the action of $\mc{P}(\Z)$, such that $B$ is $\mc{P}(\Z_p)$-equivalent to $B_p$ for each prime $p$.
\end{theorem}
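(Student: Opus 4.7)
The strategy is a standard ``$\mc{P}$-torsor plus class-number-one'' patching. Given the simple-transitivity result established in the preceding paragraph and the hypothesis that local points exist at every $p$, the variety $Z_f \defeq W_0 \cap \on{inv}^{-1}(f)$ is a non-empty $\mc{P}$-torsor over $\Q$. I will (i) produce a $\Q$-rational point of $Z_f$ via the vanishing of $H^1(\Q, \mc{P})$; (ii) use class-number-one for $\mc{P}$ over $\Q$ to replace this rational point with one that is $\mc{P}(\Z_p)$-equivalent to $B_p$ for every $p$, which will then automatically land in $W_0(\Z)$; and (iii) deduce uniqueness by running the patching in reverse. The only non-formal input beyond the reduction theory already in hand is the class-number-one statement, which is where the main work of the proof should be concentrated.

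\emph{Existence.} The lower-triangular Borel $\mc{P}$ sits in an exact sequence $1 \to \mc{U} \to \mc{P} \to \mc{T} \to 1$, where $\mc{T}$ is a split torus (a product of copies of $\Gm$) and $\mc{U}$ is a successive extension of copies of $\G_a$. Hilbert~90 gives $H^1(\Q, \mc{T}) = 0$, and the vanishing of additive Galois cohomology gives $H^1(\Q, \mc{U}) = 0$; the associated long exact sequence of non-abelian cohomology therefore yields $H^1(\Q, \mc{P}) = 0$, so $Z_f$ is trivial as a $\mc{P}$-torsor over $\Q$ and admits a rational point $B^{\Q} \in W_0(\Q)$ with $\on{inv}(B^{\Q}) = f$. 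For each prime $p$, simple transitivity of $\mc{P}(\Q_p)$ on $Z_f(\Q_p)$ produces a unique $g_p \in \mc{P}(\Q_p)$ with $g_p \cdot B^{\Q} = B_p$; since $B^{\Q}$ and the $B_p$ are $p$-integral at almost every $p$, we have $g_p \in \mc{P}(\Z_p)$ for almost all $p$, and so $(g_p)_p$ lies in $\mc{P}(\A_f)$. Class-number-one for $\mc{P}$ then factors $g_p = k_p \gamma$ with $(k_p)_p \in \prod_p \mc{P}(\Z_p)$ and $\gamma \in \mc{P}(\Q)$. Setting $B \defeq \gamma \cdot B^{\Q}$, the identity $k_p \cdot B = k_p \gamma \cdot B^{\Q} = g_p \cdot B^{\Q} = B_p$ shows that $B$ is $\mc{P}(\Z_p)$-equivalent to $B_p$ at every $p$ and that $B = k_p^{-1} \cdot B_p \in W_0(\Z_p)$; combining this with $B \in W_0(\Q)$ yields $B \in W_0(\Z)$ as required.

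\emph{Uniqueness and main obstacle.} If $B, B' \in W_0(\Z)$ both satisfy the conclusion, simple transitivity over $\Q$ produces a unique $\gamma \in \mc{P}(\Q)$ with $\gamma \cdot B = B'$. Choosing $k_p, k_p' \in \mc{P}(\Z_p)$ with $k_p \cdot B = B_p = k_p' \cdot B'$, the equality $k_p \cdot B = k_p' \gamma \cdot B$ combined with simple transitivity of $\mc{P}(\Q_p)$ forces $\gamma = (k_p')^{-1} k_p \in \mc{P}(\Z_p)$ for every $p$. Therefore $\gamma \in \mc{P}(\Q) \cap \prod_p \mc{P}(\Z_p) = \mc{P}(\Z)$, completing the uniqueness. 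The crux of the argument is the class-number-one input, which via $\mc{P} = \mc{T} \ltimes \mc{U}$ reduces to $\Q$ having class number one (for the split torus $\mc{T}$) together with strong approximation for the unipotent radical $\mc{U}$; pinning down the precise structure of $\mc{P}$ inside $\GG$ so that these classical facts can be applied is what I expect to be the main obstacle, while the rest of the argument is essentially formal.
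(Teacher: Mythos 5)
Your proof is correct, and it takes essentially the same patching route via class-number-one for $\mc{P}$, but organized in a way that is in fact slightly more economical than the paper's. The paper first proves an abstract local-to-global theorem (their Theorem~\ref{thm-genlocglob}) whose hypothesis requires a global \emph{integral} representative $v_0 \in V(\Z)$; when they then specialize to $\mc{P} \curvearrowright W_0$, producing such a $B_0 \in W_0(\Z)$ becomes a separate nontrivial step — routine for even $n$ via the integral section $\sigma_0$, but for odd $n$ requiring a citation to an external construction together with the classification of unimodular symmetric bilinear forms. Your argument sidesteps this entirely: you start from a global \emph{rational} point $B^{\Q} \in W_0(\Q)$ (always available, whether via your $H^1(\Q,\mc{P})=0$ argument or directly from $\sigma_0$, which is defined over $\Q[\tfrac12] \subset \Q$), and the adelic patching against the given local integral points $B_p$ automatically lands you in $W_0(\Z)$. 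In effect you are proving the existence of a global integral representative as a byproduct rather than as an input. Two small remarks: (i) the $H^1(\Q,\mc{P})=0$ step is redundant, since to even know $Z_f$ is a nonempty $\mc{P}$-torsor over $\Q$ you need a $\ol\Q$-point, and the only source of that is the explicit section $\sigma_0$, which already gives you a $\Q$-point; and (ii) when you assert $g_p \in \mc{P}(\Z_p)$ for almost all $p$, you should explicitly invoke Proposition~\ref{lem-cantranslate} (single $\mc{P}(\Z_p)$-orbit when $\disc f$ is a unit) — integrality of $B^{\Q}$ and $B_p$ alone does not force $g_p$ integral without this transitivity. Both points are presentational, not gaps.
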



    In fact, we prove a more general version of Theorem \ref{thm-stronglocglob}, namely Theorem \ref{thm-genlocglob}, applying to representations of groups having class number $1$ over $\Q$.

    By Theorem~\ref{thm-stronglocglob}, the number of $\mc{P}(\Z)$-orbits on $W_0(\Z)$ with nondegenerate invariant polynomial $f$ is simply the product over all primes $p$ of the number of $\mc{P}(\Z_p)$-orbits on $W_0(\Z_p)$ lying above $f$. The third step is to sum this local product formula over all invariant polynomials of bounded height. A key ingredient for evaluating the sum is to verify that there are not too many orbits with large value of $\lambda$. This was verified in~\cite{sqfrval}, where an upper bound of roughly the correct order of magnitude was obtained for the number of $\mc{P}(\Z)$-orbits on $W_0(\Z)$ with large value of $\lambda$.\footnote{An even stronger upper bound for the number of orbits with large value of $\lambda$ can be obtained using Method I, by simply summing over those slices with large value of $\lambda$; \mbox{see Theorem~\ref{thm-bsw} (to follow).}} We thus arrive at the surprising conclusion that, at least for the representation of $\GG$ on $W$, an upper bound on the number of reducible orbits can be indirectly used to deduce a precise asymptotic!

The resulting asymptotic for the total count of reducible orbits is now expressed in terms of a product of local orbit counts, as in Theorem~\ref{thm-acceptcubic}. The fourth and final step is to evaluate this product. We do this by using the previously mentioned Jacobian change-of-variables formula in reverse! This expresses the local orbit count at a prime $p$ as a certain $p$-adic integral, thus yielding Theorem~\ref{thm-big}; evaluating each of these integrals and multiplying them all together yields the total asymptotic in Theorem~\ref{thm-maincount2}.

\begin{remark}
For the coregular representations considered in this paper, both of the above methods are sufficient to obtain asymptotics for the number of reducible elements. Method I gives a direct proof of Theorem \ref{thm-big}, while Method II gives a direct proof of Theorem \ref{thm-acceptcubic}. Moreover, the main terms in these two theorems can be related to each other using a Jacobian change-of-variables. However, for certain representations such as those considered in \cite{swpreprint2} and \cite{ollerpreprint}, Method II can be directly applied while applying Method I seems more complicated. Moreover, for certain representations such as those considered in \cite{D4preprint}, Method II is inapplicable, while Method I can be used.
\end{remark}

\subsection{Other applications of our methods} \label{sec-open}

We expect that both of the methods that we introduce in this paper can be used to count reducible orbits for other representations.

Our methods have already been used to derive arithmetic applications beyond the results of this paper. Swaminathan~\cite{swpreprint2} counts reducible orbits for the action of $\on{SL}_n$ on $2 \otimes \on{Sym}^2(n)$, where $n$ is odd, and uses this to prove asymptotics for counts of $2$-torsion in the ideal class groups of cubic orders in the case $n = 3$. (A similar application could be pursued for counting quintic rings, which also correspond to integral orbits of a coregular representation via a parametrization of Bhargava~\cite{MR2373152}.) Also, Oller \cite{ollerpreprint} counts reducible orbits in all the Vinberg representations in Thorne's thesis~\cite{MR3054927}, and also carries out squarefree sieves in all these cases.

One possible line of inquiry is to study the action of $\GL_2$ on the space of binary $n$-ic forms for $n\geq 5$. We carry out the case $n=4$ in \S\ref{sec-binaryquartics}; the $n=3$ case is similar and would give a simpler proof of the results of Shintani and Bhargava--Varma mentioned previously. The spaces are not coregular for $n\geq 5$, but when integral orbits are ordered by Julia invariant, Bhargava and Yang~\cite{BYpreprint} determined asymptotics for the number of irreducible orbits. We expect the methods introduced in this paper to have applications towards counting the reducible orbits, ordered by Julia invariant, for these spaces. Other arithmetically interesting families of coregular representations for which the question of counting reducible orbits remains open may be found in the thesis of Ho~\cite{MR2713823}.


\section{Counting reducible integral orbits on binary quartic forms} \label{sec-binaryquartics}

In this section, we determine asymptotics for the number of reducible orbits of bounded height for the action of $\PGL_2$ on the space of integral binary quartic forms. Our purpose is to illustrate Method I (see \S\ref{sec-whatsnew}) in the context of a low-dimensional example, with the view of making the higher-dimensional application treated in \S\S\ref{sec-cubicredux}--\ref{sec-locglob} more readily comprehensible. 

This section can be read more or less independently of the rest of the paper. We remark that some of the notation used within this section is recycled in subsequent sections to denote different but analogous objects. As none of the objects introduced in this section are used in subsequent sections, we do not expect this to cause ambiguity.

\medskip

\noindent\emph{Setup}. Let $V$ denote the affine $\Z$-scheme whose $R$-points consist of binary quartic forms with coefficients in $R$; i.e., we have $$V(R)\defeq \bigl\{f(x,y)=ax^4+bx^3y+cx^2y^2+dxy^3+ey^4:a,b,c,d,e\in R\bigr\}.$$
The group $\PGL_2$ acts on $V$ via
$(g\cdot f)(x,y) \defeq (\det g)^{-2} \times f((x,y)\cdot g)$. The ring of invariants for the action of $\PGL_2$ on $V$ is freely generated by two invariants, denoted $I$ and $J$. For the form $f(x,y)$ written as above, these invariants are given explicitly by
\begin{equation*}
I(f)=12ae-3bd+c^2;
\quad
J(f)=72ace+9bcd-27ad^2-27eb^2-2c^3.
\end{equation*}
For convenience, we define $\on{inv} \colon V \to \mathbb{A}^2$ to be the map that sends $f \mapsto (I(f), J(f))$. The image of this map over $\Z$ is not all of $\mathbb{A}^2(\Z)$, but is defined by congruence conditions modulo $27$; see~\cite[Theorem~1.7]{MR3272925}.

We define a $\PGL_2(\R)$-invariant \emph{height} $H$ on $V(\R)$ via $H(f)\defeq\max\{|I(f)|^3,J(f)^2/4\}$. We say that a binary quartic form $f$ is \emph{nondegenerate} if its \emph{discriminant} $\Delta(f) \defeq (4I(f)^3 - J(f)^2)/27$ is nonzero. More generally, we say that a pair $(I,J)$ is nondegenerate if the quantity $\Delta(I,J) \defeq (4I^3-J^2)/27$ is nonzero (so a binary quartic form is nondegenerate if and only if its invariants are nondegenerate). For $i\in\{0,1,2\}$ and $R = \Z$ or $\R$, we let $V(R)^{(i)}$ denote the set of nondegenerate elements in $V(R)$ having $i$ pairs of complex conjugate roots and $4-2i$ real roots in $\P^1(\C)$. 

A nondegenerate binary quartic form $f \in V(R)$ is said to be \emph{reducible} if it factors over $R$. 
It follows from \cite[Lemma 2.3]{MR3272925} that the number of reducible orbits on $V(\Z)$ having height bounded by $X$ and factoring into the product of two irreducible quadratic forms, is $O(X^{2/3+\epsilon})$, and thus negligible. That is, $100\%$ of reducible orbits have at least one rational (and thus at least one real) linear factor. Therefore, for the purposes of this section, it suffices to restrict our attention to counting orbits of binary quartic forms that possess a rational linear factor, which amounts to counting reducible integral orbits in the sets $V(\R)^{(0)}$ and $V(\R)^{(1)}$. 

Let $V_0(R)\subset V(R)$ be the ``reducible hyperplace'' consisting of those forms $f$ with $a=0$ (i.e., those forms $f$ that are divisible by $y$). The reducible hyperplane is sent to itself under the action of the lower-triangular subgroup $P \subset \PGL_2$,\footnote{More precisely, for a ring $R$, we define $P(R)$ to be the image of the subgroup of lower triangular matrices in $\GL_2(R)$ under the map $\GL_2(R)\to\PGL_2(R)$.} and so we obtain a well-defined representation of $P$ on $V_0$. In what follows, for a $\Z$-algebra $R$ and any subset $S \subset V(R)$, we sometimes write $S_0 \subset S$ to denote the subset $S \cap V_0(R)$.

\medskip

\noindent\emph{Main results}. Given the above setup, the main result of this section is as follows, by analogy with Theorem~\ref{thm-maincount2}:

\begin{theorem} \label{thm-quarticmaincount}
The number of reducible $\PGL_2(\Z)$-orbits on $V(\Z)^{(i)}$ having height less than $X$ is
\begin{equation*}
\zeta(2)\cdot \big(\tfrac{8+24i}{135} \cdot X^{\frac{5}{6}}\big)+O_\epsilon\big(X^{\frac{3}{4}+\epsilon}\big).
\end{equation*}
\end{theorem}
\begin{remark}
    The factor $\frac{8+24i}{135} \cdot X^{\frac{5}{6}}$ occurring in Theorem~\ref{thm-quarticmaincount} is an asymptotic for the number of pairs $(I,J)$ that arise as invariants of orbits of height up to $X$ (see~\cite[Proposition~2.10]{MR3272925}). The factor of $\frac{8+24i}{135}$ comprises two parts: the first is a factor of $\frac{8+24i}{5}$, which is the volume of the space of invariants of height at most $1$ in $V(\R)^{(i)}$, and the second is a factor of $\frac{1}{27}$, which occurs because not every pair $(I,J) \in R^2$ arises as the set of invariants of a binary quartic form in $V(R)$.
\end{remark}

Next, we consider subsets of $V(\Z)$ cut out by certain (possibly) infinite sets of congruence conditions. We call $S\subset V(\Z)$ a {\it big family} if $S=V(\Z)^{(i)}\cap\bigcap_p S_p$, for $i\in\{0,1\}$, where the sets $S_p\subset V(\Z_p)$ satisfy the following properties:
\begin{enumerate}[leftmargin=2em]
    \item $S_p$ is $ \PGL_2(\Z_p)$-invariant and is the preimage under reduction modulo $p^j$ of a nonempty subset of $V(\Z/p^j\Z)$ for some $j > 0$ for each $p$; and
    \item For all $p\gg 1$, the set $S_p$ contains (all $\PGL_2(\Z_p)$-orbits of) all elements $f(x,y) \in V(\Z_p)$ such that $a(f)=0$ and $b(f)$ is a $p$-adic unit, where $a(f)$ and $b(f)$ denote the $x^4$- and $x^3y$-coefficients of $f(x,y)$, respectively.
\end{enumerate}
We then have the following result, by analogy with Theorem~\ref{thm-big}:
\begin{theorem} \label{thm-bigquartic}
Let $S\subset V(\Z)^{(i)}$ be a big family. Then the number of reducible $\PGL_2(\Z)$-orbits on $S$ of height less than $X$ is
\begin{equation*} 
\big(\tfrac{8+24i}{135} \cdot X^{\frac{5}{6}}\big) \cdot \prod_p (1-p^{-1})^{-1}\int_{f \in (S_p)_0} |b(f)|_p df + O_\epsilon(X^{\frac{3}{4}+\epsilon}).
\end{equation*}
\end{theorem}
Finally, by analogy with Theorem~\ref{thm-acceptcubic}, we have the following result, which is equivalent to Theorem \ref{thm-bigquartic}:
\begin{theorem}\label{thm-needplocalfactsl21}
Let $S\subset V(\Z)^{(i)}$ be a big family. Then the number of reducible $\PGL_2(\Z)$-orbits on $S$ of height less than $X$ is
\begin{equation*}
 \bigg(\prod_p \int_{(I,J) \in \Z_p^2} \#\bigg(\frac{\on{inv}^{-1}(I,J) \cap (S_p)_0}{P(\Z_p)}\bigg) dI dJ \bigg) \cdot (\tfrac{8+24i}{5} \cdot X^{\frac{5}{6}}) +O_\epsilon\big(X^{\frac{3}{4}+\epsilon}\big).
\end{equation*}
\end{theorem}

\subsection{Reduction theory for the action of $\PGL_2(\Z)$ on $V(\R)$} \label{sec-sl2redux}

To count orbits of $\on{PGL}_2(\Z)$ on $V(\Z)$, we realize these orbits as lattice points in fundamental sets for the action of \mbox{$\on{PGL}_2(\Z)$ on $V(\R)$.} In this section, we construct such fundamental sets by means of a two-step process: first, in \S\ref{sec-pgl}, we describe a fundamental domain $\mc{F}$ for the action of $\on{PGL}_2(\Z)$ on $\on{PGL}_2(\R)$; subsequently, in \S\ref{sec-set}, we combine $\mc{F}$ with fundamental sets for the action of $\on{PGL}_2(\R)$ on $V(\R)$.

\subsubsection{A box-shaped fundamental domain for $\on{PGL}_2(\Z) \curvearrowright \on{PGL}_2(\R)$} \label{sec-pgl}

We start by recalling Gauss' fundamental domain for the action of $\on{PGL}_2(\Z)$ on $\on{PGL}_2(\R)$, rephrased in terms of the Iwasawa decomposition of $\on{PGL}_2(\R)$, which we now recall. 
 Let $N$ be the (algebraic) subgroup of $\on{PGL}_2$ consisting of lower triangular unipotent matrices $\left[\begin{smallmatrix} 1 & 0 \\ u & 1 \end{smallmatrix}\right]$, let $T$ be the maximal torus defined by \mbox{$T = \big\{ \left[\begin{smallmatrix} t^{-1} & 0 \\ 0 & t \end{smallmatrix}\right]: t \in \R_{> 0}\big\}$,} and let $K=\mathrm{SO}_2(\R)/\{\pm \on{id}\}$. We often abuse notation by writing $u$ and $t$ for the corresponding elements of $N(\R)$ and $A$. If we let $T' = \big\{t \in T : t \geq \frac{\sqrt[4]{3}}{\sqrt{2}}\big\}$, then for each $t \in T'$, there exists a compact subset $N'(t) \subset [-\frac{1}{2}, \frac{1}{2}]$ such that the set
 \begin{equation} \label{eq-thissethastwo}
 \{ut : u \in N'(t), t \in T'\} \cdot K
 \end{equation}
 is a fundamental domain $\mc{F}$ for the action of $\on{PGL}_2(\Z)$ on $\on{PGL}_2(\R)$. It is well-known that $N'(t) = [-\frac{1}{2},\frac{1}{2}]$ for all $t\geq 1$; consequently, we say that this fundamental domain is \emph{box-shaped at infinity}. This property of the fundamental domain is essential for our proof.

 We denote elements of $K$ by $\theta$. With respect to the coordinates $u$ on $N(\R)$, $t$ on $T$, and $\theta$ on $K$, the Haar measure $dg$ on $\on{PGL}_2(\R)$ is given by
 \begin{equation} \label{eq-haarpgl2}
     dg = d\theta du(t^{-2}d^\times t),
 \end{equation}
 where $d^\times t = dt/t$. Above, $d\theta$ is normalized so that $\int_{\theta \in K}d\theta = 1$, and $du$ is normalized so that $N(\Z)$ has covolume $1$ in $N(\R)$.

\subsubsection{Fundamental sets for $\on{PGL}_2(\R) \curvearrowright V(\R)$ and $\on{PGL}_2(\Z) \curvearrowright V(\R)$} \label{sec-set}

The action of $\on{PGL}_2(\R)$ on $V(\R)^{(0)}\sqcup V(\R)^{(1)}$ has one orbit per set of {\it nondegenerate invariants} --- i.e., a pair of invariants $(I,J)\in\R^2$ such that $\Delta(I,J)\defeq4I^3-J^2\neq 0$. This orbit belongs to $V(\R)^{(0)}$ when $\Delta(I,J)>0$ and to $V(\R)^{(1)}$ when $\Delta(I,J)<0$.
Consider the function $\sigma_0$ given by
\begin{equation}\label{eq:sigmabqf}
\begin{array}{rcl}
\sigma_0\colon \R^2\smallsetminus\{\Delta=0\}&\to& V(\R)
\\[.1in]
(I,J)&\mapsto&x^3y-\frac{I}3xy^3-\frac{J}{27}y^4.
\end{array}
\end{equation}
It is easy to check that $\sigma_0$ is a section of the map $\on{inv}$, meaning that the invariants of $\sigma_0(I,J)$ are $I$ and $J$. 
For $i\in\{0,1\}$, we take our fundamental sets for the action of $\PGL_2(\R)$ on $V(\R)$ to be
\begin{equation*}
\cR^{(i)}\defeq\R_{>0}\cdot \bigl\{\sigma_0(I,J):
(-1)^i\Delta(I,J)>0,\,H(I,J)=1
\bigr\}.
\end{equation*}
Finally, we note that the stabilizer $\on{Stab}_{\on{PGL}_2(\R)}(F)$ is independent of the choice of $f \in V(\R)^{(i)}$; letting $n_i \defeq \#\on{Stab}_{\on{PGL}_2(\R)}(f)$ for any $f \in V(\R)^{(i)}$, one readily verifies that $n_0 = 4$ and $n_{1} = 2$.

We conclude that the multiset $\mc{F} \cdot \mc{R}^{(i)}$ is a cover for a fundamental domain for the action of $\on{PGL}_2(\Z)$ on $V(\R)^{(i)}$. More precisely, every $\PGL_2(\Z)$-orbit of $f \in V(\R)^{(i)}$ is represented exactly $n_i/\#\on{Stab}_{\PGL_2(\Z)}(f)$ times in $\mc{F} \cdot \mc{R}^{(i)}$.

\subsection{The action of the subgroup $P$ on the reducible hyperplane $V_0$}

In this section, we examine the action of the lower-triangular (parabolic) subgroup $P$ on the reducible hyperplane $V_0$. Specifically, in \S\ref{sec-pv04}, we show that over many interesting base rings $R$, the action of $P(R)$ on the set of forms in $V_0(R)$ lying over a given nondegenerate pair of invariants is simply transitive. Then, in \S\ref{sec-jack4}, we prove a Jacobian change-of-variables formula relating the Euclidean measure on $V_0(R)$ to the product of the Haar measure on $P(R)$ with the Euclidean measure on $R^2$. This formula will be applied in \S\S\ref{sec-compconst4}--\ref{sec-sl2conditions}.

\subsubsection{Reduction theory for the action of $P$ on $V_0$} \label{sec-pv04}

Let $R$ be a field or $\Z_p$ for some prime $p$. Then we have the following result, which classifies the orbits and stabilizers of $P(R)$ on $V_0(R)$:

\begin{prop} \label{prop-cantranslate4}
Let $R$ be as above, and let $(I,J)\in R^2$ be such that $\Delta(I,J)$ is a unit. Then the set of binary quartic forms in $V_0(R)$ with invariants $I$ and $J$ is either empty or consists of a single $P(R)$-orbit, and the stabilizer of any element in this orbit is trivial.
\end{prop}
\begin{proof}
Given a form $f(x,y)=bx^3y+cx^2y^2+dxy^3+ey^4\in V_0(R)$ having invariants $I$ and $J$, we first note that $b^2\mid\Delta(f)=\Delta(I,J)$. Thus, if $\Delta(I,J)$ is a unit, then so is $b$. As a consequence, by replacing $f(x,y)$ with a $P(R)$-translate, we can arrange that $b=1$. When $R\neq\Z_3$, we have that $3 \in R^\times$, and hence by replacing $f$ with the $P(R)$-translate $f(x-c/3y,y)$, we may assume that $c=0$. When $R=\Z_3$, we may similarly replace $f$ with a $P(R)$-translate to arrange that $c\in\{0,1,2\}$ (depending on the residue classes of $I$ modulo $9$ and $J$ modulo $27$). Once this has been done, the values of $d$ and $e$ are respectively determined by $I$ and $J$ (since $a=0$ implies linear relations between $(I,J)$ and $(d,e)$). This proves that the set of elements in $V_0(R)$ with invariants $I$ and $J$ (if nonempty) form a single $P(R)$-orbit.

Next, we prove the claim regarding the stabilizer in $P(R)$ of $f\in V_0(R)$. Suppose that an element $g\in P(R)$, represented by a matrix with coefficients $1$ and $u$ on the diagonal and $n$ in the lower left coordinate, fixes $f$. First note that since $b$ is a unit, the $x^2y^2$-coefficient of $g\cdot f(x,y)$ will change unless $n=0$. Assume thus that $n=0$. Next note that the $x^3y$-coefficient of $g\cdot f(x,y)$ is $u^{-1}b$, implying that $u=1$, as needed.
\end{proof}

The above result has the following immediate consequence by specializing to the case $R = \R$.

\begin{lemma} \label{lem-cantranslate44}
    Let $(I,J) \in \R^2$ be nondegenerate. Then the set $\{f \in \on{inv}^{-1}(f)_0 : b(f) > 0\}$ consists of a single $N(\R)T$-orbit.
\end{lemma}

\subsubsection{A Jacobian change-of-variables formula} \label{sec-jack4}

Proposition~\ref{prop-cantranslate4} implies that when $R = \R$ or $\Z_p$ for a prime $p$, the space $V_0(R)$ is a fibration over $R^2$, where the generic fiber can be idenitfied with $P(R)$, so long as it is nonempty. Thus, the Euclidean measure on $V_0(R)$ should be related to the product of the Haar measure on $P(R)$ with the Euclidean measure on $R^2$. The following proposition gives a Jacobian change-of-variables formula relating these measures:
\begin{prop} \label{prop-jac4}
Let $R=\R$ or $\Z_p$ for some prime $p$, and let $\phi\colon V_0(R)\to\R$ be a measurable function. Then we have
\begin{equation*}
\int_{f\in V_0(R)}\phi(f){\vert}b(f){\vert}df=
\frac{2}{27}\int_{\substack{(I,J)\in R^2\\ \Delta(I,J)\neq 0}}
\Bigl(\sum_{f\in\frac{\inv^{-1}(I,J)_0}{P(R)}}\int_{h\in P(R)} \phi(h \cdot f)dh\Bigr)dIdJ.
\end{equation*}
where $df$, $dI$, and $dJ$ are Euclidean measures, where $dh$ is the right Haar measure on $P$ given by $dh = tdudt$, and where $|-|$ denotes the usual absolute value on $R$.
\end{prop}
\begin{proof}
First note that the result for $R=\R$ implies the result for $R=\Z_p$ by the principle of permanence of identities, so it suffices to treat the case $R = \R$.  
Recall the construction of the section $\sigma_0$ in \eqref{eq:sigmabqf}. Proposition~\ref{prop-cantranslate4} implies that we have 
\begin{equation*}
V_0(\R)=P(\R)\cdot\sigma_0(\{(I,J)\in\R^2:\Delta(I,J)\neq 0\}).
\end{equation*}
Thus, the theorem follows from the equality
\begin{equation}\label{eq-jac1bqf}
\int_{f \in P(\R) \cdot \sigma_0(\R^2\smallsetminus\{\Delta=0\})} \phi(f){\vert}b(f){\vert} df  = \frac{2}{27}\int_{(I,J) \in \R^2} \int_{h\in P(\R)} \phi(h \cdot \sigma_0(I,J)) dhdIdJ,
\end{equation}
which in turn is a consequence of the following Jacobian change-of-variables computation. First note that a typical element of the region of integration on the left-hand side of~\eqref{eq-jac1bqf} is given by
\begin{equation*}
(tu) \cdot \sigma_0(I,J)=
\frac{1}{t^2}x^3y+3u x^2y^2+\Bigl(3u^2t^2-\frac{It^2}{3}\Bigr)xy^3+\Bigl(
u^3t^4-\frac{Iut^4}{3}-\frac{Jt^4}{27}
\Bigr)y^4.
\end{equation*}
By taking partial derivatives with respect to $t$, $u$, $I$, and $J$ of the coefficients of the binary quartic form on the right-hand side above and arranging these partials into matrix form, we find that the Jacobian determinant relating the measures $df$ and $dtdudIdJ$ is given as follows:
\begin{equation*}
\displaystyle 
\begin{vmatrix}
-2t^{-3}& 0 &
6u^2t-2It/3&
4u^3t^3-4Iut^3/3-4Jt^3/27 \\
&3&6ut^2 &3u^2t^4-It^4/3\\
&&-t^2/3&-ut^4/3\\
&&&-t^4/27
\end{vmatrix}
=\displaystyle
-\frac{2t^3}{27}.
\end{equation*}
Therefore, we have
\begin{equation*}
    df=\frac{2}{27}t^3dtdudIdJ.
\end{equation*}
Equation \eqref{eq-jac1bqf} then follows, since $b((tu) \cdot \sigma_0(I,J))=t^{-2}$.
\end{proof}

\subsection{Counting reducible $\on{PGL}_2(\Z)$-orbits on $V(\Z)$} \label{sec-countpgl}

Let $i \in \{0,1\}$. In this section, we obtain asymptotics for the number of reducible orbits of $\on{PGL}_2(\Z)$ on $V_0(\Z)^{(i)}$ of bounded height, thus proving Theorems~\ref{thm-quarticmaincount}--\ref{thm-needplocalfactsl21}. To simplify the exposition in the rest of this section, we introduce the following notation:
\begin{itemize}[leftmargin=13pt]
\item For any set $S \subset V(\Z)$, let $S_{\on{red}} \subset S$ be the subset of forms in $S$ having a rational linear factor; for $X > 0$, let $S_X \defeq \{B \in S : H(B) < X\}$; and as before, let $S_0 \defeq S\cap V_0(\Z)$ be the set of elements of $S$ that lie on the reducible hyperplane.
\item Let $G_0 \subset \PGL_2(\bR)$ be a fixed nonempty open bounded set such that  $G_0^{-1}=G_0$ and $G_0$ is left- and right-invariant under the group $K'$ generated by $K$ together with the diagonal matrix having diagonal entries $1$ and $-1$. Such a set can be constructed by starting with a nonempty open bounded set $G_0'$ and taking $G_0=K'(G_0'\cup G_0'^{-1}) K'$.
\item Define the multiset $\cB_\infty$ by
\begin{equation*}
\cB_\infty \defeq G_0\cdot \fundset^{(i)}\cap V_0(\R).
\end{equation*}
Set $\cB \defeq (\cB_\infty)_1$, and 
note that by the construction of $\fundset$, we have \mbox{$(\cB_\infty)_X=X^{\frac{1}{6}}\cB$.}
\item We define the quantity $C(\cB)$ by
\begin{equation}\label{eq:CBbqf}
    C(\cB)\defeq\frac{1}{\wt{n}_i\on{Vol}(G_0)} \cdot
\int_{f\in \cB}{\vert}b(f){\vert}df,
\end{equation}
where the volume of $G_0$ is computed using the Haar measure $dg$, and where $\wt{n}_i = 2n_i$, with $n_i$ as defined in \S\ref{sec-set}.
\end{itemize}
\begin{itemize}[
leftmargin=13pt]
\item For a finite set $\Sigma$ of $\PGL_2(\Z)$-orbits on $V(\Z)$, let $\#'\Sigma$ be the number of elements of $\Sigma$, where each $f \in \Sigma$ is counted with weight $1/\#\on{Stab}_{\PGL_2(\Z)}(f)$.
\end{itemize}

\subsubsection{Averaging over fundamental domains} \label{sec-sl2avg}

We begin by applying Bhargava's averaging technique, developed in \cite{MR2183288,MR2745272}. 
By an argument identical to \cite[Theorem 2.5]{MR3272925}, which involves averaging over translates of the fundamental domain $\mc{F}$ by elements of $G_0$ and performing a suitable change-of-variables, \mbox{we have that}
\begin{equation} \label{eq-sl2pmmult}
\begin{array}{rcl}
\displaystyle\#'\left(\frac{(V^{(i)}(\Z)_{\on{red}})_X}{\on{PGL}_2(\Z)}\right) &=& 
\displaystyle\frac{1}{n_i} \cdot \#\big(\cF h \cdot \cR^{(i)} \cap (V(\Z)_{\on{red}})_X\big)
\\[.2in]&=&\displaystyle
\frac{1}{n_i\Vol(G_0)}\int_{g\in\FF}\#\big(
gG_0 \cdot \cR_X^{(i)}\cap V(\Z)_\red\big)dg,
\end{array}
\end{equation}
Then we have the following result:
\begin{prop} \label{prop-afterave4}
We have
\begin{equation} \label{eq-vredx}
\displaystyle\#\left(\frac{(V^{(i)}(\Z)_{\on{red}})_X}{\on{PGL}_2(\Z)}\right)
=\frac{1}{n_i\Vol(G_0)}\int_{t=1}^\infty\int_{u=-\frac{1}{2}}^{\frac12}
\#\big(ut X^{\frac{1}{6}}\mc{B}\cap V_0(\Z)\big)t^{-2}dud^\times t+O_\epsilon(X^{\frac{3}{4}+\epsilon}),
\end{equation}
where $\cB$ is the multiset $\cB \defeq (\cB_\infty)_1 \defeq G_0 \cdot \cR_1^{(i)} \cap V_0(\R)$.
\end{prop}
\begin{proof}
The number of reducible (and irreducible) $\on{PGL}_2(\Z)$-orbits on $V(\Z)$ that have nontrivial integral stabilizer was proven in \cite[Lemma~2.4]{MR3272925} to be bounded $O(X^{\frac34 +\epsilon})$. 
Thus, we may replace the $\#'$ in \eqref{eq-sl2pmmult} with $\#$ at the cost of an error of $O(X^{\frac34 +\epsilon})$. We split the fundamental domain $\FF$ as $\FF = \FF'\sqcup\{utk:u\in[-\frac12, \frac12],\,t\geq 1,\,k\in K\}$, where $\FF'$ is absolutely bounded.  When the integral over $\FF$ is restricted to the compact region $\FF'$, it is clear that the number of forms with vanishing $x^4$-coefficient is bounded by $O(X^{\frac23})$.
By \cite[Lemma 2.3]{MR3272925}, the number of reducible forms with nonzero $x^4$-coefficient is bounded by $O_\epsilon(X^{\frac23 +\epsilon})$. These error terms are sufficiently small.
The proposition then follows upon noting that, by the left $K$-invariance of $G_0$, the set $gG_0\cR^{(i)}$ is independent of $\theta$ when $g$ is written as $g=ut\theta$ in Iwasawa coordinates. 
\end{proof}

\subsubsection{Slicing} \label{sec-sl2pmslicing}
Throughout this subsection we set $Y\defeq X^{\frac16}$. The integrand of the right-hand side of~\eqref{eq-vredx} is the number of integral points in the region $utY\cB\cap V_0(\R)$. This region is typically quite skewed: indeed, whenever $t$ is high up in the cusp, the $x^3y$-coefficient is small, so the volumes of the projections of the set $utY\cB$ away from this coefficient has the same order of magnitude as the volume of $utY\cB$ itself. Furthermore, the region where $t$ is high up in the cusp contributes most of the lattice points that we are interested in counting! We resolve this issue in this section by fibering the region $utY\cB \cap V_0(\R)$ by the $x^3y$-coefficient and using
a result of Davenport to estimate the number of lattice points on each fiber.

We now partition the region $utY\mc{B}$ into slices, one for each possible value of the $x^3y$-coefficient. For any $b \in \R \smallsetminus \{0\}$, and any $\mc{S} \subset V(\R)$, let $\mc{S}|_b$ denote the \emph{slice of $\mc{S}$ at $b$}, i.e., the subset of forms in $\mc{S}$ with $x^4$-coefficient equal to $0$ and $x^3y$-coefficient equal to $b$. Then we can express the integrand of the right-hand side of~\eqref{eq-vredx} as follows:
\begin{equation} \label{eq-easyslice}
\#\big(utY\mc{B} \cap V_0(\Z)\big) = \sum_{\substack{b \in \Z \\ b \neq 0}} \#\big((utY\mc{B})|_b \cap V(\Z)\big)
=\sum_{\substack{b \in \Z \\ b \neq 0}}\Vol\big((utY\mc{B})|_b\big)(1+O(Y^{-1})),
\end{equation}
where the final estimate is a consequence of the following proposition, due to Davenport:

\begin{prop}[\protect{\cite{MR43821}}] \label{prop-davenport}
Let $\cR$ be a bounded, semi-algebraic multiset in $\bR^n$ having maximum multiplicity $m$ that is defined by at most $k$ polynomial inequalities, each having degree at most $\ell$. Let $\cR'$ denote the image of $\cR$ under any $($upper or lower$)$ triangular, unipotent transformation of $\bR^n$. Then the number of integer lattice points $($counted with multiplicity$)$ contained in the region $\cR'$ is given by
$$\on{Vol}(\cR) + O (\max\{\on{Vol}(\ol{\cR}),1\}),$$
where $\on{Vol}(\ol{\cR})$ denotes the greatest $d$-dimensional volume of any projection of $\cR$ onto a coordinate subspace obtained by equating $n-d$ coordinates to zero, where $d$ ranges over all values in $\{1, \dots, n-1\}$. The implied constant in the second summand depends only on $n$, $m$, $k$, and $\ell$.
\end{prop}

Now, since unipotent transformations preserve both the value of $b$ and the volume, we have $\on{Vol}\big((utY\cB)|_b\big) = \on{Vol}\big((tY\cB)|_b\big)$. Combining~\eqref{eq-easyslice} with Proposition~\ref{prop-afterave4} yields the following:
\begin{equation}\label{eq-halfwaysl2}
\begin{array}{rcl}
\displaystyle\#\left(\frac{(V^{(i)}(\Z)_{\on{red}})_X}{\on{PGL}_2(\Z)}\right)  &=&  
\displaystyle\frac{1}{n_i\Vol(G_0)}\sum_{\substack{b \in \Z \smallsetminus \{0\}}}\int_{1\leq t\ll Y^{\frac12}/|b|^{\frac12}}\Vol\big((tY\mc{B})|_b\big)(1+O(Y^{-1}))t^{-2}dud^\times t
\\[.3in]
&&\displaystyle
\qquad\qquad\qquad\qquad\qquad\qquad\qquad\qquad\qquad\qquad\qquad\qquad +O_\epsilon(X^{\frac{3}{4}+\epsilon}).
\end{array}
\end{equation}
The bound in the region of integration above is obtained by noting that the value of $|b|$ is bounded above by $O(Y)$, and for each fixed $b\neq 0$, the range of $t$ goes up to $Y^{\frac12}/\sqrt{|b|}$, since the $x^3y$-coefficients of elements in $utY\mc{B}$ are $\ll t^{-2}Y$.
We bound the error term in the right-hand side of \eqref{eq-halfwaysl2} to be
\begin{equation} \label{eq-ridoferror}
    \ll Y^{-1}\on{Vol}((tY\cB)|_b) \ll \sum_{b = 1}^{O(Y)} \int_{t = 1}^{O(Y^{\frac{1}{2}}  /\sqrt{b})} Y^2t^6  \frac{d^\times t}{t^2} \ll Y^{4}\sum_{b = 1}^{O(Y)} b^{-2} \ll X^{\frac23}.
\end{equation}
Substituting the estimate~\eqref{eq-ridoferror} into~\eqref{eq-halfwaysl2} yields
\begin{equation} \label{eq-toreferlater}
\#\left(\frac{(V^{(i)}(\Z)_{\on{red}})_X}{\on{PGL}_2(\Z)}\right)
= \frac{1}{n_i\Vol(G_0)}\sum_{\substack{b \in \Z \smallsetminus \{0\}}}\int_{1\leq t\ll Y^{\frac12}/|b|^{\frac12}} \Vol\big((tY\mc{B})|_b\big)\frac{d^\times t}{t^2}
 + O_\epsilon(X^{\frac34+\epsilon}).
\end{equation}

We now manipulate the integrand in~\eqref{eq-toreferlater} to extract its dependence on the slicing index $b$. Because unipotent transformations leave volumes unchanged, and the action of $t$ on $V(\R)$ scales the $x^iy^j$-coefficient by $t^{i-j}$, we have
\begin{equation} \label{eq-unisl2}
\Vol\big((tY\mc{B})|_b\big) = Y^3\Vol\big((t\mc{B})|_{b/Y}\big) = 
t^6X^{1/2}\on{Vol}\big(\mc{B}|_{t^{2}b/Y}\big).
\end{equation}
Since $G_0$ is left-$K'$-invariant and since the diagonal matrix with entries $1$ and $-1$ belongs to $K'$, it follows that $\on{Vol}(\mc{B}|_\beta) =  \on{Vol}(\mc{B}|_{-\beta})$ for any $\beta \in \R \smallsetminus \{0\}$.
Hence, setting $\beta=t^2b/Y$ (which gives $d^\times\beta=2d^\times t$), we obtain
\begin{equation} \label{eq-firstimebeta}
\begin{array}{rcl}
\displaystyle\#\left(\frac{(V^{(i)}(\Z)_{\on{red}})_X}{\on{PGL}_2(\Z)}\right) &=& 
\displaystyle\frac{X^{1/2}}{2n_i\Vol(G_0)}  \sum_{\substack{b \in \Z \smallsetminus \{0\}}}b^{-2} \int_{|b|/Y\leq\beta\ll 1} Y^2\beta^2  \on{Vol}\big(\mc{B}|_\beta\big) d^\times\beta + O_\epsilon(X^{\frac34+\epsilon})
\\[.25in]
&=&\displaystyle\frac{X^{5/6}}{n_i\Vol(G_0)}  \sum_{b = 1}^{\infty}b^{-2} \int_{\beta \geq 0} \beta  \on{Vol}\big(\mc{B}|_\beta\big) d\beta + O_\epsilon(X^{\frac34+\epsilon}),
\end{array}
\end{equation}
where the second line above follows since $\mc{B}|_\beta$ is a bounded set and $\on{Vol}\big(\mc{B}|_\beta\big) \ll 1$. It therefore follows that
\begin{equation} \label{eq-nomorebeta}
\#\left(\frac{(V^{(i)}(\Z)_{\on{red}})_X}{\on{PGL}_2(\Z)}\right)  = \zeta(2) \cdot C(\cB)\cdot X^{5/6} + O_\epsilon(X^{\frac34 +\epsilon}),
\end{equation}
where $C(\cB)$ was defined in \eqref{eq:CBbqf}.
Note that while going from \eqref{eq-toreferlater} to \eqref{eq-nomorebeta}, we pick up a factor of $1/2$ from $d^\times\beta=2d^\times t$, a factor of $2$ from restricting the sum over $b\in\Z$ to the sum over $b>0$, and another factor of $1/2$ by replacing the integral over $\beta\geq 0$ to the integral over all $\beta$ in the definition of $C(\cB)$ (which is equivalent to the integral over all $f\in\cB$).

\subsubsection{Computing the constant} \label{sec-compconst4}

In this section, we compute the value of $C(\cB)$. Recall that we defined $\cB$ to be the multiset $\cB \defeq G_0 \cdot \mc{R}_1^{(i)} \cap V_0(\R)$. Since $G_0$ is right $K'$-invariant, we may write $G_0 = \mc{S}K'$ for some $\mc{S} \subset N(\R)T$. Hence, we have that $\cB = \mc{S}K' \cdot \mc{R}_1^{(i)} \cap V_0(\R) = \mc{S} \cdot (K'\mc{R}_1^{(i)})_0$. We then have the following lemma concerning the multiplicity of the fiber of \mbox{$(K'\mc{R}^{(i)})_0$ over $\on{inv}(V(\R)^{(i)}) \subset \R^2$:}

\begin{lemma} \label{lem-whatsthefibersize}
    The map $\on{inv} \colon (K' \mc{R}^{(i)})_0 \to \{\on{inv}(V(\R)^{(i)})$ is $\wt{n}_i$ to $1$.
\end{lemma}
\begin{proof}
    We begin by noting that the map is certainly surjective since the invariants $I$ and $J$ of $x^3y+dxy^2+ey^3$ are linear in $d$ and $e$, respectively.
    It thus suffices to prove that the map $\on{inv} \colon \{f \in (K'\mc{R}^{(i)})_0 \colon b(f) > 0\} \to \on{inv}(V(\R)^{(i)})$ is $n_i$ to $1$. 
    This is a consequence of the following two facts: first, the stabilizer in $\on{PGL}_2(\R)$ of any element in the image has size $n_i$, and second, the group $N(\R)T$ acts simply transitively on $\on{inv}^{-1}(I,J) \cap V_0(\R)$ for any $(I,J) \in \on{inv}(V(\R)^{(i)})$ by Lemma~\ref{lem-cantranslate44}. Indeed, given $f \in \mc{R}^{(i)}$ having invariants $(I,J)$, and $p\theta \in \on{Stab}_{\on{PGL}_2(\R)}(f)$ with $p \in N(\R)T$ and $\theta \in K'$, the element $\theta f = p^{-1}f$ belongs to $(K'\mc{R}^{(i)})_0$ and has invariants $(I,J)$. This association yields the result. 
\end{proof}

We are now in position to compute the constant $C(\cB)$:

\begin{prop} \label{prop-cbi}
    We have that $$C(\cB) = \frac{1}{27}\Vol\bigl\{
(I,J)\in\R^2:(-1)^i\Delta(I,J)>0,\,H(I,J)<1
\bigr\}.$$
\end{prop}
\begin{proof}
We have
\begin{equation} \label{eq-jacsimplify4}
\begin{array}{rcl}
\displaystyle C(\cB)=\frac{1}{\wt{n}_i\Vol(G_0)}\int_{f \in \mc{B}} |b(f)|df&=&
\displaystyle\frac{1}{\wt{n}_i\Vol(G_0)}
\int_{f\in \mc{S}\cdot (K'\cR^{(i)}_1)_0}|b(f)|df
\\[.2in]&=&\displaystyle
\frac{2}{27\Vol(G_0)}
\int_{\substack{(I,J)\in\R^2\\(-1)^i\Delta(I,J)>0\\H(I,J)<1}}\int_{h\in \mc{S}}dhdIdJ
\\[.2in]&=&\displaystyle
\frac{2\on{Vol}_{\on{right}}(\mc{S})\Vol\bigl\{
(I,J)\in\R^2:(-1)^i\Delta(I,J)>0,\,H(I,J)<1\bigr\}}{27\on{Vol}(\mc{S}K')}
,
\end{array}
\end{equation}
where the second line follows by applying the Jacobian change-of-variables established in Proposition~\ref{prop-jac4} along with Lemma \ref{lem-whatsthefibersize}. In the third line, $\on{Vol}_{\on{right}}(\mc{S})$ denotes the volume of $\mc{S}$ with respect to the right Haar measure on $P(\R)$. But since $\on{Vol}(K)$ is normalized to be equal to $1$, we have that $\on{Vol}_{\on{right}}(\mc{S}) = \frac{1}{2}\on{Vol}(K'\mc{S})$, where the volume is computed with respect to the Haar measure on $G(\R)$. The next lemma demonstrates that $\on{Vol}(K'\mc{S}) = \on{Vol}(\mc{S}K')$:
\begin{lemma} \label{lem-volswitch}
We have $K'\mc{S} = \mc{S}K'$, and in particular $\on{Vol}(K'\mc{S}) = \on{Vol}(\mc{S}K')$.
\end{lemma}
\begin{proof}[Proof of Lemma~\ref{lem-volswitch}]
Since $G_0 = \mc{S}K'$ is left-$K'$-invariant and inversion-invariant, it follows that
\begin{equation} \label{eq-chainsl2}
K'\mc{S} \subset K'\mc{S}K' = \mc{S}K' = G_0 = G_0^{-1} = K'\mc{S}^{-1}.
\end{equation}
Since the Iwasawa decomposition of $\on{PGL}_2(\R)$ is unique,~\eqref{eq-chainsl2} implies that $\mc{S} \subset \mc{S}^{-1}$, and hence also that $\mc{S} = \mc{S}^{-1}$. Thus, $\mc{S}K' = K'\mc{S}^{-1} = K'\mc{S}$, as desired.
\end{proof}
Combining~\eqref{eq-jacsimplify4} with the result of Lemma~\ref{lem-volswitch} completes the proof of Proposition~\ref{prop-cbi}.
\end{proof}

\subsubsection{Congruence Conditions} \label{sec-sl2conditions}

We now prove Theorem~\ref{thm-bigquartic}. Let $S \subset V(\Z)^{(i)}$ be a big family, and suppose for now that $S$ is defined by congruence conditions at finitely many places (i.e., suppose that $S_p = V(\Z_p)$ for all primes $p \gg 1$). For each $b \in \Z_p \smallsetminus \{0\}$, let
$$(S_p)_0|_b \defeq \{f \in (S_p)_0 : b(f) = b\},$$
and for each $b \in \Z \smallsetminus \{0\}$, let $\nu(S_0|_b) \defeq \prod_p \on{Vol}((S_p)_0|_b)$ denote the density of the slice $S_0|_b$ in $V_0(\Z)|_b$; here, each $p$-adic volume $\on{Vol}((S_p)_0|_b)$ is computed with respect to the Euclidean measure on $V_0(\Z_p)|_b$, normalized so that $V_0(\Z_p)|_b$ has volume $1$. Then an argument identical to the one used to obtain~\eqref{eq-toreferlater} yields the following asymptotic formula:
\begin{equation}
    \begin{array}{rcl}
\displaystyle\#\left(\frac{(S_{\on{red}})_X}{\PGL_2(\Z)}\right)
 &=& \displaystyle\frac{1}{n_i\Vol(G_0)} \sum_{b \in \Z \smallsetminus \{0\}}\nu(S_0{\vert}_b)\int_{t \geq 1} \Vol\big((tY\mc{B}){\vert}_b\big)t^{-2}d^\times t +O_\epsilon\bigl(X^{\frac{3}{4}+\epsilon}\bigr).
\end{array}
\end{equation}
Note that the upper bound on $t$ in \eqref{eq-toreferlater} can be omitted, since if $t> cY^{1/2}/b^{1/2}$ for some sufficiently large constant $c$, then $(tY\cB)|_b$ is empty.
The $\on{PGL}_2(\Z)$-invariance of $S$ implies that $\nu(S_0|_b) = \nu(S_0|_{|b|})$ for all $b$. Hence, the argument used to deduce~\eqref{eq-firstimebeta} and~\eqref{eq-nomorebeta} yields the following estimate:
\begin{equation} \label{eq-congruent4}
\displaystyle\#\left(\frac{(S_{\on{red}})_X}{\PGL_2(\Z)}\right)
=
C(\cB)\cdot \Bigl(\sum_{b = 1}^\infty\frac{\nu(S_0{\vert}_b)}{b^2}\Bigr)\cdot X^{\frac{5}{6}} + O_\epsilon\big(X^{\frac{3}{4}+\epsilon}\big).
\end{equation}

To evaluate the sum over $b$ on the right-hand side of~\eqref{eq-congruent4}, we use the following property, which is a consequence of the fact that $S$ is a big family: if $p$ is a prime and $b,b' \in \Z_p \smallsetminus \{0\}$ are elements such that ${\vert}b_i{\vert}_p = {\vert}b_i'{\vert}_p$ for each $i$, then $\on{Vol}((S_p)_0{\vert}_b) = \on{Vol}((S_p)_0{\vert}_{b'})$. By repeatedly using this property, we obtain the following chain of equalities:
\begin{equation*} 
\begin{array}{rcl}
   \displaystyle \sum_{b = 1}^\infty\frac{\nu(S_0{\vert}_b)}{b^2} & = &  \displaystyle \prod_p \sum_{i = 0}^\infty
\frac{\on{Vol}((S_p)_0{\vert}_{p^i})}{p^{2i}}\\
  \displaystyle  & = &  \displaystyle \prod_p \Big(1 - \frac{1}{p}\Big)^{-1}\int_{\substack{b \in \Z_p \\ b \neq 0}} |b|_p\on{Vol}\bigl((S_p)_0{\vert}_{b}\bigr) db \\
&=& \displaystyle \prod_p\Big(1-\frac{1}{p}\Big)^{-1} \int_{f\in (S_p)_0} {\vert}b(f){\vert}_p df,
    \end{array}
\end{equation*}
where the second line above follows by partitioning the region of integration $\Z_p \smallsetminus \{0\}$ into level sets for the integrand and summing over all such level sets, and where the last line above follows just as in~\eqref{eq-nomorebeta}.

It remains to handle the case where $S$ is a big family defined by congruence conditions at infinitely many places. This case follows by using an inclusion-exclusion sieve\footnote{We do not flesh out the sieving argument here to avoid being repetitive, because in Section~\ref{sec-sieve} (to follow), we use the same sort of argument to prove Theorem~\ref{thm-acceptcubic}.} in conjunction with the following bound on the number of $\PGL_2(\Z)$-equivalence classes of forms $f$ with large $b(f)$-value:
\begin{theorem} \label{thm-quarticbsw}
Fix a real number $M > 0$. Then the number of $\PGL_2(\Z)$-equivalence classes of $($or equivalently, $P(\Z)$-orbits of$)$ elements of the set $\{f \in V_0(\Z) : H(f) < X,\, {\vert}b(f){\vert}
\geq M\}$ is bounded by $O\big(X^{\frac{5}{6}}/M\big) + O_\epsilon\big(X^{\frac{3}{4}+\epsilon}\big)$, where the implied constant is independent of $M$.
\end{theorem}
\begin{proof}
The required bound follows immediately from the proof of Theorem~\ref{thm-quarticmaincount} by simply summing~\eqref{eq-firstimebeta} over only those $b$ such that ${\vert}b{\vert} \geq M$.
\end{proof}

This concludes the proof of Theorem~\ref{thm-bigquartic}. We finish by noting that Theorem~\ref{thm-needplocalfactsl21} follows from Theorem~\ref{thm-bigquartic} by applying the Jacobian change-of-variables result in Proposition~\ref{prop-jac4} to each $p$-adic integral.


\section{Reduction theory for the action of $\GG(\Z)$ on $W(\R)$}\label{sec-cubicredux}

Fix an integer $n\geq 3$. In this section, we construct finite covers of a fundamental set for the action of $G(\Z)$ on $W(\R)$.
As in \S\ref{sec-sl2redux}, we achieve this in two steps: first, in~Sections~\ref{sec-boxstatement}--\ref{sec-fundgg'}, we choose a certain fundamental domain $\FF$ for the action of $G(\Z)$ on $G(\R)$, and then in~Section~\ref{sec-reduct}, we combine $\FF$ with fundamental sets for the action of $\GG(\R)$ on $W(\R)$ to construct \mbox{our required covers.}

Note that it suffices to construct a fundamental domain for $\on{O}_{\mc{A}}(\Z)$ on $\on{O}_{\mc{A}}(\R)$, as such a domain would also be a fundamental domain for $G(\Z)$ on $G(\R)$. Indeed, when $n$ is odd, this follows because $\on{O}_{\mc{A}}(\Z)$ contains an element of determinant $-1$, namely, the negative of the identity matrix. When $n$ is even, this follows because the $2$-torsion elements of $\on{O}_{\mc{A}}(\R)$ are contained in $\on{O}_{\mc{A}}(\Z)$. Thus, in Sections~\ref{sec-boxstatement}--\ref{sec-fundgg'}, we work with the group $\on{O}_{\mc{A}}$. Furthermore, for our counting purposes, we cannot simply use any fundamental domain $\FF$ for the action of $\on{O}_{\mc{A}}(\Z)$ on $\on{O}_{\mc{A}}(\R)$; rather, we require that $\FF$ be {\it box-shaped at infinity}, and we prove that such a choice of fundamental domain \mbox{exists in Section~\ref{sec-fundgg'}.}

\subsection{A coordinate system for $\on{O}_{\mc{A}}$}\label{sec-boxstatement} \label{sec-coord}

We begin by recalling the Iwasawa decomposition
\begin{equation*}
\on{O}_\AA(\R)=N(\R)TK,
\end{equation*}
where $N$ denotes the (algebraic) group of lower triangular unipotent matrices in $\on{O}_\AA$, $T$ denotes the set of diagonal matrices with positive entries contained in $\on{O}_\AA(\R)$, and $K$ denotes a maximal compact subgroup of $\on{O}_\AA(\R)$. We note that $T$ is a maximal torus of $\on{O}_\AA(\R)$, and that the elements of $T$ normalize $N(\R)$.

A calculation shows that the elements of $N(\R)$ \mbox{are parametrized as in Figure~\ref{fig-bigmats},}
\small
\begin{sidewaysfigure}
\vspace*{6.5in}
\centering
\begin{equation} \label{eq-uform1}
\left[\begin{array}{ccccccccccc} 1 &  & & & & & & &  &  \\
u_{21} & 1 & & & & &  & & &  &  \\
u_{31} & u_{32} & 1 & & & & & & &   &  \\
\vdots & \vdots & \vdots & \ddots &  & & & & & & \\
u_{(\lceil \frac{n}{2} \rceil-1)1} & u_{(\lceil \frac{n}{2} \rceil-1)2} & u_{(\lceil \frac{n}{2} \rceil-1)3} & \cdots & 1 & & & & & &  \\
u_{\lceil \frac{n}{2} \rceil1} & u_{\lceil \frac{n}{2} \rceil2} & u_{\lceil \frac{n}{2} \rceil3} & \cdots & u_{\lceil \frac{n}{2} \rceil \lfloor \frac{n}{2} \rfloor} & 1 & & & & & \\
u_{(\lceil \frac{n}{2} \rceil+1)1} & u_{(\lceil \frac{n}{2} \rceil+1)2} & u_{(\lceil \frac{n}{2} \rceil+1)3} &  \cdots  & -\frac{1}{2} u_{\lceil \frac{n}{2} \rceil \lfloor \frac{n}{2}\rfloor}^2+* & -u_{\lceil \frac{n}{2} \rceil \lfloor \frac{n}{2} \rfloor}  & 1 & &  & & \\
\vdots & \vdots & \vdots & \rddots & \vdots & \vdots & \vdots & \ddots & &  & \\
u_{(n-2)1} & u_{(n-2)2} & -\frac{1}{2}u_{\lceil \frac{n}{2} \rceil3}^2+* & \cdots & * & * & * &  \cdots &  1 & & \\
u_{(n-1)1} & -\frac{1}{2} u_{\lceil \frac{n}{2} \rceil2}^2 + * & * & \cdots &  * & * & * & \cdots & -u_{32} & 1 & \\
- \frac{1}{2}u_{\lceil \frac{n}{2} \rceil1}^2 + * & * & * & \cdots &  * & * & * & \cdots & * & -u_{21} & 1  \end{array}\right]
\end{equation}
\begin{equation} \label{eq-uform2}
\left[\begin{array}{cccccccccccc} 1 &  & & & & & & &  & & \\
u_{21} & 1 & & & & &  & & &  & & \\
u_{31} & u_{32} & 1 & & & & & & & & &  \\
\vdots & \vdots & \vdots & \ddots &  & & & & & & & \\
u_{(\frac{n-2}{2})1} & u_{(\frac{n-2}{2})2} & u_{(\frac{n-2}{2})3} & \cdots & 1 & & & & & & &  \\
u_{\frac{n}{2}1} & u_{\frac{n}{2}2} & u_{\frac{n}{2}3} & \cdots & u_{\frac{n}{2}(\frac{n-2}{2})} & 1 & & & & & & \\
u_{(\frac{n+2}{2})1} & u_{(\frac{n+2}{2})2} & u_{(\frac{n+2}{2})3} &  \cdots  & u_{(\frac{n+2}{2})(\frac{n-2}{2})} & 0 & 1 & & & & & \\
u_{(\frac{n+4}{2})1} & u_{(\frac{n+4}{2})2} & u_{(\frac{n+4}{2})3} &  \cdots  & * & * & -u_{\frac{n}{2}(\frac{n-2}{2})} & 1 & & & & \\
\vdots & \vdots & \vdots & \rddots & \vdots & \vdots & \vdots & \vdots & \ddots & &  & \\
u_{(n-2)1} & u_{(n-2)2} & * & \cdots & * & * & * & * &  \cdots &  1 & & \\
u_{(n-1)1} & * & * & \cdots &  * & * & * & * & \cdots & -u_{32} & 1 & \\
 * & * & * & \cdots &  * & * & * & * & \cdots & * & -u_{21} & 1  \end{array}\right]
\end{equation}
\caption{Parametrization of elements of $N(\R)$.}
 \label{fig-bigmats}
\end{sidewaysfigure}
\normalsize
where the $u_{ij} \in \R$ for $i \in \{2, \dots, n-1\}$ and \mbox{$j \in \{1, \dots, \min\{i-1,n-i\}\}$} are free parameters, and where the symbol ``$*$'' is shorthand and is read as follows: if the ``$*$'' occurs in the row-$i$, column-$j$ entry, then it denotes ``some polynomial of positive degree in the variables $\{u_{i'j'} : i' - j' \leq i-j\}$ with integer coefficients and no constant term'' (the polynomial being abbreviated depends on the matrix entry in which it occurs). We often abbreviate the tuple $$\big(u_{ij} : i \in \{2, \dots, n-1\},\, j \in \{1, \dots, \min\{i-1,n-i\}\}\big)$$ by $u$ and abuse notation by writing $u$ for the \mbox{corresponding element of $N(\R)$.}

Elements of $T$ have the form $s=\diag(t_1,\ldots,t_n)$ with $t_it_{n-i+1}=1$ for $i\in\{1,\ldots,\lceil \frac{n}{2}\rceil\}$. (Note in particular that when $n$ is odd, we have $t_{\frac{n+1}{2}}=1$.) In the sequel, it will be convenient to use the following alternative coordinates for $T$. Define the coordinates $(s_1,\ldots,s_{\lfloor \frac{n}{2}\rfloor})$ to be such that $(t_1,\ldots,t_{\lfloor \frac{n}{2}\rfloor})$ is equal to
\begin{align}
    & \Bigl(\prod_{i = 1}^{\lfloor \frac{n}{2} \rfloor}s_i^{-1},\,\prod_{i = 2}^{\lfloor \frac{n}{2} \rfloor} s_i^{-1}, \dots, \, s_{\lfloor \frac{n}{2} \rfloor}^{-1}\Bigr), \quad & \text{ if $2 \nmid n$,} \label{eq-sodd} \\
    &  \Bigl(\mathfrak{s}_n^{-1}\cdot \prod_{i = 1}^{\frac{n-4}{2}}s_i^{-1},\,\mathfrak{s}_n^{-1} \cdot \prod_{i = 2}^{\frac{n-4}{2}} s_i^{-1}, \dots, \mathfrak{s}_n^{-1} \cdot s_{\frac{n-4}{2}}^{-1},\, \mathfrak{s}_n^{-1}, \, \mathfrak{s}_n^{-1} \cdot s_{\frac{n-2}{2}}\Bigr), \quad & \text{ if $2 \mid n$,} \label{eq-seven}
    \end{align}
where $s_i > 0$ for each $i$ and where $\mathfrak{s}_n \defeq \sqrt{s_{\frac{n-2}{2}}s_{\frac{n}{2}}}$ when $n$ is even.

We denote the elements of $K$ by $\theta$. With respect to the coordinates $u$ on $N(\R)$, $s$ on $T$, and $\theta$ on $K$, the Haar measure $dg$ on $\on{O}_\AA(\R)$ is given by
 \begin{align}
 & dg = d\theta du(\delta(s)d^\times s), \,\, \text{where} \,\, du \defeq \prod_{i = 1}^{\lceil\frac{n}{2} \rceil} \prod_{j = 1}^{i-1} du_{ij},\,\, d^\times s \defeq \prod_{i = 1}^{\lfloor \frac{n}{2} \rfloor} \frac{ds_i}{s_i}, \,\, \text{and} \label{eq-haarcubic} \\
  & \quad\quad\,\,\,\,\,\,\, \delta(s) \defeq \begin{cases} \prod_{i = 1}^{\lfloor \frac{n}{2}\rfloor} s_i^{i^2 - 2i\lfloor \frac{n}{2}\rfloor}, & \text{if $2 \nmid n$,} \\ \big(s_{\frac{n-2}{2}}s_{\frac{n}{2}}\big)^{-\frac{n^2 - 2n}{8}} \prod_{i = 1}^{ \frac{n-4}{2}} s_i^{i^2 - i(n-1)}, & \text{if $2 \mid n$.} \end{cases} \nonumber
\end{align}
Above, $d\theta$ is normalized so that $\int_{\theta \in  \{\pm \on{id}\} \backslash K} d\theta = 1$, and $du$ is normalized so that $N(\Z)$ has covolume $1$ in $N(\R)$.

\subsection{A box-shaped fundamental domain for $\on{O}_\AA(\Z)$ $\curvearrowright$ $\on{O}_\AA(\R)$}\label{sec-fundgg'}

A fundamental domain $\mc{F}$ is said to be \emph{box-shaped at infinity} if it can be sandwiched as $\mc{S}_1 \subset \mc{F} \subset \mc{S}_2$, where $\mc{S}_1 \subset \mc{S}_2$ are nested generalized Siegel sets\footnote{Here, a fundamental domain $\mc{F} \subset \on{O}_{\AA}(\R)$ is defined to be a measurable subset such that there exists a subset $\mc{M} \subset \on{O}_{\AA}(\R)$ of full measure with the property that every $g \in \mc{M}$ is $\on{O}_{\AA}(\Z)$-equivalent to a unique element of $\mc{F}$. By a generalized Siegel set, we mean a finite union of Siegel sets.} satisfying the following conditions.
\begin{itemize}[leftmargin=18pt]
\item[{\rm (a)}] There exists an open subset $\mc{U}_1 \subset \mathfrak{S}_1$ of full measure such that every $\on{O}_\AA(\Z)$-orbit on $\on{O}_\AA(\R)$ meets $\mc{U}_1$ at most once;
\item[{\rm (b)}] Every $\on{O}_\AA(\Z)$-orbit on $\on{O}_\AA(\R)$ meets $\mc{S}_2$ at least once; and
\item[{\rm (c)}] The set $\mc{S}_2 \smallsetminus \mc{S}_1$ is empty ``sufficiently high in the cusp,'' in the sense that, for some $c > 0$, the set $T_c \defeq \{s = (s_1, \dots, s_{\lfloor \frac{n}{2}\rfloor}) \in T : \text{$s_i > c$ for all $i$}\}$ has the property that $\mc{S}_1 \cap NT_cK = \mc{S}_2 \cap NT_cK$.
\end{itemize}
Because they are defined by simple equations in the cusp, box-shaped fundamental domains are particularly amenable to explicit computations. For instance, we use the box-shaped property to evaluate a certain integral that arises in the proof of Theorem~\ref{thm-maincount2} (see~\eqref{eq-switch}). As another example of the utility of box-shaped fundamental domains, see~\cite{MR934172,MR1227503}, where Grenier proves the analogue of Theorem~\ref{thm-boxfunddomain} for the group $\on{SL}_n$ and remarks that his result could be used to compute certain integrals of Eisenstein series that arise when generalizing Selberg's trace formula to the group $\on{SL}_n(\Z)$. Grenier's work has had a number of applications in the literature (see, e.g.,~\cite{MR866106,MR2221138,MR2048520}), and as explained in Section~\ref{sec-fundgg'} (to follow), it plays a central role in our proof of Theorem~\ref{thm-boxfunddomain}.

In this subsection, we construct a box-shaped fundamental domain for the action of $\on{O}_\AA(\Z)$ on $\on{O}_\AA(\R)$. Specifically, we prove the following result:
\begin{theorem} \label{thm-boxfunddomain}
There exists a fundamental domain for the action of $\on{O}_\AA(\Z)$ on $\on{O}_\AA(\bR)$ that is box-shaped at infinity.
\end{theorem}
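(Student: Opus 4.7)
\emph{Proof plan.} The strategy is to construct $\mc{F}$ by sandwiching it between two carefully chosen Siegel sets with respect to the Iwasawa decomposition $\on{O}_{\mc{A}}(\R)=N(\R)TK$ of \S\ref{sec-coord}. Fix a compact fundamental domain $\omega_1\subset N(\R)$ for $N(\Z)\curvearrowright N(\R)$ (a box in the coordinates $u_{ij}$), and fix a slightly enlarged compact $\omega_2\supset\omega_1$. By the classical reduction theory of Borel and Harish-Chandra~\cite{MR0148666,MR147566}, choose constants $c_1>c_2>0$ so that the Siegel sets $\mathfrak{S}_i\defeq \omega_i T_{c_i}K$ satisfy $\mathfrak{S}_1\subset\mathfrak{S}_2$ with $\mathfrak{S}_2$ surjecting onto $\on{O}_{\mc{A}}(\Z)\backslash\on{O}_{\mc{A}}(\R)$. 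Define $\mc{F}$ to agree with $\mathfrak{S}_1$ on the deep cusp $N(\R)T_c K$ for some $c\geq c_1$ chosen below, and on the (relatively compact) complement choose, by any measurable selection procedure, one representative from each of the finitely many identifications. The sandwich $\mathfrak{S}_1\subset\mc{F}\subset\mathfrak{S}_2$ and the cuspidal agreement $\mc{F}\cap N T_c K=\mathfrak{S}_1\cap N T_c K$ are then immediate, so conditions (a)--(c) of box-shapedness follow from choosing $c$ correctly.

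\emph{Key step.} The heart of the proof is to show that for $c$ sufficiently large, $\mathfrak{S}_1\cap N(\R)T_c K$ is already a \emph{strict} fundamental set for $\on{O}_{\mc{A}}(\Z)\curvearrowright\on{O}_{\mc{A}}(\R)$, i.e., no two distinct points of it are $\on{O}_{\mc{A}}(\Z)$-equivalent; the Borel--Harish-Chandra finiteness theorem guarantees that the set
\begin{equation*}
\Xi\defeq \{\gamma\in\on{O}_{\mc{A}}(\Z):\gamma\mathfrak{S}_2\cap\mathfrak{S}_2\neq\emptyset\}
\end{equation*}
is finite. One then argues that any $\gamma\in\Xi\smallsetminus\{1\}$ capable of identifying two points both lying in $N(\R)T_c K$ must preserve the standard cusp direction, and hence lie in the minimal parabolic $\mc{P}(\Z)$; the defining relations $t_i t_{n-i+1}=1$ with $t_i>0$ force $T\cap\on{O}_{\mc{A}}(\Z)=\{\mathrm{id}\}$, so $\gamma\in N(\Z)$, and the choice of $\omega_1$ as a fundamental domain for $N(\Z)\curvearrowright N(\R)$ rules out any $\gamma\neq 1$.

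\emph{Main obstacle and workaround.} The essential difficulty is the claim that every $\gamma\in\Xi$ outside $\mc{P}(\Z)$ moves the coordinates $(s_1,\dots,s_{\lfloor n/2\rfloor})$ enough to eject any sufficiently deep cusp point from $\mathfrak{S}_2$; equivalently, that no non-parabolic $\gamma\in\on{O}_{\mc{A}}(\Z)$ identifies two points arbitrarily high in the cusp. Controlling this directly requires a case analysis of the Weyl-group action on the coordinates~\eqref{eq-sodd}--\eqref{eq-seven}, which differs between the type $B_{\lfloor n/2\rfloor}$ (odd $n$) and type $D_{n/2}$ (even $n$) cases, and in the even case one must additionally handle the outer component of $\on{O}_{\mc{A}}$. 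The cleanest workaround, and the one we will adopt, is to piggy-back on Grenier's construction of a box-shaped fundamental domain for $\on{SL}_n(\Z)\curvearrowright\on{SL}_n(\R)$~\cite{MR934172,MR1227503}: embed the symmetric space $\on{O}_{\mc{A}}(\R)/K$ as a totally geodesic submanifold of $\on{SL}_n(\R)/\on{SO}_n(\R)$ in a way that makes the $\on{O}_{\mc{A}}$-Iwasawa data $(u,s)$ restrict from the ambient $\on{SL}_n$-Iwasawa data, intersect Grenier's domain with this submanifold, and transfer box-shapedness back. This reduces the required Weyl-element estimates to the $\on{SL}_n$ case already handled by Grenier, and leaves only the (now straightforward) verification that $\on{O}_{\mc{A}}(\Z)$-equivalences on the submanifold come from $\on{O}_{\mc{A}}(\Z)$ rather than merely $\on{SL}_n(\Z)$ elements preserving the submanifold.
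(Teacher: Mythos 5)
Your overall strategy matches the paper's: sandwich $\mc{F}$ between nested Siegel sets $\mathfrak{S}_1 \subset \mathfrak{S}_2$, invoke Borel--Harish-Chandra to get $\mathfrak{S}_2$ to surject onto $\on{O}_{\mc{A}}(\Z)\backslash\on{O}_{\mc{A}}(\R)$, and use Grenier's box-shaped fundamental domain for $\on{SL}_n^\pm(\Z)$ to establish injectivity of $\mathfrak{S}_1$ high in the cusp. These are indeed the right tools. However, there are two genuine gaps.

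First, in your ``Key step'' you argue that a $\gamma\in\Xi\smallsetminus\{1\}$ identifying two deep cusp points must lie in $\mc{P}(\Z)$, then that $T\cap\on{O}_{\mc{A}}(\Z)=\{\mathrm{id}\}$ forces $\gamma\in N(\Z)$. This last implication is false. The group $T$ in the paper denotes only the \emph{positive}-entry diagonal matrices, and indeed $T\cap\on{O}_{\mc{A}}(\Z)=\{\mathrm{id}\}$; but $\mc{P}(\Z)$ contains all $2^{\lceil n/2\rceil}$ integral diagonal matrices $d=\mathrm{diag}(d_1,\dots,d_n)$ with $d_i\in\{\pm 1\}$ and $d_i=d_{n-i+1}$, and these have trivial unipotent part. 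Such a $\gamma=d\ne\mathrm{id}$ fixes the $s$-coordinates while flipping signs of the $u$-coordinates, and a generic $\omega_1$ that is merely a fundamental domain for $N(\Z)\curvearrowright N(\R)$ will not be invariant under these flips. Without further care, $\mathfrak{S}_1\cap NT_cK$ is \emph{not} a strict fundamental set, and the box you want is spoiled. The paper deals with this head-on: it introduces the finite subgroup $\Gamma\subset\on{O}_{\mc{A}}(\Z)$ of diagonal $\pm 1$ matrices, shows $\Gamma$ acts on $\ol{\mc{N}}$ by sign flips, and replaces $\ol{\mc{N}}$ by a strict fundamental domain $\wt{\mc{N}}$ for this $\Gamma$-action (with sign normalizations $u_{i(i-1)}\in[0,\tfrac12]$, etc.). Your $\omega_1$ must be similarly normalized; otherwise, the argument does not close.

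Second, your ``workaround'' of intersecting Grenier's $\on{SL}_n$-domain with the totally geodesic submanifold $\on{O}_{\mc{A}}(\R)/K\subset\on{SL}_n(\R)/\on{SO}_n(\R)$ is not quite right as stated. Even though the $\on{O}_{\mc{A}}$-Iwasawa coordinates $(u,s)$ embed in the $\on{SL}_n$-Iwasawa coordinates, a unipotent $u\in N(\R)$ satisfying the orthogonal relations generally does \emph{not} land inside Grenier's compact box $\mathscr{N}'$: the ``determined'' entries of $u$ in the lower-right triangle are polynomials in the free entries and can be large. So the intersection of Grenier's domain with the submanifold misses most of the cusp (and surjectivity of the intersection for the $\on{O}_{\mc{A}}(\Z)$-action is also not automatic, since $\on{SL}_n^\pm(\Z)$-translates that return to Grenier's domain need not be $\on{O}_{\mc{A}}(\Z)$-translates). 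The paper sidesteps this by not intersecting at all: it keeps $\mathfrak{S}_2$ from Borel--Harish-Chandra for surjectivity, and for injectivity it shows that each generic $u\in\wt{\mc{N}}$ has a \emph{unique} translate $u_0u$ with $u_0\in\mathscr{N}\cap\on{SL}_n^\pm(\Z)$ lying in the interior of $\mathscr{N}'$; then an $\on{O}_{\mc{A}}(\Z)$-identification of two such points gives an $\on{SL}_n^\pm(\Z)$-identification of their Grenier representatives, and Grenier's uniqueness forces equality. You should replace the ``intersect'' step with this translate-then-compare argument, and also track the extra involution $\varepsilon_n$ appearing in Grenier's setup for even $n$, which the paper handles via the quotient $\{\pm\mathrm{id},\pm\varepsilon_n\}\backslash\mathscr{K}$.
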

Our proof of Theorem~\ref{thm-boxfunddomain} occurs over the next five subsubsections and is structured as follows:
\begin{itemize}[leftmargin=20pt]
\item First, in Section~\ref{sec-redux}, we show that it suffices to construct the nested generalized Siegel sets $\mc{S}_1 \subset \mc{S}_2$ satisfying the properties (a)--(c) \mbox{enumerated above.}
\item Next, in Section~\ref{sec-s2}, we construct $\mc{S}_2$ in terms of a certain compact subset $\mc{N} \subset N(\R)$, and in Section~\ref{sec-choosen}, we make a convenient explicit choice for the set $\mc{N}$.
\item It then remains to construct $\mc{S}_1$, which we do using the aforementioned work of Grenier. Specifically, in Section~\ref{sec-grenier}, we recall the construction of Grenier's domain, and in Section~\ref{sec-s1}, we use his result to construct $\mc{S}_1$.
\end{itemize}

\subsubsection{Reduction to constructing $\mc{S}_1$ and $\mc{S}_2$} \label{sec-redux}

The following lemma reduces the problem of constructing the desired fundamental domain $\mc{F}$ into the simpler problem of constructing $\mc{S}_1$ and $\mc{S}_2$:

\begin{lemma}\label{lemmasiegelsand}
Let $\Lambda$ be a discrete subgroup of a Lie group $\mc{G}$ and denote by $\B(\mc{G})$ the Borel $\sigma$-algebra of $\mc{G}$. Suppose that $\mathcal{S}$ and $\mathcal{S}'$ are sets in $\B(\mc{G})$ with the property that the maps $\mathcal{S} \rightarrow \mc{G}/\Lambda$ and $\mathcal{S}' \rightarrow \mc{G}/\Lambda$ induced by $s \mapsto s\Lambda$ are, respectively, injective and surjective. Then there is a fundamental domain $\mathcal{F}$ in $\B(\mc{G})$ for the action of $\Lambda$ on $\mc{G}$ such that $\mathcal{S} \subset \mathcal{F} \subset \mathcal{S}'$.
\end{lemma}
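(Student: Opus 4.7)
The plan is to build the fundamental domain $\mathcal{F}$ by taking the given set $\mathcal{S}$ and adjoining a Borel transversal for the $\Lambda$-orbits that $\mathcal{S}$ fails to meet. Since $\mc{G}$ is a second-countable Lie group, every discrete subgroup is countable, so I would enumerate $\Lambda = \{\lambda_1, \lambda_2, \ldots\}$; then the saturation $\Lambda \mathcal{S} = \bigcup_i \mathcal{S} \lambda_i$ is a countable union of Borel sets and hence lies in $\B(\mc{G})$. Setting $T \defeq \mathcal{S}' \setminus \Lambda \mathcal{S}$, which is also Borel, the surjectivity of $\mathcal{S}' \to \mc{G}/\Lambda$ gives the dichotomy that each $\Lambda$-orbit in $\mc{G}$ meets either $\mathcal{S}$ or $T$, but not both.

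The next step is to produce a Borel subset $A \subset T$ meeting each $\Lambda$-orbit in $T$ exactly once. Granting this, I would set $\mathcal{F} \defeq \mathcal{S} \cup A$: this is Borel, and by the above dichotomy combined with the injectivity of $\mathcal{S} \to \mc{G}/\Lambda$, it meets every $\Lambda$-orbit exactly once. The inclusions $\mathcal{S} \subset \mathcal{F} \subset \mathcal{S}'$ then follow from $A \subset T \subset \mathcal{S}'$ together with $\mathcal{S} \subset \mathcal{S}'$, a hypothesis implicit in the conclusion of the lemma and automatically satisfied in our intended application.

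The hard part will be constructing $A$, that is, making a Borel selection of one representative per orbit inside $T$. My plan is to use that $\Lambda$ acts freely and properly discontinuously on $\mc{G}$, so that the quotient map $\pi \colon \mc{G} \to \mc{G}/\Lambda$ is a covering of second-countable manifolds. I would cover $\mc{G}/\Lambda$ by a countable family $\{V_n\}$ of open sets over each of which $\pi$ trivialises, with sheets indexed by $\Lambda$. For each $x \in V_n \cap \pi(T)$, the set of sheets meeting $T$ is non-empty and indexed by a Borel subset of $\Lambda$; I would select the sheet of smallest index and take its point over $x$, which gives a Borel local selection $V_n \cap \pi(T) \to T$. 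Disjointifying the $V_n$'s into a Borel partition of $\pi(T)$ and gluing these local selections yields the required transversal $A$. As a cleaner abstract alternative, one can simply apply the Luzin--Novikov measurable selection theorem to the countable-to-one Borel map $\pi|_T$. Either way, $\mathcal{F} = \mathcal{S} \cup A$ satisfies the stated properties.
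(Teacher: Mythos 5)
Your proposal is correct and follows essentially the same route as the paper's proof: the paper picks an open $U$ with $U^{-1}U \cap \Lambda = \{\on{id}\}$, covers $\mc{G}$ by countably many translates $g_nU$, and runs a greedy disjointification over these charts to produce the Borel transversal; your ``trivializing neighborhoods $V_n$ plus local sheet selection, then disjointify and glue'' is the same construction phrased in the language of covering spaces. (One trivial typo: you write $\Lambda\mathcal{S}$ where the right-action saturation $\mathcal{S}\Lambda$ is meant, as your own formula $\bigcup_i \mathcal{S}\lambda_i$ shows.)
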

\begin{proof} The argument is analogous to the proof of \cite[Lemma 4.1.1]{MR3307755}. Since $\Lambda$ is discrete, we can find a non-empty open subset $U \subset \mc{G}$ such that $U^{-1}U \cap \Lambda = \{\on{id}\}$. Since $\mc{G}$ is second countable, we can find a sequence of elements $\{g_n\} \subset \mc{G}$ such that $\mc{G} = \bigcup_{n=1}^\infty g_n U$. Let $\mathcal{S}''= \mathcal{S}' \smallsetminus \mathcal{S} \Lambda$ and set:
$$\mathcal{F}' = \bigcup_{n=1}^\infty \left(g_n U \cap \mathcal{S}'' \smallsetminus \bigcup_{i<n} (g_iU \cap \mathcal{S}'')\Lambda \right).$$
Lastly, define $\mathcal{F} = \mathcal{S} \cup \mathcal{F}'$ and note that this union is disjoint. Then $\mathcal{F} \in \B(\mc{G})$ since all the operations used in its construction keep us in the $\sigma$-algebra. It is simple to check that the induced map $\mathcal{F} \rightarrow \mc{G}/ \Lambda$ sending $x \mapsto x\Lambda$ is bijective. We conclude that $\mathcal{F}$ is a fundamental domain.
\end{proof}

\subsubsection{Constructing $\mc{S}_2$} \label{sec-s2}
We now construct $\mc{S}_2$ in terms of a certain compact subset $\mc{N} \subset N(\R)$, to be chosen explicitly in the next subsubsection. This construction is in essence due to Borel and Harish-Chandra (see~\cite[Section 9.2]{MR3156850} and~\cite[Section 4.1]{MR3719247}, which specialize the results of~\cite{MR0148666} and~\cite{MR147566} for semisimple groups to the case of the group $\on{SO}_{\mc{A}}$). By~\cite[Th\'{e}or\`{e}me~2.4 and Exemple~2.5]{MR0148666}, there exists a constant $c_2 > 0$ and a compact set $\mc{N} \subset N(\R)$ such that if we take $T_2 \defeq \{s = (s_1, \dots, s_{\lfloor \frac{n}{2}\rfloor}) \in T : \text{$s_i > c_2$ for all $i$}\}$, then the set
\begin{equation} \label{eq-defsiegel2}
\mc{S}_2 \defeq \mc{N} \, T_2 \, (\{\pm \on{id}\} \backslash K)
\end{equation}
meets every orbit of $\on{O}_{\mc{A}}(\Z)$ on $\on{O}_{\mc{A}}(\R)$ at least once (hence satisfying property (b) above), where $\{\pm \on{id}\} \backslash K$ denotes some strict fundamental domain for the action of the group $\{\pm \on{id}\}$ by left-multiplication on $K$ (where ``strict'' means that every coset of $\{\pm \on{id}\}$ has a unique representative).\footnote{\emph{A priori},~\cite[Th\'{e}or\`{e}me~2.4]{MR0148666} states that a finite number $\sigma$ of translates of Siegel sets can be found such that their union meets every orbit of $\on{O}_{\mc{A}}(\Z)$ on $\on{O}_{\mc{A}}(\R)/K$ at least once; here, $\sigma = \#\big(\on{O}_{\mc{A}}(\Z)\backslash \on{O}_{\mc{A}}(\Q)/\mc{P}(\Q)\big)$. But since the algebraic group $\on{O}_{\mc{A}}$ has class number $1$, it follows from~\cite[Propositions~5.4 and~5.10]{MR1278263} that $\sigma = 1$.}

\subsubsection{Choosing $\mc{N}$} \label{sec-choosen}

Having constructed $\mc{S}_2$ in terms of $\mc{N}$, we now make an explicit choice of $\mc{N}$ that will be convenient in what follows. Let $\ol{\mc{N}} \subset N(\R)$ be the subset defined as follows:
\begin{equation*}
\ol{\mc{N}} \defeq \begin{cases} \{u \in N(\R) : {\vert}u_{ij}{\vert} \leq 1 \text{ for $i = \lceil \tfrac{n}{2} \rceil$, ${\vert}u_{ij}{\vert} \leq \tfrac{1}{2}$ for $i \neq \lceil \tfrac{n}{2}\rceil$}\},\hspace*{0pt} & \text{if $n$ is odd,} \\ \{u \in N(\R) : {\vert}u_{ij}{\vert} \leq \tfrac{1}{2} \text{ for all $i,j$}\},\hspace*{0pt} & \text{if $n$ is even.} \end{cases}
\end{equation*}
The following lemma implies that by chopping $\mc{N}$ into pieces and translating them via elements of $N(\R) \cap \on{O}_{\mc{A}}(\Z)$, we can replace $\mc{N}$ with a subset of $\ol{\mc{N}}$:
\begin{lemma} \label{lem-chopandmove}
Let $u \in N(\R)$. Then there exists $\ol{u} \in N(\R) \cap \on{O}_{\mc{A}}(\Z)$ such that $\ol{u}u \in \ol{\mc{N}}$. Moreover, there exists an open subset $\mc{U}_3 \subset N(\R)$ of full measure such that for any $u \in \mc{U}_3$, there is precisely one element $\ol{u} \in N(\R) \cap \on{O}_{\mc{A}}(\Z)$ such that $\ol{u}u \in \ol{\mc{N}}$.
\end{lemma}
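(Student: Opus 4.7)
The plan is to show that $\ol{\mc{N}}$ is, modulo a measure-zero boundary, a strict fundamental domain for left-multiplication by $N(\R)\cap \on{O}_{\mc{A}}(\Z)$ on $N(\R)$. The key computational input is that the product satisfies $(vu)_{ij} = v_{ij} + u_{ij} + \sum_{j < k < i} v_{ik}u_{kj}$, so the $(i,j)$-coordinate of $vu$ is an additive shift of $u_{ij}$ by $v_{ij}$ modulo a cross-term depending only on entries of $v$ at positions with strictly smaller row--column difference $i - k < i - j$. This triangular structure suggests processing the free coordinates of $u$ in order of increasing $i - j$.

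For existence, I construct $\ol{u}$ iteratively as a product $\ol{u} = \ol{v}^{(D)} \cdots \ol{v}^{(1)}$, where $\ol{v}^{(d)} \in N(\R) \cap \on{O}_{\mc{A}}(\Z)$ has all free coordinates zero except possibly those at the diagonal level $i - j = d$. At stage $d$, the cross-term structure ensures that the previously reduced coordinates (at levels $< d$) remain unchanged, and each level-$d$ free coordinate of the accumulated product shifts by the corresponding free coordinate of $\ol{v}^{(d)}$. The $\on{O}_{\mc{A}}$-integrality condition on $\ol{v}^{(d)}$ forces the polynomially determined non-free entries (see~\eqref{eq-uform1}--\eqref{eq-uform2}) to be integral; these polynomials have integer coefficients \emph{except} for the $-\tfrac{1}{2}(u_{\lceil n/2 \rceil, j})^2$ terms appearing in~\eqref{eq-uform1}, which force $v_{\lceil n/2 \rceil, j} \in 2\Z$ when $n$ is odd. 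This is precisely why the defining inequality in $\ol{\mc{N}}$ is $|u_{ij}| \leq 1$ along the middle row for odd $n$ (where the lattice acts by even-integer translations, making $[-1,1]$ a fundamental interval) and $|u_{ij}| \leq \tfrac{1}{2}$ otherwise; for even $n$, inspection of~\eqref{eq-uform2} shows no $\tfrac{1}{2}$-coefficients, giving the uniform bound. Choosing at each stage the unique shift that lands the coordinate in its bounding interval then produces the required $\ol{u}$.

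For uniqueness on a full-measure set, I take $\mc{U}_3$ to consist of all $u \in N(\R)$ whose reduction into $\ol{\mc{N}}$ lies strictly in the open interior; this set is open, and its complement, being contained in the preimage of the boundary of $\ol{\mc{N}}$, has measure zero. Given $u \in \mc{U}_3$ and $\ol{u}_1, \ol{u}_2 \in N(\R) \cap \on{O}_{\mc{A}}(\Z)$ with $\ol{u}_1 u,\, \ol{u}_2 u \in \ol{\mc{N}}$, set $w \defeq \ol{u}_2 \ol{u}_1^{-1}$. Comparing free coordinates of $w(\ol{u}_1 u)$ and $\ol{u}_1 u$ in order of increasing $i - j$ and using strict interiority at each level, the level-$d$ free coordinates of $w$ must vanish at each stage, yielding $w = \on{id}$.

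\textbf{Main obstacle:} The principal difficulty is the combinatorial bookkeeping for $\on{O}_{\mc{A}}$-integrality of $\ol{v}^{(d)}$: one must verify that the $-\tfrac{1}{2}(\cdot)^2$ terms in the middle row of~\eqref{eq-uform1} are the \emph{only} source of half-integer coefficients in the polynomial expressions for entries below the anti-diagonal of a generic element of $N(\R)$, and that the cross-term in the iterative multiplication interacts cleanly with the chosen level filtration on free coordinates, so that reducing at level $d$ really does leave all strictly lower levels untouched.
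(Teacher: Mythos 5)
Your proof is correct and takes essentially the same approach as the paper: both construct $\ol{u}$ inductively by the row--column difference $i-j$, using the unipotent cross-term structure to lock in each diagonal level without disturbing the lower ones, and both handle uniqueness by reducing to interior points of $\ol{\mc{N}}$ and taking $\mc{U}_3$ to be the union of the $(N(\R)\cap\on{O}_{\mc{A}}(\Z))$-translates of that interior. Your explicit formula for the cross-term and your accounting of the $2\Z$-translations on the middle row when $n$ is odd (explaining the $|u_{ij}|\le 1$ bound in $\ol{\mc{N}}$) simply make explicit what the paper leaves implicit in its reference to the parametrization of Figure~\ref{fig-bigmats}.
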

\begin{proof}
We construct $\ol{u}$ inductively. Upon inspecting the coordinate system on $N$ provided in Section~\ref{sec-coord}, we arrive at the following observation: if $k \in \{0, \dots, n-3\}$ is an integer and $u' \in N(\R)$ is an element such that $u'_{ij} = 0$ for all $i,j$ such that $i - j \leq k$, then \mbox{$(u'u)_{ij} = u'_{ij} + u_{ij}$} for all $i,j$ such that $i - j = k+1$. By the observation, we may choose $u_1 \in N(\R) \cap \on{O}_{\mc{A}}(\Z)$ such that $\big{\vert}(u_1)_{i(i-1)} + u_{i(i-1)}\big{\vert} \leq \frac{1}{2}$ for each $i \in \{2, \dots, \lfloor \frac{n}{2} \rfloor\}$ and such that, for $n$ odd, we have \mbox{$\big{\vert}(u_1)_{\lceil \frac{n}{2} \rceil \lfloor \frac{n}{2} \rfloor} + u_{\lceil \frac{n}{2} \rceil \lfloor \frac{n}{2}\rfloor}\big{\vert} \leq 1$.} Suppose for some \mbox{$k \in \{0, \dots, n-4\}$} we have chosen \mbox{$u_\ell \in N(\R) \cap \on{O}_{\mc{A}}(\Z)$} for each $\ell \in \{1, \dots, k+1\}$; then, by the observation, we may choose \mbox{$u_{k+2} \in N(\R) \cap \on{O}_{\mc{A}}(\Z)$} such that $\big{\vert}(u_1)_{i(i-k-2)} + u_{i(i-k-2)}\big{\vert} \leq \frac{1}{2}$ for each $i \in \{k+3, \dots, \lfloor \frac{n+k+2}{2}\rfloor\}$, unless $n$ is odd and $i = \lceil \frac{n}{2} \rceil$, in which case we can only arrange for \mbox{$\big{\vert}(u_1)_{\lceil \frac{n}{2} \rceil (\lceil \frac{n}{2}\rceil-k-2)} + u_{\lceil \frac{n}{2} \rceil (\lceil \frac{n}{2}\rceil-k-2)}\big{\vert} \leq 1$.} Having constructed $u_\ell$ for each $\ell \in \{1, \dots, n-2\}$, we then take $\ol{u} = \prod_{\ell=1}^{n-2}u_{n-2-\ell}$.

As for uniqueness on an open subset of full measure, it suffices to show that $\ol{u}$ is unique when $u$ lies in the interior of $\ol{\mc{N}}$, for then we can take $\mc{U}_3$ to be the union of all $(N(\R) \cap \on{O}_{\mc{A}}(\Z))$-translates of the interior of $\ol{\mc{N}}$. So take $u \in \ol{\mc{N}}$. If $\ol{u}u \in \ol{\mc{N}}$ for some $\ol{u} \in N(\R) \cap \on{O}_{\mc{A}}(\Z)$, then the aforementioned observation, together with the fact that $u$ lies in the interior of $\ol{\mc{N}}$, implies that $\ol{u}_{i(i-1)} = 0$ for each $i$. Proceeding inductively as we did to prove existence, we find that $\ol{u} = \on{id}$.
\end{proof}
 By Lemma~\ref{lem-chopandmove}, we may assume that $\mc{N} \subset \ol{\mc{N}}$. We next show that $\mc{N}$ can be chosen to lie within an even smaller subset of $\ol{\mc{N}}$. Let $\Gamma \subset \on{O}_{\mc{A}}(\Z)$ denote the subgroup of diagonal matrices with integer entries. One readily verifies that $\Gamma$ satisfies the following properties:
\begin{itemize}[leftmargin=20pt]
\item $\Gamma$ is a subgroup of $K$ of order $2^{\lceil \frac{n}{2}\rceil}$ centralizing $T$; \item Conjugation by elements of $\Gamma$ defines a group action on $\ol{\mc{N}}$ with the property that for any $\rho \in \Gamma$ and $u \in \ol{\mc{N}}$, we have ${\vert}(\rho \cdot u)_{ij}{\vert} = {\vert}u_{ij}{\vert}$ \mbox{for all $i,j$; and}
\item The orbit of every element of $\ol{\mc{N}}$ under the action of $\Gamma$ has a representative $u$ such that for every $n$ we have $u_{i(i-1)} \in [0, \tfrac{1}{2}]$ for each $i \in \{2, \dots, \lfloor \frac{n}{2}\rfloor\}$ and such that for odd $n$ we have $u_{\lceil \frac{n}{2} \rceil \lfloor \frac{n}{2} \rfloor} \in [-1,-\tfrac{1}{2}] \cup [0,\tfrac{1}{2}]$. The representative $u$ is unique if each $u_{ij}$ lies in the interior of the corresponding interval or union of intervals.
\end{itemize}
It follows that we can take $\mc{N}$ to lie within the subset
$$\wt{\mc{N}} \defeq \left\{u \in \ol{\mc{N}} : \begin{array}{c} u_{i(i-1)} \in [0,\tfrac{1}{2}] \text{ for each $i \in \{2, \dots, \lfloor \tfrac{n}{2} \rfloor\}$}; \\ \text{$u_{\lceil \frac{n}{2} \rceil \lfloor \frac{n}{2} \rfloor} \in [-1, -\tfrac{1}{2}] \cup [0, \tfrac{1}{2}]$ for odd $n$} \end{array}\right\}$$
By possibly expanding $\mc{N}$, we may in fact choose $\mc{N}$ to be equal to $\wt{\mc{N}}$.

\subsubsection{Grenier's domain} \label{sec-grenier}

 In the following subsubsection, we shall deduce the construction of the set $\mc{S}_1$ from a slight reformulation of Grenier's explicit box-shaped fundamental domain $\mathscr{F}$ for the action of $\on{SL}_n^{\pm}(\Z)$ on $\on{SL}_n^{\pm}(\R)$ (see~\cite{MR1227503}). To state this reformulation, we must introduce some notation. Let $\mathscr{N} \subset \on{SL}_n^{\pm}(\R)$ denote the subgroup of lower-triangular unipotent matrices; for an element $u \in \mathscr{N}$, we denote by $u_{ij}$ the row-$i$, column-$j$ entry of $u$. Let $\ol{\mathscr{N}}$ be the set defined by
 $$\ol{\mathscr{N}} \defeq \{u \in \mathscr{N} : {\vert}u_{ij}{\vert} \leq \tfrac{1}{2} \text{ for all $i,j$}\}$$
 Let $\mathscr{T} \subset \on{SL}_n^{\pm}(\R)$ denote the subgroup of diagonal matrices with positive entries; for an element $s \in \mathscr{T}$, we denote by $s_i$ the quotient of the row-$(i+1)$, column-$(i+1)$ entry of $s$ by the row-$i$, column-$i$ entry. Let $\mathscr{K} \subset \on{SL}_n^{\pm}(\R)$ denote a maximal compact subgroup containing $K$.

 Consider the subset $\mathscr{N}' \subset \ol{\mathscr{N}}$ defined as follows:
 $$\mathscr{N}' \defeq \left\{u \in \ol{\mathscr{N}} : \begin{array}{c} u_{i(i-1)} \in [0,\tfrac{1}{2}] \text{ for each $i \in \{2, \dots, \lceil \tfrac{n}{2} \rceil\}$;}  \\ \text{$u_{i(i-1)} \in [-\tfrac{1}{2},0]$ for each $i \in \{\lceil \tfrac{n}{2} \rceil+2,\dots,n\}$;} \\ \text{$u_{(\lceil \frac{n}{2} \rceil+1)\lceil \frac{n}{2} \rceil} \in [-\tfrac{1}{2},0]$ if $2 \nmid n$, $u_{(\frac{n+2}{2})\frac{n}{2}} \in [-\tfrac{1}{2},\tfrac{1}{2}]$ if $2 \mid n$} \end{array}\right\}$$
 Let $\mathscr{T}' \defeq \{s \in \mathscr{T} : s_i > c_1 \text{ for all $i$}\}$ for a sufficiently large constant $c_1 > c_2$. Let $\varepsilon_n \in \mathscr{K}$ denote the identity matrix when $n$ is odd, and the diagonal matrix whose diagonal entries are given by $\frac{n}{2}$ copies of $-1$ followed by $\frac{n}{2}$ copies of $1$ when $n$ is even, and let $\{\pm \on{id},\pm \varepsilon_n\} \backslash \mathscr{K}$ denote some strict fundamental domain for the action of the group $\{\pm \on{id}, \pm \varepsilon_n\}$ \mbox{by left-multiplication on $K$.}

 We are now in position to state our reformulation of Grenier's result:
 \begin{lemma} \label{thm-whatgreniersays}
 For every sufficiently large $c_1 > 0$, the following property holds: If
 $$us\theta,u's'\theta' \in \mathscr{N}' \, \mathscr{T}' \, (\{\pm \on{id}, \pm \varepsilon_n\} \backslash \mathscr{K})$$
 are $\on{SL}_n^{\pm}(\Z)$-equivalent elements such that $s_i,s_i' > c_1$ for all $i$ and such that $u,u'$ lie in the interior of $\mathscr{N}'$, then $us\theta = u's'\theta'$.
 \end{lemma}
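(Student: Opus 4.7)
The plan is to deduce Lemma~\ref{thm-whatgreniersays} from Grenier's theorem~\cite{MR1227503}, interpreted as producing a box-shaped fundamental domain $\mathscr{F}_{\on{Gr}}$ for $\on{SL}_n^{\pm}(\Z)$ on $\on{SL}_n^{\pm}(\R)$. In terms of the Iwasawa decomposition $\on{SL}_n^{\pm}(\R) = \mathscr{N}\mathscr{T}\mathscr{K}$, Grenier shows that for sufficiently large $c_1$, the intersection of $\mathscr{F}_{\on{Gr}}$ with the cusp $\{s_i > c_1\}$ is cut out by explicit inequalities on the $u_{ij}$ and by a strict fundamental domain for a finite group acting on $\mathscr{K}$, and moreover each $\on{SL}_n^{\pm}(\Z)$-orbit meets the interior of this set exactly once. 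The content of Lemma~\ref{thm-whatgreniersays} is that the resulting box coincides with $\mathscr{N}'\mathscr{T}'(\{\pm \on{id}, \pm \varepsilon_n\} \backslash \mathscr{K})$; once this identification is made, the injectivity statement in the interior is immediate from the fundamental-domain property of $\mathscr{F}_{\on{Gr}}$.

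The first step would be to translate Grenier's conventions into the Iwasawa coordinates used in this excerpt. Grenier works with the symmetric space $\on{SL}_n^{\pm}(\R)/\mathscr{K}$, identified with the space of positive-definite symmetric matrices of determinant $1$ via $g \mapsto g^t g$; under this identification, $us \mapsto s^t u^t u s$. Expanding this product in the cusp, where the off-diagonal contributions are small compared to the diagonal, yields Gram--Schmidt-type reduction conditions on the $u_{ij}$. These conditions take the form $|u_{ij}| \leq \tfrac{1}{2}$ for most entries, and after further quotienting by the extra $\Z/2\Z$-symmetries generated by $\varepsilon_n$ (and, when $n$ is odd, by $-\on{id}$), they are halved in exactly the pattern defining $\mathscr{N}'$. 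A direct matrix computation using the explicit form of $\varepsilon_n$, which differs by the parity of $n$, accounts for the parity-dependent rule on the middle slot $u_{(\lceil n/2 \rceil + 1)\lceil n/2 \rceil}$.

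The second step is to lift back to $\on{SL}_n^{\pm}(\R)$. The generic stabilizer in $\on{SL}_n^{\pm}(\Z)$ of a point on the symmetric space is exactly the subgroup $\{\pm \on{id}, \pm \varepsilon_n\}$; multiplying the box on the right by a strict fundamental domain $\{\pm \on{id}, \pm \varepsilon_n\}\backslash \mathscr{K}$ for this group acting on $\mathscr{K}$ by left multiplication then produces a fundamental domain for $\on{SL}_n^{\pm}(\Z) \curvearrowright \on{SL}_n^{\pm}(\R)$, whose intersection with the cusp is precisely $\mathscr{N}'\mathscr{T}'(\{\pm \on{id}, \pm \varepsilon_n\} \backslash \mathscr{K})$. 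The lemma follows because $\on{SL}_n^{\pm}(\Z)$-equivalent interior points of a fundamental domain must be equal.

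The main obstacle is the first step: the explicit coordinate-bookkeeping needed to match Grenier's conventions against the asymmetric intervals in the definition of $\mathscr{N}'$, particularly the parity-dependent convention for the middle slot. This is essentially a routine but delicate matrix calculation, controlled by the block structure of $\varepsilon_n$ and by the explicit form of the Gram--Schmidt reduction in the cusp.
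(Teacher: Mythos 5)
Your overall plan --- invoke Grenier's domain and conclude by the fundamental-domain injectivity property --- is the right strategy and the one the paper uses, but a central step in your argument rests on a claim that is false, and the mechanism by which $\varepsilon_n$ enters is misidentified.

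You assert that ``the generic stabilizer in $\on{SL}_n^{\pm}(\Z)$ of a point on the symmetric space is exactly the subgroup $\{\pm\on{id},\pm\varepsilon_n\}$,'' and you use this to justify multiplying Grenier's box on the right by $\{\pm\on{id},\pm\varepsilon_n\}\backslash\mathscr{K}$. This is incorrect: the stabilizer in $\on{SL}_n^{\pm}(\Z)$ of a generic coset $g\mathscr{K}$ is $\on{SL}_n^{\pm}(\Z)\cap g\mathscr{K}g^{-1}$, and since $\varepsilon_n$ is not central, $g^{-1}\varepsilon_n g\notin\mathscr{K}$ for generic $g$. The generic stabilizer is therefore $\{\pm\on{id}\}$, so the lift of Grenier's $\ol{\mathscr{F}}\subset\mathscr{N}\mathscr{T}$ to $\on{SL}_n^{\pm}(\R)$ is $\ol{\mathscr{F}}\cdot(\{\pm\on{id}\}\backslash\mathscr{K})$, with no $\varepsilon_n$ appearing at this stage.

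The actual role of $\varepsilon_n$ is different and runs in the opposite direction from what your proposal suggests. Grenier's Theorem~1 in~\cite{MR1227503} gives the inclusion $\mathscr{N}''\mathscr{T}'\subset\ol{\mathscr{F}}$, where $\mathscr{N}''=\{u\in\mathscr{N}':u_{(\frac{n+2}{2})\frac{n}{2}}\in[0,\tfrac12]\}$ is the \emph{half} of $\mathscr{N}'$ with nonnegative middle slot. Since $\varepsilon_n\in\on{SL}_n^{\pm}(\Z)\cap\mathscr{K}$ is diagonal and commutes with $\mathscr{T}$, conjugation by $\varepsilon_n$ flips the sign of the middle slot $u_{(\frac{n+2}{2})\frac{n}{2}}$ (the row index lies in the $+1$ block and the column index in the $-1$ block) while leaving the subdiagonal constraints unchanged, so $\mathscr{N}''\cup\varepsilon_n\mathscr{N}''\varepsilon_n=\mathscr{N}'$. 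Enlarging from $\mathscr{N}''$ to $\mathscr{N}'$ thus doubles the $N$-factor; the compensating halving is exactly the replacement of $\{\pm\on{id}\}\backslash\mathscr{K}$ by the smaller $\{\pm\on{id},\pm\varepsilon_n\}\backslash\mathscr{K}$. Concretely: writing $\mathscr{K}'=\{\pm\on{id},\pm\varepsilon_n\}\backslash\mathscr{K}$ so that $\mathscr{K}'\sqcup\varepsilon_n\mathscr{K}'$ represents $\{\pm\on{id}\}\backslash\mathscr{K}$, one has $\mathscr{N}'\mathscr{T}'\mathscr{K}'=(\mathscr{N}''\mathscr{T}'\mathscr{K}')\sqcup\varepsilon_n(\mathscr{N}''\mathscr{T}'\varepsilon_n\mathscr{K}')$, and both $\mathscr{N}''\mathscr{T}'\mathscr{K}'$ and $\mathscr{N}''\mathscr{T}'\varepsilon_n\mathscr{K}'$ sit inside $\ol{\mathscr{F}}\cdot(\{\pm\on{id}\}\backslash\mathscr{K})$ while being almost disjoint, so two interior points of $\mathscr{N}'\mathscr{T}'\mathscr{K}'$ that are $\on{SL}_n^{\pm}(\Z)$-equivalent would give two distinct equivalent interior points of $\ol{\mathscr{F}}$, a contradiction. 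Your phrase ``quotienting by the extra $\Z/2\Z$-symmetries \dots they are halved in exactly the pattern defining $\mathscr{N}'$'' has the sign backwards: $\mathscr{N}'$ is the \emph{larger} set obtained by doubling Grenier's $\mathscr{N}''$, not a quotient of it. Without the correct $\varepsilon_n$-conjugation bookkeeping, the second step of your argument does not close.
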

 \begin{proof}
 In~\cite{MR934172}, Grenier constructs a fundamental domain $\ol{\mathscr{F}} \subset \mathscr{N} \mathscr{T}$ for the action of $\on{SL}_n^{\pm}(\Z)$ on $\on{SL}_n^{\pm}(\R)/\mathscr{K}$. The domain $\ol{\mathscr{F}}$ has the property that no two points in its interior are $\on{SL}_n^{\pm}(\Z)$-equivalent. In~\cite[Theorem~1]{MR1227503}, Grenier establishes that, for every sufficiently large $c_1 > 0$, we have $\mathscr{N}''  \mathscr{T}' \subset \ol{\mathscr{F}}$, where $\mathscr{N}'' = \{u \in \mathscr{N}' : u_{(\frac{n+2}{2})\frac{n}{2}} \in [0,\frac{1}{2}]\}$. Consequently, the set $\ol{\mathscr{F}}  (\{\pm \on{id}\} \backslash \mathscr{K})$ is a fundamental domain for the action of $\on{SL}_n^{\pm}(\Z)$ on $\on{SL}_n^{\pm}(\R)$ containing $\mathscr{N}''  \mathscr{T}'  (\{\pm \on{id}\} \backslash \mathscr{K})$. Since $\varepsilon_n  \mathscr{N}''  \varepsilon_n = \mathscr{N}'$, it follows that there is a fundamental domain for the action of $\on{SL}_n^{\pm}(\Z)$ on $\on{SL}_n^{\pm}(\R)$ containing $\mathscr{N}'  \mathscr{T}'  (\{\pm \on{id}, \pm \varepsilon_n\} \backslash \mathscr{K})$. If we have two distinct $\on{SL}_n^{\pm}(\Z)$-equivalent elements $us\theta,u's'\theta' \in \mathscr{N}'  \mathscr{T}'  (\{\pm \on{id}, \pm \varepsilon_n\} \backslash \mathscr{K})$ such that $u,u'$ lie in the interior of $\mathscr{N}'$, then one can find two distinct elements of the interior of $\ol{\mathscr{F}}$ that are $\on{SL}_n^{\pm}(\Z)$-equivalent, which is a contradiction.
 \end{proof}

 \subsubsection{Constructing $\mc{S}_1$} \label{sec-s1}

 Let $T_1 \defeq \{s \in T : s_i > c_1 \text{ for all $i$}\}$, where the constant $c_1 > 0$ is to be chosen shortly. We then take $$\mc{S}_1 \defeq \mc{N} \, T_1 \, (\{\pm \on{id}\} \backslash K).$$
It is evident that $\mc{S}_1$ and $\mc{S}_2$ sastisfy property (c) above. The following lemma states that we can choose $c_1$ so that $\mc{S}_1$ lies within $\mc{S}_2$ and satisfies property (a) above:
\begin{lemma} \label{thm-34cubic}
There exists a constant $c_1 > 0$ for which $(1)$ $\mc{S}_1 \subset \mc{S}_2$ and $(2)$ there exists an open subset $\,\mc{U}_1 \subset \mc{S}_1$ of full measure such that every orbit of $\on{O}_{\mc{A}}(\Z)$ \mbox{on $\on{O}_{\mc{A}}(\R)$ meets $\,\mc{U}_1$ at most once.}
\end{lemma}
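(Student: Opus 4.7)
The plan is to choose $c_1$ large enough that two conditions hold simultaneously: $\mc{S}_1\subset\mc{S}_2$, and almost every $\on{O}_{\mc{A}}(\Z)$-orbit meets $\mc{S}_1$ at most once. Part~(1) is automatic once $c_1\geq c_2$: then $T_1\subset T_2$, and since $\mc{S}_i=\mc{N}\,T_i\,(\{\pm\on{id}\}\backslash K)$ for $i\in\{1,2\}$ share the same $\mc{N}$ and $K$ factors, the inclusion $\mc{S}_1\subset\mc{S}_2$ follows. Part~(2) will be deduced from Grenier's lemma (Lemma~\ref{thm-whatgreniersays}) by comparison with the ambient reduction theory for $\on{SL}_n^{\pm}(\Z)$ acting on $\on{SL}_n^{\pm}(\R)$.

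For part~(2), enlarge $c_1$ further so that any $s\in T_1$, viewed as an element of the ambient torus $\mathscr{T}\subset\on{SL}_n^{\pm}(\R)$, has all Grenier coordinates $t_{i+1}/t_i$ exceeding the threshold in Lemma~\ref{thm-whatgreniersays}. This is possible because the change-of-coordinates formulas \eqref{eq-sodd}--\eqref{eq-seven} express each such ratio as a monomial in the $s_k^{\pm 1}$'s of uniform sign, hence large whenever every $s_k$ is large. Define
\[
\mc{U}_1 \;\defeq\; \bigl(\on{int}(\mc{N})\cap\mc{U}_3\bigr)\cdot T_1\cdot\on{int}\bigl(\{\pm\on{id}\}\backslash K\bigr)\;\subset\;\mc{S}_1,
\]
where $\mc{U}_3$ is as in Lemma~\ref{lem-chopandmove}; this is open of full measure. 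Suppose $g_1=u_1s_1\theta_1,\,g_2=u_2s_2\theta_2\in\mc{U}_1$ satisfy $g_2=\gamma g_1$ for some $\gamma\in\on{O}_{\mc{A}}(\Z)$. By the reduction theory for $\on{SL}_n^{\pm}(\Z)$ applied to the Grenier-type region $\mathscr{N}'\mathscr{T}'(\{\pm\on{id},\pm\varepsilon_n\}\backslash\mathscr{K})$, we can find, for each $i$, elements $v_i\in\mathscr{N}\cap\on{SL}_n^{\pm}(\Z)$ and $\tau_i\in\{\pm\on{id},\pm\varepsilon_n\}$ such that, setting $\eta_i\defeq\tau_iv_i\in\on{SL}_n^{\pm}(\Z)$,
\[
g_i''\;\defeq\;\eta_ig_i\;=\;(\tau_iv_iu_i\tau_i^{-1})\,s_i\,(\tau_i\theta_i)\;\in\;\mathscr{N}'\,\mathscr{T}'\,(\{\pm\on{id},\pm\varepsilon_n\}\backslash\mathscr{K});
\]
here the $v_i$'s reduce unipotent entries into $[-\tfrac12,\tfrac12]$ while the $\tau_i$'s simultaneously pin signs in $\mathscr{N}'$ and place $\tau_i\theta_i$ in the strict fundamental domain of $\mathscr{K}$. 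Since $\gamma,\eta_1,\eta_2\in\on{SL}_n^{\pm}(\Z)$, the elements $g_1''$ and $g_2''$ are $\on{SL}_n^{\pm}(\Z)$-equivalent, and since $u_i\in\on{int}(\mc{N})$ forces $\tau_iv_iu_i\tau_i^{-1}$ into the interior of $\mathscr{N}'$, Lemma~\ref{thm-whatgreniersays} gives $g_1''=g_2''$.

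Comparing Iwasawa factors in $\on{SL}_n^{\pm}(\R)$ yields $s_1=s_2$, $\tau_1\theta_1=\tau_2\theta_2$, and $\tau_1v_1u_1\tau_1^{-1}=\tau_2v_2u_2\tau_2^{-1}$. The second equation forces $\tau_2^{-1}\tau_1=\theta_2\theta_1^{-1}\in K$; but the computation $\varepsilon_n\mc{A}\varepsilon_n=-\mc{A}\neq\mc{A}$ shows $\varepsilon_n\notin\on{O}_{\mc{A}}(\R)\supset K$, so $\tau_2^{-1}\tau_1\in\{\pm\on{id}\}$, and then $\theta_1=\theta_2$ since both lie in the strict fundamental domain $\{\pm\on{id}\}\backslash K$. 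With $\tau_1=\tau_2$, the third equation collapses to $v_1u_1=v_2u_2$, whence $\delta\defeq v_2^{-1}v_1=u_2u_1^{-1}\in N(\R)\cap\mathscr{N}\cap\on{SL}_n^{\pm}(\Z)=N(\Z)\subset\on{O}_{\mc{A}}(\Z)$. The uniqueness clause of Lemma~\ref{lem-chopandmove}, applied to $u_1\in\mc{U}_3$, forces $\delta=\on{id}$, giving $u_1=u_2$ and hence $g_1=g_2$. The main obstacle is the construction of the translating elements $\eta_i$: one must verify that a single sign element $\tau_i\in\{\pm\on{id},\pm\varepsilon_n\}$ can both place $\tau_i\theta_i$ in the strict fundamental domain of $\mathscr{K}$ and, together with an appropriate $v_i$, bring $\tau_iv_iu_i\tau_i^{-1}$ into the sign-constrained region $\mathscr{N}'$—i.e., that the discrete symmetries used on the $\mathscr{K}$-side are compatible with those used on the $\mathscr{N}$-side of Grenier's decomposition.
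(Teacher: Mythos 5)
You take the paper's route: choose $c_1\geq c_2$ for part~(1); for part~(2), translate two $\on{O}_\AA(\Z)$-equivalent elements of $\mc{U}_1$ into Grenier's region for $\on{SL}_n^\pm$, apply Lemma~\ref{thm-whatgreniersays}, and unwind via Iwasawa uniqueness. The ending of the argument — factoring the Iwasawa data, $\varepsilon_n\notin\on{O}_\AA(\R)\supset K$, strictness of $\{\pm\on{id}\}\backslash K$, and uniqueness in Lemma~\ref{lem-chopandmove} — is correct and matches the paper. Two earlier steps, however, need to be completed rather than merely flagged.

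The concern you raise at the end is real but resolvable, and should be resolved rather than left as a remark. The choices of $\tau_i$ and $v_i$ are made independently, not simultaneously: first take $\tau_i$ to be the element of $\{\pm\on{id},\pm\varepsilon_n\}$ for which $\tau_i\theta_i\in\{\pm\on{id},\pm\varepsilon_n\}\backslash\mathscr{K}$; then take $v_i\in\mathscr{N}\cap\on{SL}_n^\pm(\Z)$ reducing $\tau_iu_i\tau_i^{-1}$ into $\mathscr{N}'$, which exists by the $\on{SL}_n^\pm$-analogue of Lemma~\ref{lem-chopandmove}. No further constraint is imposed on $\tau_i$ by the sign pattern of $\mathscr{N}'$: the subdiagonal of any $u\in N(\R)$ obeys $u_{(n+2-i)(n+1-i)}=-u_{i(i-1)}$ (read off Figure~\ref{fig-bigmats}), so an element of $\wt{\mc{N}}$ already carries the $\mathscr{N}'$-sign pattern up to $\mathscr{N}(\Z)$-translation, and conjugation by any $\tau\in\{\pm\on{id},\pm\varepsilon_n\}$ preserves $\mathscr{N}'$. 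Separately, and unflagged: $\mc{U}_1=(\on{int}(\mc{N})\cap\mc{U}_3)\cdot T_1\cdot\on{int}(\{\pm\on{id}\}\backslash K)$ is too large. Lemma~\ref{thm-whatgreniersays} requires $\tau_iv_iu_i\tau_i^{-1}$ to lie in the \emph{interior} of $\mathscr{N}'$, but $\mc{U}_3$ controls only the subdiagonal of $u$, and an off-subdiagonal $\on{SL}_n$-unipotent coordinate of $u_i$ — a polynomial in the free $N(\R)$-coordinates — can have fractional part exactly $\tfrac12$, in which case every $\mathscr{N}(\Z)$-translate sits on $\partial\mathscr{N}'$ and Grenier's lemma does not apply. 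You need to shrink $\mc{U}_1$ by further intersecting with the full-measure open set of those $u$ for which no $\mathscr{N}(\Z)$-translate has any off-diagonal entry equal to a half-integer (except the $(\frac{n+2}{2},\frac{n}{2})$ entry, which is identically zero); this is the additional restriction the paper builds into its $\mc{U}_1$. With it in place, your chain of deductions closes.
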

\begin{proof}
  Let $\mc{U}_1$ be an open subset of full measure contained in the interior of $\mc{S}_1$ consisting of elements $us\theta$ satisfying the following two properties:
 \begin{itemize}[leftmargin=20pt]
     \item The stabilizer under left-multiplication by $\on{O}_{\mc{A}}(\Z)$ of $us\theta \in \on{O}_{\mc{A}}(\R)/K$ is given by $\{\pm \on{id}\}$; and
     \item There exists a unique element $u_0 \in \mathscr{N} \cap \on{SL}_n^{\pm}(\Z)$ such that $u_0u$ lies in the interior of $\mathscr{N}'$.
 \end{itemize}
   To see why we can arrange for the first property above to hold on a open subset of full measure, we use the following lemma:
   \begin{lemma} \label{lem-stabismall}
 There exists an open subset $\mc{U} \subset \on{O}_{\AA}(\R)/K$ of full measure such that the stabilizer in $\on{O}_{\AA}(\Z)$ of any $g \in \mc{U}$ is given by $K \cap \{\pm \on{id}\}$.
 \end{lemma}
 \begin{proof}[Proof of Lemma~\ref{lem-stabismall}]
 Let $\ol{\mc{F}}$ be any fundamental domain for $\on{O}_{\AA}(\Z)$ on $\on{O}_{\AA}(\R)/K$, and let $g$ be an element of the interior of $\ol{\mc{F}}$. If $\gamma \in \on{O}_{\AA}(\Z)$ stabilizes $g$, then there is an open neighborhood $U \ni g$ contained in the interior of $\ol{\mc{F}}$ such that $\gamma \cdot U$ is contained in the interior of $\ol{\mc{F}}$, implying in fact that $\gamma$ stabilizes every element of $U$. Since left-multiplication by $\gamma$ defines a real-analytic function on $\on{O}_{\AA}(\R)/K$, and since $\on{O}_{\AA}(\R)/K$ is connected (as $K$ meets both of the two connected components of $\on{O}_{\AA}(\R)$), it follows that $\gamma$ stabilizes all of $\on{O}_{\AA}(\R)/K$.

 Now, the stabilizer in $\on{O}_{\AA}(\R)$ of any element $h \in \on{O}_{\AA}(\R)/K$ is given by $hKh^{-1}$. Since $\gamma$ stabilizes all of $\on{O}_{\AA}(\R)/K$, it follows that $\gamma \in \mc{K} \defeq \bigcap_{h \in \on{O}_{\AA}(\R)} hKh^{-1}$. But because $\mc{K}$ is a compact normal subgroup of $\on{O}_{\AA}(\R)$, it follows that $\mc{K}$ is discrete. Let $\mc{K}^+$ denote the intersection of $\mc{K}$ with the identity component $\on{O}_{\AA}(\R)^+$ of $\on{O}_{\AA}(\R)$. Then $\mc{K}^+$ is a discrete normal subgroup of the connected group $\on{O}_{\AA}(\R)^+$, and so $\mc{K}^+$ is central in $\on{O}_{\AA}(\R)^+$. It follows that $\mc{K}^+ \subset \{\pm \on{id}\}$, and hence that $\mc{K}^+$ is central in $\on{O}_{\AA}(\R)$. If $\gamma' \in \mc{K} \smallsetminus \mc{K}^+$, then $\gamma'$ commutes with elements of both connected components of $\on{O}_{\AA}(\R)$, so $\gamma'$ is central in $\on{O}_{\AA}(\R)$ since $\mc{K}$ is normal. Thus, $\mc{K} \subset \{\pm \on{id}\}$, and so $\gamma = \pm \on{id}$.

 Finally, let $\mc{U}$ to be the union of all $\on{O}_{\AA}(\Z)$-translates of the interior of $\ol{\mc{F}}$. Then $\mc{U}$ is an open subset of full measure, and we have shown above that the stabilizer in $\on{O}_{\AA}(\Z)$ of any $g \in \mc{U}$ is contained in $\{\pm \on{id}\}$, as desired. 
 
 This completes the proof of Lemma~\ref{lem-stabismall}.
 \end{proof}
   
   As for the second property, observe that by the definitions of $\mc{N}$ and $\mathscr{N}'$, the desired element $u_0$ must be such that $(u_0)_{i(i-1)} = 0$ for each $i$. Then, by proving the analogue of Lemma~\ref{lem-chopandmove} for the group $\on{SL}_n^{\pm}$, one finds that there exists at least one $u_0 \in \mathscr{N} \cap \on{SL}_n^{\pm}(\Z)$ such that $u_0u \in \mathscr{N}'$. By our explicit characterization of the elements of $\mc{N}$ (see~\eqref{eq-uform1} and~\eqref{eq-uform2}), the row-$i$, column-$j$ entry of $u$ is a non-constant polynomial in the unipotent coordinates for every pair $(i,j)$ with $i > j+1$. Thus, there exists an open subset $\mc{U}_2 \subset \mc{N}$ of full measure such that for any $u \in \mc{U}_2$ and $u_0 \in \mathscr{N} \cap \on{SL}_n^{\pm}(\Z)$, the row-$i$, column-$j$ entry of $u_0u$ is not an integer multiple of $\frac{1}{2}$ for every pair $(i,j)$ with $i > j$, unless $i+1 = j = \frac{n}{2}$, in which case $u_{ij} = 0$. In particular, if for $u \in \mc{U}_2$ and $u_0 \in \mathscr{N} \cap \on{SL}_n^{\pm}(\Z)$ we have $u_0u \in \mathscr{N}'$, then $u_0u$ must in fact lie in the interior of $\mathscr{N}'$, and imitating the proof of uniqueness in Lemma~\ref{lem-chopandmove} yields that $u_0$ must be unique.

   Having defined $\mc{U}_1$, take any $us\theta, u's'\theta' \in \mc{U}_1$ such that $g \cdot us\theta = u's'\theta'$ for some $g \in \on{O}_{\mc{A}}(\Z)$. Let $\varepsilon, \varepsilon' \in \{\pm \on{id}, \pm \varepsilon_n\}$ be such that $\varepsilon\theta,\varepsilon'\theta' \in \{\pm \on{id}, \pm \varepsilon_n\}\backslash \mathscr{K}$. Let $u_0, u_0' \in \mathscr{N} \cap \on{SL}_n^{\pm}(\Z)$ be the unique elements such that $u_0(\varepsilon u \varepsilon),u_0'(\varepsilon' u' \varepsilon')$ lie in the interior of $\mathscr{N}'$. Since $s_i,s_i' > c_1$ for all $i$, it follows from Lemma~\ref{thm-whatgreniersays} that $u_0(\varepsilon u \varepsilon)s(\varepsilon\theta) = u_0'(\varepsilon' u' \varepsilon')s'(\varepsilon'\theta')$. By the uniqueness of the Iwasawa decomposition, we must have
   \begin{equation} \label{eq-equalfacts}
   u_0(\varepsilon u \varepsilon) = u_0'(\varepsilon' u' \varepsilon'),\quad s = s', \quad \varepsilon\theta =\varepsilon'\theta'.
   \end{equation}
   The third equality in~\eqref{eq-equalfacts} implies that $\varepsilon\varepsilon' \in K$, so since $\varepsilon_n \not\in K$ when $n$ is even, it follows that $\varepsilon\varepsilon' = \pm \on{id}$. But $(\{\pm \on{id}\}) \backslash K$ contains either $\theta$ or $-\theta$ and not both, meaning that $\varepsilon\varepsilon' = \on{id}$. Combining this with the first equality in~\eqref{eq-equalfacts} yields that $u$ is a translate of $u'$ by an element of $\mathscr{N} \cap \on{SL}_n^{\pm}(\Z)$, and hence by an element of $\mc{N} \cap \on{O}_{\mc{A}}(\Z)$. The uniqueness statement in Lemma~\ref{lem-chopandmove} then implies that $u = u'$. We conclude that $us\theta = us'\theta'$.

This completes the proof of Lemma~\ref{thm-34cubic}, and hence also that of Theorem~\ref{thm-boxfunddomain}.
\end{proof}

\subsection{Fundamental sets for $\GG(\bR) \curvearrowright W(\bR)$ and $\GG(\Z) \curvearrowright W(\bR)$} \label{sec-reduct}

Let $r$ and $s$ be nonnegative integers with $r+2s=n$. We define $U(\R)^{(r)}$ to be the set of monic polynomial $f(x)$ of degree $n$ with $r$ distinct real roots and $s$ distinct pairs of complex conjugate roots. We let $W(\R)^{(r)}$ denote $\inv^{-1}(U(\R)^{(r)})$, and let $W(\R)^{(r),\dist}$ be the set of elements in $W(\R)^{(r)}$ that are reducible over $\R$. Let $f$ be an element in $U(\R)^{(r)}$.
From \cite[Section 9]{MR3156850} and \cite[Section 4]{MR3719247}, we know that the set $\{B \in W(\R)^{(r),\dist} : \inv(B)=f\}$ consists of a single $\GG(\R)$-orbit, and also that the quantity
\begin{equation} \label{eq-thetarrr}
\stabsize_r \defeq \#\Stab_{G(\R)}(B)
\end{equation}
is independent of the choice of  $B\in W(\R)^{(r)}$.

We next exhibit an explicit representative of the
reducible $\GG(\R)$-orbit of $\inv^{-1}(f)$, for polynomials $f\in U(\R)$. Denote the coefficients of $f$ by $f(x)=x^n+f_1x^{n-1}+\cdots+f_n$,
and define $\sigma_0 \colon U(\R) \to W_0(\R)$ by
\begin{equation*}
\sigma_0(f) \defeq    \left[\begin{array}{ccccccccc}
& & & & & & & 1 & 0 \\
& & & & & & \rddots & 0 & \\
& & & & & 1 & & & \\
& & & & 1 & 0 & & & \\
& & & 1 & f_1 & -\frac{f_2}{2} & & & \\
& & 1 & 0 & - \frac{f_2}{2} & - f_3 & \ddots & & \\
& \rddots & & & & \ddots & \ddots & - \frac{f_{n-3}}{2} & \\
1 & 0 & & & & & - \frac{f_{n-3}}{2} & - f_{n-2} & - \frac{f_{n-1}}{2} \\
0 & & & & & & & - \frac{f_{n-1}}{2} & - f_n
\end{array}\right]
\end{equation*}
if $n$ is odd, and by
\begin{equation*}
\sigma_0(f) \defeq
\left[\begin{array}{cccccccccc} & & & & & & & & 1 & 0 \\
& & & & & & & \rddots & 0 & \\
& & & & & & 1 & & & \\
& & & & & 1 & 0 & & & \\
& & & & 1 & -\frac{f_1}{2} & & & & \\
& & & 1 & -\frac{f_1}{2} & \frac{f_1^2}{4}- f_2 & - \frac{f_3}{2} & & & \\
& & 1 & 0 & & - \frac{f_3}{2} & - f_4 & \ddots & & \\
& \rddots & & & & & \ddots & \ddots & -\frac{f_{n-3}}{2} & \\
1 & 0 & & & & & & -\frac{f_{n-3}}{2} & -f_{n-2} & - \frac{f_{n-1}}{2} \\
0 & & & & & & & & - \frac{f_{n-1}}{2} & -f_n \end{array}\right]
\end{equation*}
if $n$ is even.
These sections are constructed and used in~\cite[Equations (11) and (27)]{sqfrval}. It is easy to check that we have $\on{inv}(\sigma_0(f)) = f$, and so $\sigma_0$ is indeed a section. This explicit section $\sigma_0$ is useful in the proof of Proposition~\ref{prop-jac} (to follow), for its image consists of matrices whose entries are polynomials in the $f_i$. However, in Section~\ref{sec-cubicslice} (also to follow), we shall require a section $\sigma$ with image consisting of elements $B$ whose entries are $O(\on{H}(B))$. To this end, we rescale $\sigma_0$: given $f\in U(\R)$ having height $Y$ and nonzero discriminant, set $f_1(x)\defeq f(x/Y)$. Then $\on{H}(f_1)=1$, and we define the section $\sigma$ by $\sigma(f)\defeq \on{H}(f)\sigma(f_1)$. It is easy to check that $\sigma$ is also a section, and that the coefficients of $B$ in the image of $\sigma$ are bounded by $O(\on{H}(B))$. We now define $\fundset^{(r)}$ to be $\sigma(U(\R)^{(r)})\subset W(\R)^{(r),\dist}$. 

Finally, we have the following result, which follows immediately from an argument parallel to \cite[Section 2.1]{MR3272925}:
\begin{prop}
The multiset $\FF\cdot\fundset^{(r)}$ is a cover of a fundamental domain for the action of $\GG(\Z)$ on $W(\R)^{(r),\dist}$. More precisely, every $\GG(\Z)$-orbit of $B\in W(\R)^{(r),\dist}$ is represented exactly $\theta_r/\#\on{Stab}_{G(\R)}(B)$ times in $\FF\cdot\fundset^{(r)}$. 
\end{prop}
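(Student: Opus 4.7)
The plan is to reduce the covering claim to the standard orbit-stabilizer count used for multiset covers in the geometry-of-numbers literature, closely following \cite[\S 2.1]{MR3272925}. Two ingredients drive the argument. First, $\fundset^{(r)}=\sigma(U(\R)^{(r)})$ meets each $\GG(\R)$-orbit on $W(\R)^{(r),\dist}$ in exactly one point, since $\sigma$ is a section of $\on{inv}$ and since the fiber $\{B':\on{inv}(B')=f\}\cap W(\R)^{(r),\dist}$ consists of a single $\GG(\R)$-orbit for each $f\in U(\R)^{(r)}$. Second, $\FF$ is, up to a measure-zero boundary, a strict fundamental domain for $\GG(\Z)\backslash\GG(\R)$ by Theorem~\ref{thm-boxfunddomain} and Lemma~\ref{thm-34cubic}.

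Fix $B\in W(\R)^{(r),\dist}$, and set $f=\on{inv}(B)$ and $B_0=\sigma(f)\in\fundset^{(r)}$. Since $\on{inv}$ is $\GG(\R)$-invariant, any pair $(g,B_0')\in\FF\times\fundset^{(r)}$ with $g\cdot B_0'\in\GG(\Z)\cdot B$ must satisfy $\on{inv}(B_0')=f$, forcing $B_0'=B_0$. So the multiplicity of the $\GG(\Z)$-orbit of $B$ in $\FF\cdot\fundset^{(r)}$ equals $\#\{g\in\FF:g\cdot B_0\in\GG(\Z)\cdot B\}$. Choosing $h\in\GG(\R)$ with $h\cdot B_0=B$, the condition $g\cdot B_0\in\GG(\Z)\cdot B$ unfolds to $g\in\GG(\Z)\cdot h\cdot\on{Stab}_{\GG(\R)}(B_0)$. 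Since each right $\GG(\Z)$-coset in $\GG(\R)$ meets the interior of $\FF$ in exactly one point, this count equals the number of distinct cosets $\GG(\Z)\cdot h\sigma_s$ as $\sigma_s$ ranges over $\on{Stab}_{\GG(\R)}(B_0)$.

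To conclude, I would invoke orbit-stabilizer: two elements $\sigma_s,\sigma_s'\in\on{Stab}_{\GG(\R)}(B_0)$ give the same coset iff $h\sigma_s\sigma_s'^{-1}h^{-1}\in\GG(\Z)\cap\on{Stab}_{\GG(\R)}(B)=\on{Stab}_{\GG(\Z)}(B)$, so the number of distinct cosets equals $|\on{Stab}_{\GG(\R)}(B_0)|/|\on{Stab}_{\GG(\Z)}(B)|=\theta_r/\#\on{Stab}_{\GG(\Z)}(B)$, which is the multiplicity appearing in the statement.

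The only genuine technical subtlety is that the measure-zero boundary of $\FF$ could in principle double-count or omit orbits. This is why Lemma~\ref{thm-34cubic} was stated with the open full-measure subset $\mc{U}_1\subset\mc{S}_1$ on which each $\on{O}_{\mc{A}}(\Z)$-orbit meets $\mc{U}_1$ at most once: restricting attention to $\mc{U}_1$, and using the translations constructed in Lemma~\ref{lem-chopandmove} to arrange surjectivity outside a negligible set, guarantees that the orbit-stabilizer count above holds outside a set of measure zero, which is all that is needed for the subsequent volume-asymptotic arguments.
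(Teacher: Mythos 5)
Your proof is correct, and it is precisely the orbit--stabilizer argument from \cite[\S2.1]{MR3272925} that the paper cites rather than writes out, so the two approaches coincide. Two minor remarks. First, the multiplicity you obtain, $\theta_r/\#\on{Stab}_{\GG(\Z)}(B)$, is the right one (it is what the subsequent equation~\eqref{eq-switch} requires and what the cited source gives), but note that the proposition as typeset reads $\theta_r/\#\on{Stab}_{G(\R)}(B)$, which is identically $1$ because $\theta_r$ is \emph{defined} to equal $\#\on{Stab}_{G(\R)}(B)$ for every $B\in W(\R)^{(r)}$; the intended denominator is $\#\on{Stab}_{\GG(\Z)}(B)$ as you derived, so the paper's statement contains a typo that your argument implicitly corrects. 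Second, your closing caution about measure-zero boundary issues is unnecessary at this stage: Lemma~\ref{lemmasiegelsand} produces a \emph{strict} fundamental domain $\FF$ --- the map $\FF\to\GG(\R)/\GG(\Z)$ is a genuine bijection, not merely one up to a null set --- so each right $\GG(\Z)$-coset meets $\FF$ in exactly one point and the count holds for every $B$, with no exceptional set to track. (The full-measure open subset $\mc{U}_1$ becomes relevant later, when one must verify that integrals over $\FF$ are insensitive to boundary behavior, not for the exact combinatorial multiplicity asserted here.)
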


\section{The action of the subgroup $\mc{P}$ on the reducible hyperplane $W_0$} \label{sec-transitivity}


In this section, we examine the action of the lower-triangular (parabolic) subgroup $\mc{P}$ on the reducible hyperplane $W_0$. Specifically, in Section~\ref{sec-preduct}, we show that over many interesting base rings $R$, the action of $\mc{P}(R)$ on the set of elements in $W_0(R)$ lying over a given nondegenerate invariant polynomial is simply transitive. Then, in Section~\ref{sec-jac}, we prove a Jacobian change-of-variables formula relating the Euclidean measure on $W_0(R)$ to the product of the Haar measure on $\mc{P}(R)$ with the Euclidean measure on $U(R)$. This formula will be applied in Section~\ref{sec-archvol}--\ref{sec-congrest} and Section~\ref{sec-sieve}.

\subsection{Reduction theory for the action of $\mc{P}$ on $W_0$} \label{sec-preduct}
Let $R$ be either a field or $\Z_p$ for some prime $p$. Then, by analogy with Proposition~\ref{prop-cantranslate4}, we have the following result, which classifies the orbits and stabilizers of $\mc{P}(R)$ on $W_0(R)$:

\begin{prop} \label{lem-cantranslate}
Let $R$ be as above, and let $f\in U(R)$ be an element with unit discriminant. Then $\inv^{-1}(f)\cap W_0(R)$ consists of a single $\mc{P}(R)$-orbit, and the stabilizer of any element of this unique orbit is trivial.
\end{prop}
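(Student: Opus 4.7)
The plan is to produce, for each $B\in \inv^{-1}(f)\cap W_0(R)$, a unique element $p\in\mc{P}(R)$ satisfying $p\cdot B=\sigma_0(f)$; both transitivity of the action and triviality of all stabilizers will follow at once. The strategy is an explicit Gauss-elimination-style reduction of $B$ to the canonical form $\sigma_0(f)$, exploiting the compatible triangular structures on $\mc{P}$ and $W_0$.

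The main preliminary step is a \emph{unit-pivot lemma}: the hypothesis $\on{disc}(f)\in R^\times$ forces each leading antidiagonal entry $b_{i(n-i)}(B)$ (and $b_{\frac{n}{2}\frac{n}{2}}(B)$ when $n$ is even) to be a unit of $R$. Concretely, writing $\inv(B)$ as a polynomial in the entries $b_{ij}$ with $i+j\geq n$, one verifies that the polynomial $\lambda(B)$ of \eqref{eq-defoflambda} divides a suitable power of $\on{disc}(\inv(B))$; equivalently, this divisibility can be extracted from the Jacobian change-of-variables formula of Proposition~\ref{prop-jac} below (provided one is careful to avoid circularity). I expect this algebraic identity to be the main technical obstacle of the proof; once it is in place, everything else is formal.

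Given the pivot lemma, I would produce the desired $p$ by acting successively with the factors in the decomposition $\mc{P}=NT$, where $T\subset\mc{P}$ is the diagonal torus and $N\subset\mc{P}$ is its unipotent radical. First, a unique element of $T(R)$, expressible in the coordinates $s_1,\ldots,s_{\lfloor n/2\rfloor}$ of \S\ref{sec-coord}, rescales the $\lfloor n/2\rfloor$ leading antidiagonal entries of $B$ to match those of $\sigma_0(f)$; the needed $s_i$ lie in $R^\times$ exactly because of the pivot lemma. Next, elements of $N(R)$ with unipotent coordinates $u_{ij}$ are applied to clear the remaining off-antidiagonal entries of $B$ one at a time, proceeding in order of increasing distance from the antidiagonal $i+j=n$. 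At each step, the new $u_{ij}$ affects its target entry by a linear expression whose leading coefficient is a product of already-normalized (unit) pivots, so a unique $u_{ij}\in R$ achieves the prescribed value. After these operations, the entries of $B$ on and strictly above the antidiagonal agree with those of $\sigma_0(f)$; since the $\mc{P}$-action preserves $\inv$ and the remaining ``$f$-block'' entries of any such normalized matrix are determined by its invariant polynomial, we conclude $p\cdot B=\sigma_0(f)$.

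Uniqueness of $p$ is built into the construction, since at each step the relevant $s_i$ or $u_{ij}$ is the unique solution to a linear equation with unit leading coefficient. Specializing the argument to $B=\sigma_0(f)$ immediately gives $\Stab_{\mc{P}(R)}(\sigma_0(f))=\{\on{id}\}$, and by $\mc{P}(R)$-equivariance every other stabilizer in the orbit is trivial as well. Because the only divisions required are by units of $R$, the argument works uniformly in both cases $R=$ field and $R=\Z_p$, with no need to invoke Hensel lifting separately.
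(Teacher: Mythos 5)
Your overall strategy coincides with the paper's: first observe that the leading anti-diagonal entries $b_{i(n-i)}$ must be units (because each divides $\disc(f)$), then normalize them with the torus and clear the remaining entries with the unipotent radical, one entry at a time in a triangular order, so that both transitivity and triviality of stabilizers follow from the uniqueness of each elementary step. Two points, however, need correction.

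First, your proposed route to the unit-pivot lemma is shaky. Extracting the divisibility $\lambda(B)\mid\disc(f)^k$ from Proposition~\ref{prop-jac} is dangerous precisely for the circularity you flag: that proposition (and especially Proposition~\ref{prop-jacobian}, which pins down the constant $\mc{J}$) is proved \emph{after} and \emph{using} the present proposition, and in any case a Jacobian/measure identity does not directly give an algebraic divisibility over an arbitrary $R$. The paper simply states the divisibility as an algebraic fact; if you want a self-contained justification, argue directly that $\disc(\inv(B))$ lies in the ideal generated by each $b_{i(n-i)}$ (e.g.\ by observing that if some $b_{i(n-i)}$ vanishes then the quadratic form $B$ has a larger isotropic space in common with $\mc A$ and $\inv(B)$ becomes degenerate).

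Second, and more seriously, the final claim that ``the argument works uniformly in both cases $R=$ field and $R=\Z_p$'' is false. The canonical form $\sigma_0(f)$ has entries $-f_i/2$, so $\sigma_0$ is not even a map into $W_0(R)$ unless $2\in R^\times$; your reduction target does not exist over $\Z_2$ (for $n$ odd) or over a field of characteristic $2$. Moreover, the parametrization of $N$ in \eqref{eq-uform1} has entries of the form $-\tfrac12 u_{\lceil n/2\rceil j}^2 + *$ with $*$ having integer coefficients, which forces $u_{\lceil n/2\rceil j}\in 2\Z_2$ for any $u\in N(\Z_2)$ when $n$ is odd. So the unipotent group \emph{cannot} clear the corresponding entries of $B$ over $\Z_2$; it can only reduce them modulo $2$, i.e.\ to $0$ or $1$. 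The paper handles this with an extra argument: it shows the resulting pattern of $0$'s and $1$'s in row $\lfloor n/2\rfloor$ is determined by the parities of the odd-indexed coefficients of $f$, hence by the invariant polynomial, and therefore transitivity still holds even though the orbit representative is no longer $\sigma_0(f)$. (The char-$2$ field case with $n$ even is dispatched by noting the statement is vacuous there, since $\disc$ vanishes identically on $W_0$.) Without these case distinctions your proof does not cover the full range of $R$ in the statement.
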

\begin{proof}
We first prove the transitivity claim for \emph{any} integral domain $R$ in which $2$ is invertible (this includes every field of characteristic not $2$, as well as $\Z_p$ for odd primes $p$). 
Let $B = [b_{ij}]$ be any element in
$\inv^{-1}(f)\cap W_0(R)$. The idea of the proof is to show that $B$ can be transformed under the group $\mc{P}(R)$ into an element of the image of the section $\sigma_0$, which is defined over $R$ because we assumed that $2 \in R^\times$. Since distinct elements in the image of $\sigma_0$ have distinct invariants, it will follow that $B$ is $\mc{P}(R)$-equivalent to $\sigma_0(f)$, as necessary.

We now translate $B$ into the image of $\sigma_0$. First, notice that each $b_{i(n-i)}$ is a unit in $R$: indeed, the discriminant of $f$ is a unit in $R$, and each $b_{i(n-i)}$ divides the discriminant of $f$. Thus, we may use the action of the diagonal matrices in $\mc{P}(R)$ to transform each $b_{i(n-i)}$ into $1$, and we may assume in what follows that $b_{i(n-i)} = 1$ for each $i$.

Next, using the coefficients of $N$ given in \eqref{eq-uform1} and~\eqref{eq-uform2}, we abuse notation by denoting, for any $v_{ij}\in R$, an element in $N(R)$ called $\wt{u}_{ij}$, whose $u_{ij}$-coefficient is $v_{ij}$, and whose other coefficients are $0$. We successively replace $B$ by $N(R)$-translates of itself by means of the following steps:
\begin{itemize}[leftmargin=.7cm,itemsep=3pt]
\item[{\rm (1)}] Let $v_{21} \in R$ be such that $b_{1n} + \wt{u}_{21} = 0$. In other words, $v_{21}=-b_{1n}$. We redefine $B$ to be $\wt{u}_{21} \cdot B$. We now have $b_{1n} = 0$.
\item[{\rm (2)}] Set $v_{32} \defeq -b_{2(n-1)}$, and redefine $B$ to be $\wt{u}_{32} \cdot B$. Next, set $v_{31}\defeq -b_{2n}$ and redefine $B$ to be $\wt{u}_{31} \cdot B$. We now have $b_{2(n-1)}=b_{2n}=0$.
\item[{\rm ($k$)}] Let $k \in \{3, \dots, n-2\}$, and suppose that we have
transformed the first $k-1$ rows (and hence the first $k-1$ columns) of $B$ into the required form. We now explain how to transform the $k^{\mathrm{th}}$ row (and column) of $B$ into what is needed; i.e., we explain how to clear out the entries $b_{k(n-j)}$ for $j$ in the appropriate range. First set
$v_{(k+1)\min\{k,n-k-1\}}\defeq-b_{k\max\{k+2,n-k+1\}}$ and redefine $B$ to be $\wt{u}_{(k+1)\min\{k,n-k-1\}}\cdot B$. Next set $v_{(k+1)\min\{k-1,n-k-2\}}\defeq-b_{k\max\{k+3,n-k+2\}}$ and redefine $B$ to be
$\wt{u}_{(k+1)\min\{k-1,n-k-2\}}\cdot B$. These two steps clear out $b_{k\max\{k+2,n-k+1\}}$ and $b_{k\max\{k+3,n-k+2\}}$.
Continuing in this manner, let $k' \in \{3, \dots, \min\{k-1,n-k-2\}\}$, and suppose that we have cleared out $b_{kj}$ for $j\in\max\{k+2,n-k+1\},\ldots,\max\{k+k'+1,n-k+k'\}$. Then set $v_{(k+1)\min\{k-k',n-k-k'-1\}}\defeq b_{k\max\{k+k'+2,n-k+k'+1\}}$, and redefine $B$ to be $\wt{u}_{(k+1)\min\{k-k',n-k-k'-1\}}\cdot B$. This process has now cleared out the all of the required entries in the $k^{\mathrm{th}}$ row of $B$.
\end{itemize}
Having transformed the first $n-2$ rows (and therefore columns) of $B$ into the required form, note that the last two rows are already as required.  We have thus replaced $B$ by an $\mc{P}(R)$-translate to ensure that $B$ lies in the image of $\sigma_0$, as desired.

We now handle the case where $R = \Z_2$. In this case, when $n$ is even, the argument given above works without change. But it does not quite work when $n$ is odd, because every element $u \in N(\Z_2)$ has the property that $u_{\lceil \frac{n}{2}\rceil j}$ is divisible by $2$ for each $j \in \{1, \dots, \lfloor \frac{n}{2} \rfloor\}$ (see~\eqref{eq-uform1}). Thus, in step ($\lfloor \frac{n}{2} \rfloor$) of the process outlined above, the action of $N(\Z_2)$ can only be used to make the entries $b_{\lfloor \frac{n}{2} \rfloor j}$ equal to either $0$ or $1$. Consequently, once all the steps have been completed, the resulting matrix $B$ may fail to lie in the image of $\sigma_0(f)$, but only because some of the entries $b_{\lfloor \frac{n}{2} \rfloor j}$ might be equal to $1$. Now, it is easy to verify that such a matrix $B$ has $b_{\lfloor \frac{n}{2} \rfloor j} = 1$ if and only if the $x^{2n-2j+1}$-coefficient of $f$ is odd. Thus, the arrangement of $0$'s and $1$'s in the $\lfloor \frac{n}{2} \rfloor^{\mathrm{th}}$ row of $B$ is uniquely determined by the invariants, and the claim follows.

We finally handle the case where $R$ is a field of characteristic $2$. In this case, when $n$ is even, every element of $W_0(R)$ has discriminant zero, and so the claim is moot. When $n$ is odd, the argument used in the case $R = \Z_2$ works without change.

The claim regarding the stabilizers of elements in $W_0(R)$ follows by inspection. Indeed, any element of $\mc{P}(R)$ stabilizing $B$ must fix each $b_{i(n-i)}$ and hence must lie in the subgroup $N(R) \subset \mc{P}(R)$; moreover, any element of $u \in N(R)$ can be factored as $u = \prod \wt{u}_{ij}$, and it follows from the points enumerated above that $u$ stabilizes $B$ if and only if each $\wt{u}_{ij}$ stabilizes $B$, which happens if and only if each $\wt{u}_{ij}$ is the identity matrix. (Note: this stabilizer computation works over any integral domain $R$.)
\end{proof}

By analogy with Lemma~\ref{lem-cantranslate44}, the above result has the following immediate consequence by specializing to the case $R=\R$:
\begin{lemma} \label{lem-stabisntbig}
Let $f\in U(\R)$ be nondegenerate. Then the set $\{B \in \inv^{-1}(f)_0 :  b_{k(n-k)}(B) >0 \text{ for each $k$}\}$ consists of a single $N(\R)T$-orbit.
\end{lemma}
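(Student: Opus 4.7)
The plan is to bootstrap from Proposition~\ref{lem-cantranslate} applied with $R=\R$, which gives that $\mc{P}(\R)$ acts simply transitively on $\inv^{-1}(f)_0$ and that every element $B$ of this set has all anti-diagonal entries $b_{k(n-k)}$ nonzero (each such entry divides $\disc(f)\neq 0$). Given $B,B'$ in the specified subset with all $b_{k(n-k)},b'_{k(n-k)}>0$, there is thus a unique $g\in\mc{P}(\R)$ with $g\cdot B=B'$, and the task reduces to showing $g\in N(\R)T$.

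I would use the Levi decomposition to write $g=ut'$, where $u\in N(\R)$ and $t'$ is the image in $G$ of a diagonal element $\diag(t'_1,\ldots,t'_n)\in\on{O}_{\mc{A}}(\R)$ with entries of arbitrary sign, subject to $t'_it'_{n-i+1}=1$. A direct computation from the action formula $g\cdot B=(g^{-1})^tBg^{-1}$ yields $(t'\cdot B)_{k(n-k)}=(t'_kt'_{n-k})^{-1}b_{k(n-k)}$. The key auxiliary fact is that $N(\R)$ preserves the sign of every $b_{k(n-k)}$: indeed, $N(\R)$ is connected (isomorphic as a variety to affine space), the map $u\mapsto(u\cdot B)_{k(n-k)}$ is continuous in $u$, and by Proposition~\ref{lem-cantranslate} applied to $u\cdot B\in\inv^{-1}(f)_0$ this function is nowhere zero, so its sign is constant in $u$.

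Combining these observations, $\sgn(b'_{k(n-k)})=\sgn(t'_kt'_{n-k})\cdot\sgn(b_{k(n-k)})$, and positivity of both anti-diagonals forces $\sgn(t'_kt'_{n-k})=+1$ for every $k\in\{1,\ldots,\lfloor n/2\rfloor\}$. Together with $t'_it'_{n-i+1}=1$, this will force the image of $t'$ in $G$ to lie in $T$. For odd $n$, the determinant-one condition in $\on{SO}_{\mc{A}}$ fixes $t'_{(n+1)/2}=1$; the sign constraint at $k=(n-1)/2$ then yields $\sgn(t'_{(n-1)/2})=+1$, and running the remaining constraints inductively gives $\sgn(t'_i)=+1$ for all $i$, so $t'\in T$. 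For even $n$, the same propagation shows that $\sgn(t'_i)$ for $i\le n/2$ are all equal, so either $t'\in T$ already or $t'=-|t'|$ with $|t'|\in T$; since $-\on{id}\in\mu_2$ acts trivially on $W$ and is identified with $\on{id}$ in $G=\on{O}_{\mc{A}}/\mu_2$, the image of $t'$ in $G$ coincides with that of $|t'|$ and hence lies in $T$.

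The most delicate point is the even-$n$ case, where $[\mc{P}(\R):N(\R)T]=2^{n/2-1}$ rather than $2^{n/2}$: the argument only closes because $-\on{id}\in\mu_2$ becomes trivial in $G$, absorbing the overall sign ambiguity between $T$ and the full diagonal torus into the quotient defining $G$.
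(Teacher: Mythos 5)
Your approach is the same as the paper's: the paper states that the lemma is an immediate consequence of Proposition~\ref{lem-cantranslate} with $R=\R$, and your proof simply supplies the bookkeeping. The connectedness argument for sign-preservation under $N(\R)$ is valid (the anti-diagonal entries $b_{k(n-k)}$ are in fact \emph{exactly} preserved by $N(\R)$, as the paper uses in the proof of Proposition~\ref{prop-jac}, but your weaker statement suffices), and the odd-$n$ analysis is correct.

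The even-$n$ case has a gap. You assert that every $g\in\mc{P}(\R)$ factors as $ut'$ with $u\in N(\R)$ and $t'$ the image of a real diagonal element of $\on{O}_{\mc{A}}(\R)$; this is false. Because $G=\on{O}_{\mc{A}}/\mu_2$ and $H^1(\R,\mu_2)\cong\Z/2$ is nontrivial, $G(\R)$ (and hence $\mc{P}(\R)$) strictly contains the image of $\on{O}_{\mc{A}}(\R)$. Concretely, the maximal torus $T'(\R)\subset\mc{P}(\R)$ has an extra coset of $T$ consisting of classes $[\diag(\sqrt{-1}\,b_1,\dots,\sqrt{-1}\,b_n)]$ with $b_j\in\R^\times$ and $b_jb_{n+1-j}=-1$; these define $\R$-points of $G$ because complex conjugation negates such a matrix, which is trivial in $\mu_2$. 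Consequently $[\mc{P}(\R):N(\R)T]=2^{n/2}$, not the $2^{n/2-1}$ you state, and the $g$ furnished by Proposition~\ref{lem-cantranslate} need not admit the factorization you use. The fix is easy: for $g$ in this extra coset, $(g\cdot B)_{(n/2)(n/2)}$ picks up a factor $(\sqrt{-1}\,b_{n/2})^{-2}=-b_{n/2}^{-2}<0$, so $g$ flips the sign of the middle anti-diagonal entry and cannot carry $B$ to $B'$ when both have all $b_{k(n-k)}>0$. Once this coset is ruled out separately, your Levi-factor computation goes through and gives the lemma.
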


\subsection{A Jacobian change-of-variables formula} \label{sec-jac}

Proposition~\ref{lem-cantranslate} implies that when $R = \R$ or $\Z_p$ for a prime $p$, the space $W_0(R)$ is a fibration over $U(R)$, where the generic fiber can be identified with $\mc{P}(R)$, so long as it is nonempty. Thus, the Euclidean measure on $W_0(R)$ should be related to the product of the Haar measure on $\mc{P}(R)$ with the Euclidean measure on $U(R)$. By analogy with Proposition~\ref{prop-jac4}, the following result shows that the relationship between these measures is governed by the polynomial function $\lambda$ on $W_0$ defined as in~\eqref{eq-defoflambda}:

\begin{prop} \label{prop-jac}
Let $R=\R$ or $\Z_p$ for some prime $p$. Let $\phi \colon W_0(R) \to \R$ be a measurable function. Then there exists a nonzero rational number $\mc{J} \in \Q^\times$ such that
\begin{equation*}
\int_{B\in W_0(R)}\phi(B){\vert}\lambda(B){\vert}dB=
{\vert}\mc{J}{\vert}\int_{\substack{f\in U(R)\\ \on{disc}(f)\neq 0}}
\Bigl(\sum_{B\in\frac{\inv^{-1}(f)_0}{\mc{P}(R)}}\int_{h\in \mc{P}(R)} \phi(h \cdot B)dh\Bigr)df,
\end{equation*}
where $dB$ and $df$ are Euclidean measures, where $dh$ is the right Haar measure on $\mc{P}$ given by $dh=\delta(s)^{-1}dud^\times s$, and where ${\vert}-{\vert}$ denotes the usual absolute \mbox{value on $R$.}
\end{prop}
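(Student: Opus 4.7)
The plan is to perform an explicit change of variables on $W_0(R)$ via the parametrization $\Psi\colon\mc{P}(R)\times U(R)^{\neq 0}\to W_0(R)$, $(h,f)\mapsto h\cdot\sigma_0(f)$, where $U(R)^{\neq 0}\subset U(R)$ denotes the locus of polynomials with nonzero discriminant. By Proposition~\ref{lem-cantranslate}, for each nondegenerate $f$ the fiber $\inv^{-1}(f)\cap W_0(R)$ decomposes as a disjoint union of finitely many $\mc{P}(R)$-orbits, each with trivial stabilizer, so $\Psi$ restricted to each orbit is a $\mc{P}(R)$-torsor; the sum over orbits in the claimed formula accounts for fibers with multiple orbits, which can only occur over $R=\Z_p$ when $\on{disc}(f)$ is not a $p$-adic unit. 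Granted this parametrization, the formula reduces to the pointwise Jacobian identity
\begin{equation*}
|\lambda(\Psi(h,f))|\,dB \;=\; |\mc{J}|\,dh\,df
\end{equation*}
on $\mc{P}(R)\times U(R)^{\neq 0}$, for some nonzero rational $\mc{J}$.

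To verify this Jacobian identity, I would proceed in two steps. First, I use the left-equivariance $\Psi(h_0h,f)=h_0\cdot\Psi(h,f)$ to reduce the $h$-dependence. By the chain rule, the Lebesgue-density Jacobian $|J_{\Psi}(h,f)|$ is determined by its value at $h=\id$ together with the Jacobian of the linear action of $h$ on $W_0$ and that of left translation by $h$ on $\mc{P}$. Likewise, $|\lambda(\Psi(h,f))|$ and the Radon--Nikodym density of $dh=\delta(s)^{-1}du\,d^{\times}s$ against Lebesgue can be tracked as $h$ varies; all three transform by characters of the torus $T$, with $N$ contributing trivially (since $N$ acts unipotently on $W_0$ and on $\mc{P}$-coordinates). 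A direct computation of these three characters---the weight of $T$ on the tuple of $W_0$-coordinates, the transformation rule for $\lambda(s\cdot\sigma_0(f))$, and the modular character $\delta(s)$---shows that they combine to cancel the $h$-dependence of the claimed identity. The subtle point that unipotent $u\in N$ can alter the anti-diagonal entries of $\Psi(h,f)$ and hence $|\lambda|$ is resolved because these alterations are exactly compensated by the non-translation-invariant part of $|J_{\Psi}|$ under $N$.

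Second, it remains to compute $|\lambda(\sigma_0(f))|\cdot|J_{\Psi}(\id,f)|=|\mc{J}|$ at the base point. By construction of $\sigma_0$, the anti-diagonal entries are $\pm 1$ independently of $f$, so $|\lambda(\sigma_0(f))|$ is constant (normalized to $1$). To compute $|J_{\Psi}(\id,f)|$, I order the $W_0$-coordinates as (anti-diagonal entries first, then the entries where $\sigma_0$ carries its $f$-dependence, then the remaining entries) and the source coordinates as $(s,f,u)$. Under this ordering, $D\Psi|_{(\id,f)}$ is block-triangular: the $s$-block scales the anti-diagonal entries diagonally; the $f$-block is an upper-triangular submatrix of $d\sigma_0|_{f}$ with $\pm 1$ on the diagonal; and the $u$-block encodes the infinitesimal $N$-action $X\mapsto -X^{T}\sigma_0(f)-\sigma_0(f)X$ for $X\in\on{Lie}(N)$, restricted to the remaining entries. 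After bijecting the free coordinates $u_{ij}$ (for $2\le i\le n-1$, $1\le j\le\min(i-1,n-i)$) with the above-anti-diagonal entries of $W_0$ that they primarily move, this block is also triangular with unit-multiple entries. Each diagonal block has a nonzero rational determinant, and their product defines $\mc{J}\in\Q^{\times}$.

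The main obstacle is verifying the block-triangular structure of $D\Psi|_{(\id,f)}$ and computing the unipotent block, which requires a careful bijection between the free coordinates of $N$ and a distinguished subset of above-anti-diagonal entries of $W_0$ on which the restriction of $X\mapsto -X^{T}\sigma_0(f)-\sigma_0(f)X$ is unit triangular. A secondary issue is the case $R=\Z_2$ for $n$ even, where $\sigma_0$ has $\tfrac{1}{2}$-coefficients and is not defined over $\Z_2$; this can be handled either by working over $\Q_2$ (the integrands remain supported on $\Z_2$) or by exhibiting an alternative integral section, neither of which affects $\mc{J}$.
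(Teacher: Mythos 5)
The proposal is correct in spirit and takes a genuinely more explicit route than the paper. Both proofs share the same first reduction: pull the measure back along $(h,f)\mapsto h\cdot\sigma_0(f)$ and show the resulting Jacobian factor $\mc{J}_\sigma(h,f)$ is independent of $h$, by observing that $|\lambda|\,dB$ and $dh\,df$ transform under left translation by $\gamma\in\mc{P}$ by the \emph{same} character (both equal to $\delta(s)^{-1}$). Where you diverge from the paper is in establishing $f$-independence and rationality of the constant. The paper does this abstractly: it cites Steps~2--4 of \cite[Prop.~3.10]{MR3272925} to argue that $\mc{J}_\sigma$ is independent of the choice of section (using right-invariance of $dh$), is a rational function of $f$ once $\sigma_0$ is taken to be the polynomial section, and is then forced to be a constant by a scaling/degree argument. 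You instead compute $D\Psi|_{(\id,f)}$ directly by exhibiting a block-triangular structure, with one block governed by the torus weights on the anti-diagonal coordinates, one by the section's $f$-dependence, and one by the infinitesimal $N$-action on the remaining coordinates, and observe each diagonal block has constant rational determinant. This works, and it has the advantage of producing $\mc{J}$ explicitly (whereas the paper evaluates $\mc{J}$ only later, in Proposition~\ref{prop-jacobian}, by comparing $\F_p$-point counts). It carries the cost that the triangularity of the unipotent block, and the choice of bijection between the $u_{ij}$ and the ``remaining'' entries, must actually be verified; you flag this as the main obstacle and do not carry it out, but it does hold (e.g.\ for $n=5$, with the pairing $u_{21}\leftrightarrow b_{15}$, $u_{31}\leftrightarrow b_{25}$, $u_{32}\leftrightarrow b_{24}$, $u_{41}\leftrightarrow b_{35}$, the block becomes lower-triangular with $-1$'s on the diagonal; the off-diagonal entries \emph{do} depend on $f$, which is why triangularity is essential).

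A few small corrections: (i) the $f$-block of $d\sigma_0$ has some entries $\pm\tfrac12$ (not $\pm1$) on the diagonal, but this still gives a nonzero rational determinant; (ii) your proposed ordering (anti-diagonal, then $f$-dependent, then remaining coordinates, against $(s,f,u)$) is not block-triangular---the block $\partial(\text{$f$-dep})/\partial u$ is nonzero in general---whereas ordering as (anti-diagonal, remaining, $f$-dependent) against $(s,u,f)$ is; (iii) the ``subtle point'' you raise about $N$ altering anti-diagonal entries is a misconception: in the conventions for which $\mc{P}$ preserves $W_0$, the unipotent radical fixes every $b_{i(n-i)}$, which is exactly what the paper notes and what makes $\chi_\lambda$ a character and the block decomposition possible. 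You should also state more carefully that the identity first proved over $\R$ (one orbit per fiber) is a rational identity in the $W_0$-coordinates, and then extends to $\Z_p$ (multiple orbits per fiber) by the principle of permanence of identities, which is what the paper invokes and what justifies summing over orbits in the stated formula.
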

\begin{remark}
Unlike in Proposition~\ref{prop-jac4}, which concerns a representation of small dimension, we cannot prove Proposition~\ref{prop-jac} by means of a direct computation. Instead, we follow the general four-step strategy used to prove \cite[Propositions 3.7, 3.10]{MR3272925}, which establish an analogous change-of-variables formula in a simpler setting, where the group $\mc{P}$ is replaced by a semisimple group, and the factor of $|\lambda(B)|$ is not present. 
 The first step is to note that a similar equation holds, where the Jacobian constant $\cJ$ is replaced by a Jacobian function that \emph{a priori} depends on the section $\sigma$, the group element $h$, and the invariant $f$. Second, the Jacobian function is shown to be independent of the group element $h$ using left-invariance of the Haar measure (which does not hold in our case). Third, the Jacobian function is shown to be independent of the section $\sigma$ using right-invariance of Haar measure (which does hold in our case, and the independence on the section follows in exactly the same way). Finally, once independence of $\sigma$ has been established, the section can be chosen to be a polynomial map, from which independence on the invariant can be easily deduced by comparing degrees and dimensions of the invariants and spaces involved. This final step also goes through without change for us. 

Thus, the main difference in our case is that the Haar measure $dh$ is not left-invariant. 
As we demonstrate below, the extra factor of $|\lambda(B)|$ captures how the volumes of sets in $W_0(R)$ transform under left-translation by group elements and therefore compensates for the failure of the measure $dh$ to be left-invariant.
\end{remark}

\begin{proof}[Proof of Proposition~\ref{prop-jac}]
Let $\mc{U}\subset U(\R)$ be an open set, and let $\sigma\colon \mc{U}\to W_0(\R)$ be a continuous section (such as the section $\sigma_0$ constructed in \S\ref{sec-reduct}) with respect to $\inv$. We first claim that we have
\begin{equation}\label{eq-jac1}
\int_{B \in \mc{P}(\R) \cdot \sigma(\mc{U})} \phi(B){\vert}\lambda(B){\vert} dB  = {\vert}\mc{J}{\vert}\int_{f \in \mc{U}} \int_{h\in \mc{P}(\R)} \phi(h \cdot \sigma(f)) dhdf,
\end{equation}
for some nonzero rational constant $\mc{J} \in \Q^\times$. We prove~\eqref{eq-jac1} in a series of steps.
First, by the Stone--Weierstrass theorem, we may assume that $\sigma$ is piecewise analytic, in which case we have
\begin{equation*}
\int_{B \in \mc{P}(\R) \cdot \sigma(\mc{U})} \phi(B){\vert}\lambda(B){\vert} dB  = \int_{f \in \mc{U}} \int_{h\in \mc{P}(\R)}{\vert}\mc{J_\sigma}(h,f){\vert} \phi(h\cdot \sigma(f))dhdf,
\end{equation*}
where $\mc{J}_{\sigma}(h,f)$ denotes the determinant of the Jacobian matrix arising from the change-of-variables taking the measure $\lambda(B)dB$ on to the product measure $dhdf$. Note in particular that the function $\mc{J}_{\sigma}$ is piecewise continuous in both arguments $h$ and $f$.

Second, we show that
$\mc{J}_{\sigma}(h,f)$ is independent of $h$. To do this, fix $\gamma \in \mc{P}(\R)$, and consider the transformation on $W_0(\R)$ sending $B \mapsto \gamma \cdot B$. Then there is a character $\chi_\lambda \colon \mc{P}(\R) \to \R_{> 0}$ such that $\lambda(\gamma \cdot B) d(\gamma \cdot B) = \chi_\lambda(\gamma) \lambda(B) dB$; note that $\chi_\lambda$ exists because $\lambda(B)$ is a product of coefficients that are unchanged by the action of $N(\R)$. In fact, writing $\gamma=tu=su$, the elementary computation \begin{align*}
\lambda(\gamma \cdot B)d(\gamma \cdot B) = \prod_{i = 1}^{\lfloor \frac{n}{2}\rfloor} \left(t_{i}/t_{i+1}\right)^{1-2i} \lambda(B) t_1^{n-1}\prod_{i = 2}^{\lfloor \frac{n}{2}\rfloor} t_i^{n-2i+2} dB &= t_1^{n-2}\prod_{i = 2}^{\lfloor \frac{n}{2}\rfloor} t_{i}^{n-2i} \lambda(B) dB\\
&= \delta(s)^{-1} \lambda(B) dB
\end{align*} for odd $n$ (and a similar one for even $n$), reveals that $\chi_\lambda(\gamma)=\delta(s)^{-1}$ for $\gamma=su$.
On the other hand, the transformation $B \mapsto \gamma \cdot B$ acts on $\mc{P}(\R) \times \mc{U}$ by sending $(h,f) \mapsto (\gamma h,f)$. Letting $\rho \colon \mc{P}(\R) \to \R_{>0}$ denote the modulus for the left action of $\mc{P}$ on the right Haar measure $dh$, we see that
\begin{equation} \label{eq-algtwo}
\mc{J}_{\sigma}(\gamma  h,f) d(\gamma  h) df = \rho(\gamma)\mc{J}_{\sigma}(\gamma  h,f) dhdf.
\end{equation}
By definition, $\rho(\gamma)=\delta(s)^{-1}$ for $\gamma=su$, and so we have $\rho(\gamma)=\chi_\lambda(\gamma)$. Therefore, we have
\begin{equation} \label{eq-algthree}
\mc{J}_\sigma(\gamma h,f)d(\gamma h)df=\lambda(\gamma B)dB=\chi_\lambda(\gamma)\lambda(B)dB=\chi_\lambda(\gamma)\mc{J}_\sigma(h,f)dhdf.
\end{equation}
Comparing~\eqref{eq-algtwo} and~\eqref{eq-algthree}, we see that $\mc{J}_\sigma(h,f)=\mc{J}(f)$ is independent of $h$.

Third, that $\mc{J}_{\sigma}(h,f)$ is independent of $\sigma$ follows from an argument identical to Step 2 in the proof of~\cite[Proposition 3.10]{MR3272925}; this step relies crucially on the fact that the measure $dh$ is right-invariant. Thus, we can take $\sigma$ to be the polynomial section $\sigma_0$ defined in Section~\ref{sec-reduct}. Having made this choice of section, that $\mc{J}_{\sigma_0}(h,f)$ is independent of $f$ and given by a nonzero rational constant follows from an argument identical to Steps 3 and 4 in the proof of~\cite[Proposition 3.10]{MR3272925}.

We have therefore proven \eqref{eq-jac1}. Proposition \ref{prop-jac} now follows from \eqref{eq-jac1} and the principle of permanence of identities in a manner identical to how \cite[Proposition 3.7]{MR3272925}
is deduced from \cite[Proposition 3.10]{MR3272925}.
\end{proof}

We conclude this section by computing the value of the Jacobian constant ${\vert}\mc{J}{\vert} \in \Q^\times$ that arises in Proposition~\ref{prop-jac}:
\begin{prop} \label{prop-jacobian}
The value of ${\vert}\mc{J}{\vert}$ is $1$ when $n$ is odd and $2^{-\frac{n}{2}}$ when $n$ is even.
\end{prop}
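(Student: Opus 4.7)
I would compute $|\mc{J}|$ by evaluating the pullback identity $|\lambda(B)|\,dB = |\mc{J}|\,dh\,df$ from Proposition~\ref{prop-jac} at the specific point $(h,f) = (e,f_0)$ for any $f_0 \in U(\R)$ with nonzero discriminant. Since $|\mc{J}|$ is constant (by Proposition~\ref{prop-jac}), a single such evaluation determines its value. Two observations simplify this: first, by direct inspection of the explicit formulas for $\sigma_0$, all antidiagonal entries $b_{i(n-i)}(\sigma_0(f))$ equal $1$ (including the central entry $b_{n/2,n/2}=1$ when $n$ is even), so $\lambda(\sigma_0(f_0))=1$; second, $\delta(e)=1$, so at the identity $dh|_e = du\,d^\times s$ in the Iwasawa coordinates. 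Hence $|\mc{J}|$ equals the Jacobian determinant at $(e,f_0)$ of the map $\Phi(h,f) = h \cdot \sigma_0(f)$.

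Next, I would exploit a triangular structure to compute this Jacobian. The differential is
\[
d\Phi|_{(e,f_0)}(X,df) = d\sigma_0(df) - X^t\sigma_0(f_0) - \sigma_0(f_0)X,
\]
viewed as a linear isomorphism $\mathrm{Lie}(\mc{P}) \oplus T_{f_0}U \to T_{\sigma_0(f_0)}W_0$. Ordering the $W_0$-coordinates by increasing $i+j$ (so the $\lfloor n/2\rfloor$ antidiagonal entries $\{b_{i(n-i)}\}$ come first, then the remaining entries with $i+j>n$), and ordering the source as (torus Lie coordinates $\{a_k\}$, unipotent Lie coordinates $\{u_{ij}\}$, invariant coordinates $\{f_\ell\}$), one finds that the Jacobian matrix is block lower-triangular: the antidiagonal entries are moved only by the torus (because $\sigma_0$'s antidiagonal is identically $1$ and the unipotent action infinitesimally preserves each $b_{i(n-i)}$), while the strictly-below-antidiagonal entries are moved by the unipotent generators and by $f$.

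It then remains to evaluate the two block determinants. The torus block is computed from the characters by which $T$ acts on each $b_{i(n-i)}$, read off from the explicit parametrization of $t_i$ in terms of $s_k$ in Eqs.~\eqref{eq-sodd}--\eqref{eq-seven}. The unipotent--$f$ block is read off directly from the explicit polynomial entries of $\sigma_0$: each $u_{ij}$ translates a designated off-antidiagonal $b$-entry with unit coefficient (since the relevant antidiagonal entries of $\sigma_0$ equal $1$), and each $\partial\sigma_0/\partial f_\ell$ contributes the $(-\tfrac{1}{2})$ and $(-1)$ factors visible in the formula for $\sigma_0$. Multiplying the two block determinants yields $|\mc{J}|=1$ for odd $n$ and $|\mc{J}|=2^{-n/2}$ for even $n$; the extra factor $2^{-n/2}$ in the even case can be traced to the distinguished central torus coordinate $\mathfrak{s}_n = \sqrt{s_{(n-2)/2}s_{n/2}}$ interacting with the special central entry $f_1^2/4 - f_2$ of $\sigma_0$. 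The hard part will be the careful bookkeeping of all the half-factors arising from the off-antidiagonal entries $-f_j/2$ of $\sigma_0$ and from the Iwasawa parametrization, especially in the even-$n$ case where the middle torus and middle unipotent directions interact non-trivially.
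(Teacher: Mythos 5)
The paper proves this proposition by a purely $p$-adic argument: since $\mc{J}\in\Q^\times$, the product formula reduces the computation of $|\mc{J}|=|\mc{J}|_\infty$ to that of $|\mc{J}|_p$ for every finite $p$, and each $|\mc{J}|_p$ is extracted by comparing the $p$-adic volume of a carefully chosen set $\Sigma\subset W_0(\Z_p)$ with an orbit count over $\F_p$ (resp.\ $\Z/2^m\Z$). Your proposal is a genuinely different route, namely a direct archimedean Jacobian computation at one point; that route is viable in principle but, as written, has a real gap and an imprecision.

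The gap is the normalization of $du$. You correctly observe that $\delta(e)=1$, so at the identity $dh|_e = du\,d^\times s$; but $du$ is \emph{not} the Lebesgue measure $\prod du_{ij}$ in the unipotent coordinates of Figure~\ref{fig-bigmats}. It is normalized so that $N(\Z)$ has covolume $1$, and the quadratic entries of the form $-\tfrac12 u_{\lceil n/2\rceil j}^2 + *$ in~\eqref{eq-uform1} force $u_{\lceil n/2\rceil j}\in 2\Z$ for integrality when $n$ is odd; thus $N(\Z)$ has covolume $2^{\lfloor n/2\rfloor}$ in the $u$-coordinates, i.e.\ $du = 2^{-\lfloor n/2\rfloor}\prod du_{ij}$ for odd $n$ (while the covolume is $1$ for even $n$, since the quadratic term in each relevant entry is an even integral form in that case). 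Without this correction, the determinant of $(u,s,f)\mapsto h\cdot\sigma_0(f)$ at $(e,f_0)$ works out to $2^{-\lfloor n/2\rfloor}$ for odd $n$ (coming from the $\lfloor n/2\rfloor$ entries $-f_j/2$, $j$ even, in $\sigma_0$), so your computation as described would return $2^{-\lfloor n/2\rfloor}$, not $1$; the covolume factor $2^{\lfloor n/2\rfloor}$ is exactly what restores the answer. This is precisely the $2$-adic phenomenon the paper's proof captures via the relation $\Vol(\mc P(\Z_2))=2^{\lfloor n/2\rfloor}q^{-\dim\mc P}\#\mc P(\Z/q\Z)$ in the odd case.

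Two smaller points. First, the claim that ``the unipotent action infinitesimally preserves each $b_{i(n-i)}$'' is only correct because, at any $B\in W_0$, the two sums in $\tfrac{d}{dt}(\exp(tX)\cdot B)_{i,n-i}$ land entirely in the zero block $\{b_{kl}: k+l<n\}$; it is worth spelling this out, since it is what makes the block-triangular structure hold at an arbitrary $f_0$ rather than only at a degenerate one. Second, ``each $u_{ij}$ translates a designated off-antidiagonal $b$-entry with unit coefficient'' is not literally true: the diagonal targets $b_{ii}$ ($i>n/2$) pick up a factor $2$ from $X_{i,n-i}B_{n-i,n-i}+B_{i,n-i}X^t_{n-i,i}=2X_{i,n-i}$. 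As it happens these $2$'s sit off the diagonal of the Jacobian in a suitable ordering and so do not change its determinant, but the claim should be justified rather than asserted, and the stated explanation of the even-$n$ factor $2^{-n/2}$ in terms of $\mathfrak{s}_n$ and $f_1^2/4-f_2$ is misattributed---it arises from the $n/2$ entries $-f_j/2$ ($j$ odd) in $\sigma_0$. In short: the archimedean route can be made to work, but it requires the covolume bookkeeping you have not done, whereas the paper's $p$-adic/product-formula argument sidesteps it.
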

\begin{proof}
To compute ${\vert}\mc{J}{\vert}$, it suffices to compute ${\vert}\cJ{\vert}_p$ for each $p$ since $\mc{J} \in \Q^\times$. To do this, we construct convenient sets in $W_0(\Z_p)$ whose volumes are computed in two different ways: first, using Proposition \ref{prop-jac}, and second, via an $\F_p$-point count. Equating the two answers yields \mbox{the value of ${\vert}\mc{J}{\vert}_p$.}

\vspace*{0.1cm}
\noindent \emph{Case 1: $p > 2$}: Fix a nondegenerate polynomial $f \in U(\mathbb{F}_p)$, and let $\phi_p \colon W_0(\Z_p) \to \R$ be the indicator function of the set
$$\Sigma \defeq \left\{B \in W_0(\Z_p) : \on{inv}(B) \equiv f\,(\on{mod}p)\right\}.$$
By Proposition~\ref{lem-cantranslate}, the group $\mc{P}(\Z_p)$ acts simply transitively on the set of elements in $\Sigma$ having any fixed invariant polynomial. Hence, from Proposition \ref{prop-jac}, we obtain
\begin{align} \label{eq-prop28right}
\Vol(\Sigma)=
    & {\vert}\mc{J}{\vert}_p \cdot \on{Vol}(\mc{P}(\Z_p)) \int_{\substack{g \in U(\Z_p) \\ g\equiv f\,(\on{mod}p)}}  dg
    ={\vert}\mc{J}{\vert}_p \cdot \on{Vol}(\mc{P}(\Z_p)) \cdot p^{-\dim U}.
\end{align}
On the other hand, Proposition~\ref{lem-cantranslate} also implies that the group $\mc{P}(\F_p)$ acts simply transitively on the mod-$p$ reduction $\ol{\Sigma}$ of $\Sigma$. Thus, we have
\begin{equation} \label{eq-sigmafsize}
\#\ol{\Sigma} = \#\mc{P}(\mathbb{F}_p).
\end{equation}
Since $\Vol(\Sigma)=p^{-\dim W_0}\cdot \#\ol{\Sigma}$, $\Vol(\mc{P}(\Z_p))=p^{-\dim \mc{P}}\cdot \#\mc{P}(\mathbb{F}_p)$, and $\dim \mc{P} +\dim U =\dim W_0$, we obtain from \eqref{eq-prop28right} and \eqref{eq-sigmafsize} that ${\vert}\mc{J}{\vert}_p=1$ for all odd primes $p$.

\vspace*{0.1cm}
\noindent \emph{Case 2: $p = 2$}: The proof here is similar to Case 1, so we highlight the differences. Pick an integer $m\gg 1$, and set $q=2^m$. This time, we pick the polynomial $f(x)=x^n\in U(\Z/q\Z)$, and \mbox{we define the set}
$$\Sigma \defeq \left\{B \in W_0(\Z_2) : \begin{array}{c}{\vert}b_{i(n-i)}(B){\vert}_2 = 1 \text{  $\forall i \in \{1, \dots, \lfloor \tfrac{n}{2} \rfloor\}$, } \\ \on{inv}(B) \equiv f\,(\on{mod}q) \end{array}\right\}$$
As before, we obtain
\begin{equation*}
\Vol(\Sigma)
    ={\vert}\mc{J}{\vert}_2 \cdot \on{Vol}(\mc{P}(\Z_2)) \cdot q^{-\dim U}.
\end{equation*}
However, the situation over $\Z/q\Z$ is more complicated. Here, the mod-$q$ reduction $\ol{\Sigma}$ of $\Sigma$ breaks up into $2^{\lfloor\frac{n}2\rfloor}$ different $\mc{P}(\Z/q\Z)$-orbits. Indeed, the $\lfloor\frac{n}2\rfloor$ different coefficients labelled $-\frac{f_i}{2}$ in the image of $\sigma_0(f)$ in Section~\ref{sec-reduct} can be taken to be either $0$ or $\frac{q}{2}$, and this gives exactly $2^{\lfloor\frac{n}2\rfloor}$ different elements that are inequivalent under the action of $\mc{P}(\Z/q\Z)$. Therefore, this time we have
\begin{equation*}
\#\ol{\Sigma} =2^{\lfloor\frac{n}2\rfloor} \cdot \#\mc{P}(\Z/q\Z).
\end{equation*}
As before, we have $\on{Vol}(\Sigma) = q^{-\dim W_0} \cdot \#\ol{\Sigma}$, and it is easy to check that we have
$\Vol(\mc{P}(\Z_2)) = 2^{\lfloor\frac{n}2\rfloor}q^{-\dim \mc{P} } \cdot \#\mc{P}(\Z/q\Z)$ when $n$ is odd and $\Vol(\mc{P}(\Z_2))=q^{-\dim \mc{P}} \cdot \#\mc{P}(\Z/q\Z)$ when $n$ is even. It follows that ${\vert}\mc{J}{\vert}_2=1$ when $n$ is odd and ${\vert}\mc{J}{\vert}_2=2^{\frac{n}{2}}$ when $n$ is even.
\end{proof}

\section{Counting reducible $\GG(\Z)$-orbits on $W(\Z)$} \label{sec-2tors}

Let $n \geq 3$, $r$, and $s$ be nonnegative integers with $r+2s=n$.\footnote{Note that definitions of quantities introduced in what follows may implicitly depend on $r$.} In this section, we obtain asymptotics for the number of reducible orbits of $\GG(\Z)$ on $W(\Z)^{(r)}$ of bounded height, thereby proving Theorems \ref{thm-maincount2}, \ref{thm-big}, and~\ref{thm-acceptcubic} using Method I. The proofs using Method II are given in the next section. 

To simplify the exposition in the rest of this section, we introduce the following notation:
\begin{itemize}[leftmargin=13pt]
\item For any set $S \subset W(\Z)$, let $S_{\on{red}} \subset S$ be the subset of reducible elements of $S$; for $X > 0$, let $S_X \defeq \{B \in S : \on{H}(B) < X\}$; and as before, let $S_0 \defeq S\cap W_0(\Z)$ be the set of elements of $S$ that lie on the reducible hyperplane.
\item Let $G_0 \subset \GG(\bR)$ be a fixed nonempty open bounded set such that  $G_0^{-1}=G_0$ and $G_0$ is left- and right-$K$-invariant. As explained in \S\ref{sec-countpgl}, such a set can be constructed by starting with a nonempty open bounded set $G_0'$ and taking $G_0=K(G_0'\cup G_0'^{-1}) K$.
\item Define the multiset $\cB_\infty$ by
\begin{equation*}
\cB_\infty \defeq G_0\cdot \fundset^{(r)}\cap W_0(\R).
\end{equation*}
Set $\cB \defeq (\cB_\infty)_1$, and 
note that by the construction of $\fundset$, we have \mbox{$(\cB_\infty)_X=X\cB$.}
\item We define the quantity $C(\cB)$ by
\begin{equation*}
    C(\cB)\defeq\frac{1}{\wt\theta_r\on{Vol}(G_0)} \cdot
\int_{B\in \cB}{\vert}\lambda(B){\vert}dB,
\end{equation*}
where the volume of $G_0$ is computed using the Haar measure $dg$, and where $\wt\theta_r \defeq 2^{\lceil \frac{n}{2}\rceil}\theta_r$, with $\theta_r$ as defined in~\eqref{eq-thetarrr}.
\end{itemize}
\begin{itemize}[
leftmargin=13pt]
\item For a finite set $\Sigma$ of $\GG(\Z)$-orbits on $W(\Z)$, let $\#'\Sigma$ be the number of elements of $\Sigma$, where each $B \in \Sigma$ is counted with weight $1/\#\on{Stab}_{\GG(\Z)}(B)$.
\end{itemize}

This section is organized as follows. After setting up Bhargava's averaging method in \S6.1, we reduce to problem of counting reducible $G(\Z)$-orbits on $W(\Z)$ to a question of counting integer points in various regions of $W_0(\Z)$. This is accomplished in Proposition \ref{prop:redorbitinW0} by combining previously obtained ``main ball'' counting estimates. The advantage of this result over simply applying the averaging method over the nonreductive group $\mc{P}$ is that the integral in the right hand side of \eqref{eq-avgNT} goes over $T_1$ instead of $T$. This comes at a cost of a fairly large (but sufficient for our purposes) error term.
The regions in $W_0(\R)$ that we need to count in are very skewed. In \S6.2, we use a slicing method to express the point count in terms of certain constants that are expressed as products of local integrals. In \S6.3, we use our Jacobian change-of-variables from the previous section to evaluate the contribution to these constants from the infinite place. Finally, in \S6.4, we evaluate the contribution from finite places, and also carry out a squarefree sieve proving Theorems \ref{thm-maincount2}, \ref{thm-big}, and \ref{thm-acceptcubic}.

\subsection{Averaging over fundamental domains} \label{sec-cubicave}

As in \S\ref{sec-sl2avg}, we begin by applying Bhargava's averaging technique, developed in \cite{MR2183288,MR2745272}.
Let $\mc{F}$ be a fundamental domain for the action of $\GG(\Z)$ on $\GG(\R)$ that is box-shaped at infinity (recall that such an $\mc{F}$ exists by Theorem~\ref{thm-boxfunddomain}). Then, by analogy with~\eqref{eq-sl2pmmult}, we obtain the following:
\begin{equation} \label{eq-switch}
\#'\left(\frac{(W(\Z)^{(r)}_{\on{red}})_X}{\GG(\Z)}\right) = \frac{1}{\theta_r\Vol(G_0)} \int_{g \in \FF} \#\big(g G_0 \cdot \fundset^{(r)}_X \cap W(\Z)_\red\big) dg.
\end{equation}
Since $\FF$ is a box-shaped fundamental domain, it follows that we can write, up to a measure-0 set, $\FF$ as the disjoint union $\FF'\cup \mc{N}T_1(\{\pm\on{id}\} \backslash K)$ where $\mc{N}$ is the compact subset of $N(\R)$ determined in Section~\ref{sec-choosen}, $T_1 \defeq \{s = (s_1, \dots, s_{\lfloor \frac{n}{2}\rfloor}) \in T : \text{$s_i > c_1$ for all $i$}\}$ is a subset of $T$, and $\FF'$ is a subset of $\mc{N}\{s = (s_1, \dots, s_{\lfloor \frac{n}{2}\rfloor}) \in T : \text{$s_i \le c_1$ for some $i$}\} (\{\pm\on{id}\} \backslash K)$.
By combining counting results from \cite{MR3156850}, \cite{MR3719247}, and \cite{sqfrval}, we prove the following result, in partial analogy with Proposition~\ref{prop-afterave4}:
\begin{prop}\label{prop:redorbitinW0}
We have
\begin{equation}\label{eq-avgNT}
\displaystyle\#'\left(\frac{(W(\Z)^{(r)}_{\on{red}})_X}{\GG(\Z)}\right) = \displaystyle\frac{1}{\wt\theta_r\Vol(G_0)} \int_{us \in \ol{\mc{N}}T_1} \#\big(usX\mc{B} \cap W_0(\Z)\big)\delta(s)dud^\times s +O_\epsilon\bigl(X^{\frac{n^2+n-0.4}{2}+\epsilon}\bigr),
\end{equation}
where $\mc{B}$ is the multiset $\mc{B} \defeq (\mc{B}_\infty)_1 \defeq G_0\cdot\fundset^{(r)}_1\cap W_0(\R)$, and $\ol{\mc{N}}$ is defined in Section~\ref{sec-choosen}.
\end{prop}
\begin{proof}
Recall that $B = [b_{ij}]\in W_0(\R)$ if and only if $b_{ij}=0$ for all $i+j < n$. Borrowing terminology from \cite{sqfrval2}, we break up the integral on the right hand side of \eqref{eq-switch} into three regions, the main body, the shallow cusp, and the deep cusp, and we estimate the contributions from each region separately.

The main body is the region of the integral over $\FF$ consisting of $g\in\FF$ such that $g G_0\cdot\RR_X^{(r)}$ contains integer points $B\in W(\Z)$ with $b_{11} \neq 0$. The number of {\it reducible} elements in the main ball has been shown to be negligible in \cite[Proposition 10.7]{MR3156850} (for odd $n$) and \cite[Proposition 23]{MR3719247} (for even $n$). These estimates have been improved to a power-saving bound of $O_\epsilon(X^{\frac{n^2+n-0.4}{2}+\epsilon})$ in \cite[Proposition 2.6]{sqfrval} and \cite[Proposition 3.5]{sqfrval}, respectively.

Next, the shallow cusp is the region of the integral over $\FF$ consisting of $g\in\FF$ such that every $B\in g G_0\cdot\RR_X^{(r)}$ satisfies $|b_{11}|<1$, and such that $g G_0\cdot\RR_X^{(r)}$ contains integer points $B\in W(\Z)$ with $b_{\lfloor \frac{n-1}{2}\rfloor\lfloor\frac{n-1}{2}\rfloor} \neq 0$. The proofs of \cite[Proposition 10.5]{MR3156850} (for odd $n$) and \cite[Proposition 21]{MR3719247} (for even $n$) prove that the number of elements in the shallow cusp is bounded by $O(X^{\frac{n^2+n-2}{2}})$. Together, these bounds imply that we have
\begin{equation*}
\#'\left(\frac{(W(\Z)^{(r)}_{\on{red}})_X}{\GG(\Z)}\right) = \frac{1}{\theta_r\Vol(G_0)} \int_{g \in \FF} \#\big(g G_0 \cdot \fundset^{(r)}_X \cap W_0(\Z)\big) dg+O_\epsilon\bigl(X^{\frac{n^2+n-0.4}{2}+\epsilon}\bigr).
\end{equation*}

We now claim that the above estimate also holds when 
the region of integration $\FF$ is replaced by the region $\mc{N}T_1$. To prove this claim, we show that the above integral is negligible when $\FF$ is replaced by $\FF'$. However, this also follows from the previous bounds since $\FF'$ lies within the main body and the shallow cusp. Indeed, note that for an element $ntk\in\FF$ to lie within the deep cusp (i.e., in the cusp but not the shallow cusp), we must have $\log t_{\lfloor\frac{n-1}{2}\rfloor} \gg \log X$. Moreover, since $ntk\in\FF$, the condition $\log t_{\lfloor \frac{n-1}{2}\rfloor}\gg \log X$ automatically implies that $\log s_i\gg \log X$ for every $i$, and thus $ntk\not\in\FF'$. Finally, we replace $\mc{N}$ with $\ol{\mc{N}}$ (see Section~\ref{sec-choosen} for the definition), and to compensate, we divide by the order of the subgroup $\Gamma \subset K$, which is $2^{\lceil \frac{n}{2}\rceil}$. The result now follows from the definitions of $\theta_r$ and $\wt{\theta}_r$.
\end{proof}

\subsection{Slicing} \label{sec-cubicslice}
Just like in \S\ref{sec-sl2pmslicing}, Proposition \ref{prop-davenport} is not by itself sufficient to estimate the number of integral points in the region $usX\mc{B}$, which is typically quite skewed. 
Instead, we fiber the region $usX\mc{B}$ by the coefficients $b_{1(n-1)},\ldots,b_{\lfloor \frac{n}{2}\rfloor\lceil \frac{n}{2}\rceil}$. For any $b = (b_1, \dots, b_{\lfloor\frac{n}{2}\rfloor}) \in (\R \smallsetminus \{0\})^{\lfloor \frac{n}{2}\rfloor}$, and any $\mathcal{S}\subset W(\R)$, let $\mathcal{S}{\vert}_b$ denote the {\it slice of $\mathcal{S}$ at $b$}, i.e.,
\begin{equation*}
\mathcal{S}{\vert}_b\defeq
\bigl\{B\in\mathcal{S} \cap W_0(\R):b_{k(n-k)}(B)=b_k,\, \forall k\,\in \{1, \dots, \lfloor \tfrac{n}{2}\rfloor\}\bigr\}.
\end{equation*}
We can express the integrand of the right-hand side of~\eqref{eq-avgNT} as
\begin{equation} \label{eq-slicedint}
\#(usX\cB \cap W(\Z)) = \sum_{\substack{b \in \Z^{\lfloor \frac{n}{2} \rfloor} \\ b_i \neq 0 \, \forall i}} \#\bigl((usX\cB){\vert}_b \cap W(\Z)\bigr).
\end{equation}
By examining the action of $s$ on an element $B = [b_{ij}] \in W(\bR)$, we define the {\it weight} $w_{ij}=w(b_{ij})$ to be the quantity by which $s$ scales the matrix entry $b_{ij}$ for each pair $(i,j) \in \{1, \dots, n\}^2$.
For any subset $S$ of coefficients $b_{ij}$ of $W(\R)$, we let $w(S)$ denote the product of the weights of all the elements in $S$.
From Proposition~\ref{prop-davenport}, it follows that we have
\begin{equation}\label{eq-dave22}
\#\big((usX\cB){\vert}_b \cap W(\Z)\big) = \on{Vol}\big((usX\cB){\vert}_b\big)(1+O(X^{-1})),
\end{equation}
where the error term is seen to be $X^{-1}$ times the main term as follows. The weight of every coefficient in $W_0(\R)$ not being sliced over is $\gg 1$: indeed, note that $w(b_{k(n+1-k)})=1$ for each $k$, and that the remaining weights are all at least as big. As a consequence, the range of each coefficient varying in $(usX\cB)_b$ is $\gg X$, and the volume of $(usX\cB){\vert}_b$ is asymptotic to the product of the ranges of these varying coefficients. Proposition~\ref{prop-davenport} then yields a saving of size $X$, as necessary.

Now, since unipotent transformations preserve both the value of $b$ and the volume, we have
$\on{Vol}\big((usX\cB){\vert}_b\big) = \Vol\big((sX\cB){\vert}_b\big).$
Recall that we have normalized measures to ensure $\on{Vol}(\ol{\mc{N}})=1$. Therefore, \eqref{eq-avgNT}, \eqref{eq-slicedint}, and \eqref{eq-dave22} yield the following:
\begin{equation}\label{eq-afterslice}
\begin{array}{rcl}
\displaystyle\#'\left(\frac{(W(\Z)^{(r)}_{\on{red}})_X}{\GG(\Z)}\right)
 &=& \displaystyle\frac{1}{\wt\theta_r\Vol(G_0)} \sum_{\substack{b \in \Z^{\lfloor \frac{n}{2} \rfloor} \\ 
 b_i \neq 0 \, \forall i}}\int_{s \in T_1} \Vol\big((sX\mc{B}){\vert}_b\big)\delta(s)d^\times s \\
& & \qquad\qquad\qquad\qquad\qquad\qquad +O_\epsilon\bigl(X^{\frac{n^2+n-0.4}{2}+\epsilon}\bigr).
\end{array}
\end{equation}

Define an action of $T$ on $(\R \smallsetminus \{0\})^{\lfloor\frac{n}{2}\rfloor}$ by setting $s((b_i)_i)\defeq(w(b_{i(n-i)})b_i)_i$.  For fixed $X$ and $b=(b_i)_i\in (\R \smallsetminus \{0\})^{\lfloor\frac{n}2\rfloor}$, we write $X^{-1}s^{-1}(b)\eqdef\beta \eqdef(\beta_i)_i$. Let $S$ denote the set of coefficients of $W_0$, and write $S=S_0\sqcup S^\flat$, where $S_0$ is the set of all $b_{i(n-i)}$, and $S^\flat\defeq S\smallsetminus S_0$. Since $(sX\cB){\vert}_b=sX(\cB{\vert}_{\beta})$, it follows that we have
\begin{equation}\label{eq-renormvol}
\Vol\bigl((sX\cB){\vert}_b\bigr)=
\Vol\bigl(sX(\cB{\vert}_{\beta})\bigr)=
X^{\dim(S^\flat)}w(S^\flat)\Vol(\cB{\vert}_\beta).
\end{equation}
Consider the change-of-variables $s_i\mapsto\beta_i$, and note that $d^\times s=d^\times\beta \defeq \prod_i (d\beta_i/\beta_i)$.
Write $n=2g+1$ when $n$ is odd, and $n=2g+2$ when $n$ is even. We have $s_k=X\beta_kb_k^{-1}$ for all $1\leq k\leq g$. When $n$ is even, we further have $s_{g+1}=X^2\beta_g\beta_{g+1}b_g^{-1}b_{g+1}^{-1}$.
A direct computation now yields that
\begin{equation*}
\begin{array}{rccl}
X^{\dim S^\flat}w(S^\flat)\delta(s)&=&
X^{\dim S^\flat}\prod_{k=1}^{g}s_k^{2k}
&\mbox{for $n$ odd,}
\\[.1in]
X^{\dim S^\flat}w(S^\flat)\delta(s)&=&
X^{\dim S^\flat}(s_g^{g-1}s_{g+1}^{g+1})\prod_{k=1}^{g-1}s_k^{2k}
&\mbox{for $n$ even.}
\end{array}
\end{equation*}
Therefore, defining $\mc{Z}(a_1,\ldots,a_g)\defeq \prod_{k=1}^g a_k^{2k}$ when $n=2g+1$ is odd and $\mc{Z}(a_1,\ldots,a_{g+1})\defeq a_{g+1}^{g+1}\prod_{k=1}^g a_k^{2k}$ when $n=2g+2$ is even, we have
\begin{equation}\label{eq-stobeta}
X^{\dim S^\flat}w(S^\flat)\delta(s)=
X^{\frac{n^2+n}{2}}\frac{\mc{Z}(\beta)}{\mc{Z}(b)}.
\end{equation}

We now examine each individual summand on the right-hand side of \eqref{eq-afterslice}. For $b=(b_i)_i\in (\R \smallsetminus \{0\})^{\lfloor\frac{n}2\rfloor}$, let ${\vert}b{\vert}$ denote $({\vert}b_i{\vert})_i$. Recall that $G_0$ is $K$-invariant, and recall from Section~\ref{sec-choosen} that $K$ contains every diagonal matrix in $G(\Z)$ with each entry $\pm 1$. It follows that we have $\Vol(\cB{\vert}_b)=\Vol(\cB{\vert}_{{\vert}b{\vert}})$. Therefore, from \eqref{eq-renormvol} and \eqref{eq-stobeta}, we deduce that
\begin{equation*}
\int_{s\in T_1}\Vol\big((sX\mc{B}){\vert}_b\big)\delta(s)d^\times s=\frac{X^{\frac{n^2+n}{2}}}{\mc{Z}({\vert}b{\vert})}
\int_{\beta \in \R_{\geq 0}^{\lfloor\frac{n}{2}\rfloor}\smallsetminus T_1'}\mc{Z}(\beta)\Vol(\cB{\vert}_\beta)d^\times\beta,
\end{equation*}
for each $b=(b_i)_i\in (\Z \smallsetminus \{0\})^{\lfloor\frac{n}2\rfloor}$,
where $T_1'$ is a region contained in the set of those $\beta$ for which $\beta_i\ll X^{-1}$ for at least one $i$. Since $\cB$ is a bounded set and the integral of $\mc{Z}(\beta)d^\times\beta$ over $T_1'$ is clearly bounded by $O(X^{-1})$, we find that
\begin{equation}\label{eq-slicefin}
\begin{array}{rcl}
\displaystyle\int_{s\in T_1}\Vol\big((sX\mc{B}){\vert}_b\big)\delta(s)d^\times s&=&
\displaystyle\frac{X^{\frac{n^2+n}{2}}}{\mc{Z}({\vert}b{\vert})}
\int_{\beta\in\R_{>0}^{\lfloor\frac{n}{2}\rfloor}}\mc{Z}(\beta)\Vol(\cB{\vert}_\beta)d^\times\beta \\
& & \qquad\qquad\qquad\qquad\qquad\, +\,O\big(X^{\frac{n^2+n-2}{2}}\big)
\\[.2in]&=&
\displaystyle\frac{X^{\frac{n^2+n}{2}}}{\mc{Z}({\vert}b{\vert})}\int_{B\in\cB^+}\lambda(B)dB+O\big(X^{\frac{n^2+n-2}{2}}\big),
\end{array}
\end{equation}
where $\cB^+ \defeq \{B \in \cB : b_{i(n-i)}(B) > 0, \,\forall i \in \{1, \dots, \lfloor \frac{n}{2}\rfloor\}\}$. Substituting~\eqref{eq-slicefin} into \eqref{eq-afterslice} and summing over $b$, we immediately obtain the following:
\begin{equation}\label{eq:almostT1}
\#'\left(\frac{(W(\Z)^{(r)}_{\on{red}})_X}{\GG(\Z)}\right)=
C(\mc{B})\cdot C_n^{{\rm fin}}\cdot
X^{\frac{n^2+n}{2}}+O_\epsilon\big(X^{\frac{n^2+n-0.4}{2}+\epsilon}\big).
\end{equation}

To recover Theorem \ref{thm-maincount2} from~\eqref{eq:almostT1}, it remains to prove two facts: first, that the constant $C(\mc{B})$ is equal to $C_{n,r}^{{\rm inf}}$, and second, that the $\#'$-count and the $\#$-count differ by only a negligible amount.
We verify these facts in the next two subsections.

\subsection{Computing the constant} \label{sec-archvol}

In this subsection, we compute the value of $C(\cB)$. Recall that we defined $\cB$ to be the multiset $\cB \defeq G_0\cdot \fundset_1^{(r)}\cap W_0(\R)$. Since $G_0$ is right-$K$-invariant, we may write $G_0=\mc{S}K$ for some $\mc{S}\subset N(\R)T$. Hence, we have that $\cB = \mc{S}K \cdot \mc{R}^{(r)}_1 \cap W_0(\R) = \mc{S} \cdot (K\mc{R}_1^{(r)})_0$. 
Then, by analogy with Lemma~\ref{lem-whatsthefibersize}, we have the following lemma concerning the multiplicity of the fiber of $(K\fundset^{(r)})_0$ over $U(\R)^{(r)}$:
\begin{lemma}\label{lem-cover}
The map $\inv\colon (K\fundset^{(r)})_0\to U(\R)^{(r)}$
is $\wt{\theta}_r$ to $1$.
\end{lemma}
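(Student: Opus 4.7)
The plan is to identify the fiber of $\inv$ over a generic $f \in U(\R)^{(r)}$ in $(K\fundset^{(r)})_0$ with an explicit finite subset of $K$, and then count this subset using the Iwasawa decomposition, obtaining $\wt\theta_r = 2^{\lceil n/2\rceil}\theta_r$.

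I first observe that $\sigma(f) \in W_0(\R)$: the explicit section $\sigma_0$ from \S\ref{sec-reduct} has image inside $W_0$ (its nonzero entries all satisfy $i+j \geq n$), and $\sigma$ is obtained from $\sigma_0$ by a global rescaling, which preserves the linear subspace $W_0$. Setting $B_f \defeq \sigma(f)$ and $H \defeq \on{Stab}_{G(\R)}(B_f)$, so that $|H| = \theta_r$, the fiber of $\inv$ over $f$ in the multiset $(K\fundset^{(r)})_0$ consists of those $k \in K$ for which $k \cdot B_f \in W_0(\R)$. By Proposition~\ref{lem-cantranslate} applied over $\R$, the set $W_0(\R) \cap \inv^{-1}(f)$ is a single $\mc{P}(\R)$-orbit on which $\mc{P}(\R)$ acts simply transitively; in particular $\mc{P}(\R) \cap H = \{1\}$. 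It follows that $k \cdot B_f \in W_0(\R)$ if and only if $k \in \mc{P}(\R) H$, so the fiber is identified with the subset $K \cap \mc{P}(\R) H \subset K$.

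The key computation is to show $|K \cap \mc{P}(\R) H| = \wt\theta_r$ using the Iwasawa decomposition from~\S\ref{sec-coord}. Introducing $\mc{P}_+ \defeq N(\R) T$, we have $G(\R) = \mc{P}_+ K$ with $\mc{P}_+ \cap K = \{1\}$, while the full parabolic decomposes as $\mc{P}(\R) = \mc{P}_+ \Gamma$, where $\Gamma \defeq \mc{P}(\R) \cap K$ is the diagonal $\pm 1$ subgroup of order $2^{\lceil n/2 \rceil}$ described in~\S\ref{sec-choosen}. For each $h \in H$, let $h = p_h k_h$ be its Iwasawa decomposition, with $p_h \in \mc{P}_+$ and $k_h \in K$. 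I claim the map $\Phi \colon \Gamma \times H \to K$ defined by $\Phi(\gamma, h) \defeq \gamma k_h$ is a bijection onto $K \cap \mc{P}(\R) H$. For surjectivity, any $k \in K \cap \mc{P}(\R) H$ can be written $k = p_+ \gamma h$ with $p_+ \in \mc{P}_+, \gamma \in \Gamma, h \in H$; using that $\Gamma$ normalizes $\mc{P}_+$ (being diagonal), the Iwasawa decomposition of $\gamma h$ is $(\gamma p_h \gamma^{-1})(\gamma k_h)$, and uniqueness of Iwasawa applied to $k = 1 \cdot k$ then forces $k = \gamma k_h = \Phi(\gamma,h)$. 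For injectivity, $\gamma_1 k_{h_1} = \gamma_2 k_{h_2}$ implies $(\gamma_1 h_1)(\gamma_2 h_2)^{-1} \in \mc{P}_+$ (since it has trivial $K$-part), and since $\Gamma \subset \mc{P}(\R)$, we deduce $h_1 h_2^{-1} \in \mc{P}(\R) \cap H = \{1\}$, so $(\gamma_1, h_1) = (\gamma_2, h_2)$.

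The bijection $\Phi$ therefore yields $|K \cap \mc{P}(\R) H| = |\Gamma| \cdot |H| = 2^{\lceil n/2 \rceil}\theta_r = \wt\theta_r$, completing the proof. The main obstacle is the bookkeeping around the sign group $\Gamma$: the Iwasawa decomposition of $G(\R)$ uses only the positive-entry torus $T$, while the relevant parabolic $\mc{P}(\R)$ contains the additional finite group $\Gamma$, and the factor $2^{\lceil n/2 \rceil}$ in $\wt\theta_r$ arises precisely from the discrepancy $\mc{P}(\R) \cap K = \Gamma$.
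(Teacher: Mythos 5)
Your overall strategy is the same as the paper's: identify the fiber over a generic $f$ with the set of $k\in K$ for which $k\cdot\sigma(f)\in W_0(\R)$, relate that set to $\mc{P}(\R)$ and the stabilizer via Proposition~\ref{lem-cantranslate}, and count using the Iwasawa decomposition. The idea is right, but there is a concrete bookkeeping gap coming from the mismatch between the groups in which the various objects live. The parabolic $\mc{P}$, the stabilizer $H=\on{Stab}_{G(\R)}(B_f)$ of size $\theta_r$, and Proposition~\ref{lem-cantranslate} are all for the group $G$; but $K$ and the sign group $\Gamma$ of \S\ref{sec-choosen} live in $\on{O}_{\mc{A}}(\R)$, and for $n$ odd, $G=\on{SO}_{\mc{A}}$ is a proper (index $2$) subgroup of $\on{O}_{\mc{A}}$. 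Half of the paper's $\Gamma$ has determinant $-1$ and hence lies outside $\mc{P}(\R)\subset\on{SO}_{\mc{A}}(\R)$, so in fact $\mc{P}(\R)\cap K$ has order $2^{\lceil n/2\rceil-1}=2^{\lfloor n/2\rfloor}$, not $2^{\lceil n/2\rceil}$; your identification ``$\Gamma:=\mc{P}(\R)\cap K$ of order $2^{\lceil n/2\rceil}$'' is off by a factor of $2$. Correspondingly, the claim ``$k\cdot B_f\in W_0(\R)$ if and only if $k\in\mc{P}(\R)H$'' fails as stated: $-\on{id}\in K$ satisfies $(-\on{id})\cdot B_f=B_f\in W_0(\R)$, yet $-\on{id}\notin\on{SO}_{\mc{A}}(\R)\supseteq\mc{P}(\R)H$ for $n$ odd. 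An analogous mismatch occurs for $n$ even, where $G=\on{O}_{\mc{A}}/\mu_2$ and the action of $K$ on $W$ factors through the order-$2$ quotient $K\to K/\{\pm\on{id}\}$.

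The two slips happen to cancel: the full fiber is $\{\pm\on{id}\}\cdot\big(K\cap G(\R)\cap\mc{P}(\R)H\big)$, and applying your Iwasawa argument with the correct compact $K\cap G(\R)$ (so that $\mc{P}(\R)\cap K\cap G(\R)$ really is of order $2^{\lfloor n/2\rfloor}$, and $\mc{P}(\R)\cap H=\{1\}$ holds as you claim) and then multiplying by the extra factor $|\{\pm\on{id}\}|=2$ recovers $2\cdot 2^{\lfloor n/2\rfloor}\theta_r=2^{\lceil n/2\rceil}\theta_r=\wt\theta_r$. So the answer and the mechanism (Iwasawa decomposition plus a sign group) are the same as in the paper, which instead restricts to positive anti-diagonal entries and biject those fibers with the stabilizer. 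But as written, your proof conflates $K$, $\mc{P}(\R)$, $H$, and $\Gamma$ as if they sit inside a single group, and the order-$2^{\lceil n/2\rceil}$ claim for $\mc{P}(\R)\cap K$ is not correct; you need to keep track of where $\{\pm\on{id}\}$ enters and of the distinction between a maximal compact of $\on{O}_{\mc{A}}(\R)$ and one of $G(\R)$.
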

\begin{proof}
It suffices to prove that the map $\on{inv} \colon \{B \in (K\fundset^{(r)})_0 : b_{i(n-i)}(B) > 0,\, \forall i \in \{1, \dots, \lfloor \frac{n}{2}\rfloor\}\} \to U(\R)^{(r)}$ is $\theta_r$ to $1$. This is a consequence of the following two facts: first, the stabilizer in $G(\R)$ of any element of $W(\R)^{(r)}$ has size $\theta_r$, and second, the group $N(\R)T$ acts simply transitively on $\inv^{-1}(f)\cap W_0(\R)$ for any $f\in U(\R)^{(r)}$ by Lemma~\ref{lem-stabisntbig}. Indeed, given $B\in \RR^{(r)}$ having invariant polynomial $f$, and $pk\in\Stab_{\GG(\R)}(B)$ with $p\in N(\R)T$ and $\theta \in K$, the element $\theta B=p^{-1}B$ belongs to $(K\RR^{(r)})_0$ and has invariant polynomial $f$. This association yields the result.
\end{proof}

Now, by analogy with Proposition~\ref{prop-cbi}, we are in position to compute the constant $C(\cB)$:

\begin{prop}\label{prop:infC}
We have that $C(\mc{B})=C_{n,r}^{{\rm inf}}$.
\end{prop}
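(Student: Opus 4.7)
The plan is to apply Proposition~\ref{prop-jac} to convert the integral $\int_\mc{B}|\lambda(B)|\,dB$ into an iterated integral over $U(\R)^{(r)}_1$ and $\mc{P}(\R)$, and then to exploit the $K$-biinvariance and inverse-invariance of $G_0$ to reduce the resulting expression to a multiple of $\Vol(G_0)$. First, by Proposition~\ref{lem-cantranslate} the group $\mc{P}(\R)$ acts simply transitively on $\on{inv}^{-1}(f)_0$ for each nondegenerate $f$, so every $B \in W_0(\R)$ with nondegenerate invariant $f$ has a unique expression $B = h\sigma_0(f)$ with $h \in \mc{P}(\R)$. Applying Proposition~\ref{prop-jac} with $\phi$ equal to the multiplicity function $m(\cdot)$ of the multiset $\mc{B}$ yields
\[
\int_\mc{B}|\lambda(B)|\,dB \;=\; |\mc{J}|\int_{f \in U(\R)^{(r)}_1}\!\int_{h \in \mc{P}(\R)} m(h\sigma_0(f))\,dh\,df,
\]
and one checks that $m(h\sigma_0(f)) = \#(G_0 \cap h q_f^{-1}\Stab_{G(\R)}(\sigma(f)))$, where $q_f \in \mc{P}(\R)$ is the unique element with $q_f\sigma_0(f) = \sigma(f)$.

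Second, I would compute the inner integral. For each of the $\theta_r$ elements $\theta \in \Stab_{G(\R)}(\sigma(f))$, the Iwasawa decomposition $G(\R) = N(\R)T\cdot K$ allows us to write $q_f^{-1}\theta = p'k'$ with $p' \in N(\R)T$ and $k' \in K$. The right $K$-invariance of $G_0$ then reduces $\chi_{G_0}(hq_f^{-1}\theta)$ to $\chi_{G_0}(hp')$, and the right-invariance of $dh$ on $\mc{P}(\R)$ reduces $\int_h\chi_{G_0}(hp')\,dh$ to the $dh$-measure of $G_0 \cap \mc{P}(\R)$, which we denote $\mu_\mc{P}(G_0 \cap \mc{P}(\R))$; this value is independent of the choice of $\theta$. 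Summing over the $\theta_r$ elements of the stabilizer gives $\int_h m(h\sigma_0(f))\,dh = \theta_r\cdot\mu_\mc{P}(G_0 \cap \mc{P}(\R))$, which is independent of $f$.

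Third, I would evaluate $\mu_\mc{P}(G_0 \cap \mc{P}(\R))$ by decomposing $\mc{P}(\R)$ into its connected components. Writing $\mc{P}(\R)$ as the disjoint union of $2^{\lceil n/2\rceil}$ translates of the identity component $N(\R)T$ by elements of the sign group $\Gamma \subset K$ (from \S\ref{sec-choosen}), and invoking the right $K$-invariance of $G_0$, gives $G_0 \cap \mc{P}(\R) = \bigsqcup_{\gamma \in \Gamma}(G_0 \cap N(\R)T)\gamma$. Since $dh$ is right-invariant, each of these cosets contributes the same $\mu_\mc{P}$-measure. A symmetrization argument based on $G_0 = G_0^{-1}$ and the involution $(u,s)\mapsto(s^{-1}u^{-1}s,s^{-1})$ on $N(\R)T$ then identifies $\mu_\mc{P}(G_0 \cap N(\R)T)$ with $\Vol(G_0)$, yielding $\mu_\mc{P}(G_0 \cap \mc{P}(\R)) = 2^{\lceil n/2\rceil}\Vol(G_0)$.

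Combining the previous steps with Proposition~\ref{prop-jacobian} (which evaluates $|\mc{J}| = 1$ for $n$ odd and $|\mc{J}| = 2^{-n/2}$ for $n$ even), we conclude
\[
C(\mc{B}) \;=\; \frac{|\mc{J}|\cdot\theta_r\cdot 2^{\lceil n/2\rceil}\Vol(G_0)\cdot\mc{V}^{(r)}(1)}{\wt\theta_r\Vol(G_0)} \;=\; |\mc{J}|\cdot\mc{V}^{(r)}(1) \;=\; C_{n,r}^{\on{inf}},
\]
as claimed. The main obstacle is the identification $\mu_\mc{P}(G_0 \cap N(\R)T) = \Vol(G_0)$: the right Haar measure on $\mc{P}(\R)$ and the restriction to $N(\R)T$ of the Haar measure on $G(\R)$ differ by the square of the modular character $\delta$, and reconciling them requires a delicate change-of-variables that leverages both the inverse-invariance of $G_0$ and its $K$-biinvariance.
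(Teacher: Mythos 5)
Your overall strategy matches the paper's: apply Proposition~\ref{prop-jac} to convert the integral of $|\lambda|$ over $\mc{B}$ into a fiber integral over $\mc{P}(\R)\times U(\R)^{(r)}$, reduce the inner integral to a Haar volume of a subset of $\mc{P}(\R)$, and then compare that volume to $\Vol(G_0)$ using the $K$-biinvariance and inverse-invariance of $G_0$. The final bookkeeping with $|\mc{J}|$ and $\wt\theta_r$ reproduces the right answer. However, there are two places where your route diverges from the paper's and where your proposal has genuine gaps.

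First, for the fiber computation you expand $m(h\sigma_0(f))$ as a sum of $\theta_r$ indicator functions indexed by $\Stab_{G(\R)}(\sigma(f))$, obtaining $\int_h m\,dh = \theta_r\cdot\mu_{\mc{P}}(G_0\cap\mc{P}(\R))$. The paper instead packages this via Lemma~\ref{lem-cover} (\textit{lem-cover}), which asserts that $\on{inv}\colon(K\fundset^{(r)})_0\to U(\R)^{(r)}$ is $\wt\theta_r$-to-$1$; combining this with $\cB=\mc{S}\cdot(K\fundset^{(r)}_1)_0$ gives $\int_h m\,dh=\wt\theta_r\Vol_{\rm right}(\mc{S})$ directly. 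Your expression must therefore satisfy $\mu_{\mc{P}}(G_0\cap\mc{P}(\R))=2^{\lceil n/2\rceil}\Vol_{\rm right}(\mc{S})$, which (since $G_0\cap\mc{P}(\R)=\mc{S}\cdot(K\cap\mc{P}(\R))$) requires $\#(K\cap\mc{P}(\R))=2^{\lceil n/2\rceil}$. Your justification of this via the decomposition $\mc{P}(\R)=\bigsqcup_{\gamma\in\Gamma}N(\R)T\gamma$ is not quite right: $\Gamma$ is the sign group inside $\on{O}_{\mc{A}}(\Z)$ of order $2^{\lceil n/2\rceil}$, and it is not literally the component group of the parabolic $\mc{P}(G)(\R)$ for $G=\on{SO}_{\mc{A}}$ or $\on{O}_{\mc{A}}/\mu_2$ (e.g.\ for $n$ odd the determinant-one condition cuts it in half). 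Getting this factor right is exactly the content the paper isolates in Lemma~\ref{lem-cover} (including the extra $\pm\on{id}$-multiplicity for $n$ odd), and your sketch does not account for it.

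Second, and more substantively, the identity $\mu_{\mc{P}}(G_0\cap N(\R)T)=\Vol(G_0)$ --- equivalently $\Vol_{\rm right}(\mc{S})=\Vol_{\rm left}(\mc{S})$ --- is precisely what must be proved, and you correctly flag it as the main obstacle but do not prove it; the involution $(u,s)\mapsto(s^{-1}u^{-1}s,s^{-1})$ only helps if one first knows $\mc{S}=\mc{S}^{-1}$, which is itself nontrivial. The paper resolves this cleanly and without any modular-character bookkeeping in Lemma~\ref{lem-volswitch}: since $G_0=\mc{S}K$ is left-$K$-invariant and inversion-invariant, one has $K\mc{S}\subset\mc{S}K=G_0=G_0^{-1}=K\mc{S}^{-1}$, so by uniqueness of the Iwasawa decomposition $\mc{S}=\mc{S}^{-1}$ and hence $K\mc{S}=\mc{S}K$, giving $\Vol_{\rm right}(\mc{S})=\Vol(K\mc{S})=\Vol(\mc{S}K)=\Vol_{\rm left}(\mc{S})$ at once. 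You should substitute this set-theoretic argument for your ``delicate change-of-variables,'' both because it is simpler and because it is what actually closes the gap.
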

\begin{proof}
We have
\begin{equation} \label{eq-jacsimplify}
\begin{array}{rcl}
C(\cB)&=&\displaystyle\frac{1}{\theta_r\Vol(G_0)}
\int_{B\in\mc{S}\cdot (K\fundset^{(r)}_1)_0}
{\vert}\lambda(B){\vert}dB
\\[.2in]&=&\displaystyle
\frac{{\vert}\mc{J}{\vert}\Vol_{\on{right}}(\mc{S})\Vol\bigl(\{f \in U(\R)^{(r)} : \on{H}(f) < 1\}\bigr)}{\Vol(\mc{S}K)}
\end{array}
\end{equation}
where the second equality follows by applying the Jacobian change-of-variables established in Proposition~\ref{prop-jac} along with Lemma \ref{lem-cover}, and where $\on{Vol}_{\on{right}}(\mc{S})$ denotes the volume of $\mc{S}$ with respect to the right Haar measure on $\mc{P}(\R)$. But since $\on{Vol}(K)$ is normalized to be equal to $1$, we have that $\on{Vol}_{\on{right}}(\mc{S}) = \on{Vol}(K\mc{S})$. In complete analogy with Lemma~\ref{lem-volswitch}, we have that $\on{Vol}(K\mc{S}) = \on{Vol}(\mc{S}K)$. Combining this with~\eqref{eq-jacsimplify} and the computation of ${\vert}\mc{J}{\vert}$ performed in Proposition~\ref{prop-jacobian} completes the proof of Proposition~\ref{prop:infC}.
\end{proof}

For the last missing ingredient in the proof of Theorem \ref{thm-maincount2}, we prove that there is no asymptotic difference between the $\#'$-count and the $\#$-count. To this end, let $W(\Z)_\bs \defeq \{B \in W(\Z)_{\on{red}}^{(r)}: \on{Stab}_{\GG(\Z)}(B) \neq 1\}$. Then we have the \mbox{following result:}
\begin{prop} \label{prop-bs}
We have that
\begin{equation*}
\displaystyle\#\left(\frac{(W(\Z)_\bs)_X}{\GG(\Z)}\right)
=O_\epsilon\big(X^{\frac{n^2+n-0.4}{2}+\epsilon}\big)
\end{equation*}
\end{prop}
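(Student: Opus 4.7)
To prove Proposition~\ref{prop-bs}, I would first reduce to elements on the reducible hyperplane. The bound of Propositions~2.5,~2.6,~3.4, and~3.5 of~\cite{sqfrval}, already invoked in~\S\ref{sec-cubicave}, implies that the number of $\GG(\Z)$-orbits on $W(\Z)^{(r)}_{\on{red}}$ of height at most $X$ admitting no representative on $W_0(\Z)$ is $O_\epsilon(X^{(n^2+n-0.4)/2+\epsilon})$, which already fits within the claimed bound. Since the stabilizer size depends only on the $\GG(\Z)$-orbit, it therefore suffices to bound the number of $\GG(\Z)$-orbits on $(W(\Z)_\bs)_X$ that admit a representative on $W_0(\Z)$.

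Next, fix a nondegenerate $B\in W_0(\Z)$ with $\on{Stab}_{\GG(\Z)}(B)\ne\{1\}$; then a fortiori $\on{Stab}_{\GG(\Q)}(B)\ne\{1\}$. By Proposition~\ref{lem-cantranslate} over $\Q$, we have $\on{Stab}_{\mc{P}(\Q)}(B)=\{1\}$, so any nontrivial $g\in\on{Stab}_{\GG(\Q)}(B)$ lies in $\GG(\Q)\smallsetminus\mc{P}(\Q)$. This nontriviality is equivalent to the existence of a second element $B'\in W_0(\Z)$ with $\on{inv}(B')=f:=\on{inv}(B)$ that is $\mc{P}(\Q)$-equivalent (by the simple transitivity in Proposition~\ref{lem-cantranslate}) but \emph{not} $\mc{P}(\Z)$-equivalent to $B$. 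Theorem~\ref{thm-stronglocglob} then forces $B$ and $B'$ to lie in distinct $\mc{P}(\Z_p)$-orbits at some prime $p$, and at any such $p$, Proposition~\ref{lem-cantranslate} applied over $\Z_p$ forces $p\mid\on{disc}(f)$. A further local computation --- comparing the mod-$p$ reductions of $B$ and $B'$, which are forced to agree within a single $\mc{P}(\F_p)$-orbit by Proposition~\ref{lem-cantranslate} over $\F_p$ --- upgrades this to $p^2\mid\on{disc}(f)$.

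To finish, I would split the count of admissible $f$ according to the size of the witnessing prime. For witnessing primes $p > X^{1/5}$, the squarefree sieve on $U(\Z)^{(r)}$ of~\cite[Theorem~1.5]{sqfrval} bounds the number of $f$ of height at most $X$ with $p^2\mid \on{disc}(f)$ for some such $p$ by $O_\epsilon(X^{(n^2+n)/2-1/5+\epsilon})$, and each such $f$ supports only $O(1)$ $\mc{P}(\Z)$-orbits of bounded height in $W_0(\Z)$ above it. For witnessing primes $p\le X^{1/5}$, the bad elements above each such $f$ are confined to a $p$-adic condition of density $O(p^{-1})$ in $W_0(\Z_p)$ (coming from the requirement that two distinct $\mc{P}(\Z_p)$-orbits coexist above $f$); applying the slicing volume estimate of~\S\ref{sec-cubicslice} to this thin set and summing over $p \leq X^{1/5}$ yields a matching total contribution of $O_\epsilon(X^{(n^2+n)/2-1/5+\epsilon})$.

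The main obstacle is the local analysis in the second paragraph upgrading $p\mid\on{disc}(f)$ to $p^2\mid\on{disc}(f)$: this requires extending Proposition~\ref{lem-cantranslate} to mildly singular fibers by explicitly tracking the $\mc{P}(\Z_p)$-orbit structure of $\on{inv}^{-1}(f)\cap W_0(\Z_p)$ when $p\mid\on{disc}(f)$, and verifying that the $p$-adic density of configurations supporting a second distinct $\mc{P}(\Z_p)$-orbit decays at least like $p^{-1}$. Once this local density estimate is in place, both halves of the large-versus-small prime split reduce to routine applications of existing sieve and slicing machinery.
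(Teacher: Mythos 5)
Your approach is genuinely different from the paper's and, unfortunately, the pivotal step in your second paragraph does not hold. You claim that $\on{Stab}_{\GG(\Q)}(B)\neq\{1\}$ is \emph{equivalent} to the existence of a second $\mc{P}(\Z)$-orbit in $\on{inv}^{-1}(f)\cap W_0(\Z)$, but only the converse implication is valid. Indeed, a nontrivial $g\in\on{Stab}_{\GG(\Z)}(B)$ (or even $\on{Stab}_{\GG(\Q)}(B)$) satisfies $g\cdot B = B$, so it produces no new element of $W_0(\Z)$ at all --- let alone one in a distinct $\mc{P}(\Z)$-orbit. More precisely, the map $g\mapsto h_g^{-1}g$, where $h_g\in\mc{P}(\Q)$ is the unique element with $h_g\cdot B = g\cdot B$, gives an injection $\mc{P}(\Z)\backslash\{g\in\GG(\Z):g\cdot B\in W_0(\Z)\}\hookrightarrow\on{Stab}_{\GG(\Q)}(B)$; this shows that a second $\mc{P}(\Z)$-orbit forces a nontrivial stabilizer (which is the statement used in \S\ref{sec-whatsnew}), but it does not run the other way. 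Your subsequent chain of deductions --- distinct $\mc{P}(\Z_p)$-orbits at some $p$, hence $p\mid\on{disc}(f)$, hence $p^2\mid\on{disc}(f)$ --- therefore has nothing to hang on, even setting aside the further (acknowledged) gap of upgrading $p\mid\on{disc}(f)$ to $p^2\mid\on{disc}(f)$.

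The paper's actual proof proceeds along an entirely different route. It stays within the slicing framework of \S\ref{sec-cubicslice}: after reducing to a sum over slices $b$ of $\int_{s\in T_1}\#(sX\mc{B}|_b\cap W(\Z)_\bs)\delta(s)\,d^\times s$, it observes that a nontrivial $\GG(\Z)$-stabilizer forces a nontrivial $\GG(\F_p)$-stabilizer for the reduction of $B$ modulo $p$, and then shows that for each prime $p\nmid\prod b_i$ a \emph{positive proportion} of elements of $W_0(\F_p)|_b$ (namely those whose invariant polynomial is irreducible, or linear-times-irreducible when $n$ is even, over $\F_p$) have trivial $\GG(\F_p)$-stabilizer, citing \cite[\S10.7]{MR3156850} and \cite[proof of Proposition~23]{MR3719247}. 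The required power saving in each slice then follows from the Selberg sieve as in \cite{MR3264252}, and the estimate is summed over $b$ using absolute convergence. In short: the paper's argument is a mod-$p$ density-of-trivial-stabilizer argument run through the Selberg sieve inside the cuspidal slicing, not a reduction to squarefull discriminants. If you wished to salvage a discriminant-based approach, you would need a direct argument (independent of the $\mc{P}(\Z)$-orbit count) that a nontrivial $\GG(\Z)$-stabilizer forces the invariant polynomial to have a non-squarefree discriminant, and I do not believe that implication is true in general.
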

\begin{proof}
Following \eqref{eq-avgNT} and \eqref{eq-afterslice}, we have that
\begin{equation*}
\begin{array}{rcl}
\displaystyle\#\left(\frac{(W(\Z)_\bs)_X}{\GG(\Z)}\right) &\ll& \displaystyle\int_{us \in \mc{N}T_1} \#\big(usX\mc{B} \cap W(\Z)_\bs\big)\delta(s)dud^\times s
\\[.2in]&\ll&\displaystyle
\sum_{\substack{b \in \Z^{\lfloor \frac{n}{2} \rfloor} \\ b_i \neq 0 \, \forall i}}\int_{s\in T_1}\#\big(sX\mc{B}{\vert}_b \cap (W(\Z)_\bs)\big)\delta(s)dud^\times s,
\end{array}
\end{equation*}
up to an error of $X^{\frac{n^2+n-0.4}{2}+\epsilon}$.

Next, note that if $B\in W(\Z)_\bs$, then the reduction of $B$ mod $p$ has a nontrivial stabilizer for every prime $p$. Fix $b=(b_i)_i \in (\Z \smallsetminus \{0\})^{\lfloor \frac{n}{2} \rfloor}$, and let $p$ be a prime such that $p\nmid\prod b_i$. We claim that a positive proportion of elements in $W_0(\F_p){\vert}_b\defeq \bigl\{B\in W_0(\F_p):b_{k(n-k)}(B)=b_k,\, \forall k\,\in \{1, \dots, \lfloor \tfrac{n}{2}\rfloor\}\bigr\}$ have trivial stabilizer in $\GG(\F_p)$. Indeed, this is true for every $B\in W(\F_p)$ whose invariant polynomial is irreducible over $\F_p$ when $n$ is odd (see~\cite[Section 10.7]{MR3156850}), and for every $B \in W(\F_p)$ whose invariant polynomial is a linear polynomial times an irreducible polynomial over $\F_p$ when $n$ is even (see~\cite[proof of Proposition~23]{MR3719247}). A positive proportion of invariant polynomials satisfy these splitting criteria, and the fiber in $W_0(\F_p)|_b$ over each polynomial has the same size (in fact, this size is $\#N(\F_p)$). A power saving estimate for each summand in the above equation now follows by using the Selberg sieve analogously to the argument in \cite{MR3264252}. (Indeed, a power saving bound from the Selberg sieve only requires the ability to count these integer points with a power saving error term, and requires a positive proportion of mod $p$ residue classes to be avoided. We omit the details of the computation of the precise power saving exponent since the argument closely follows that in \cite{MR3264252}.)
The proposition now follows since the sum over $b$ converges absolutely.
\end{proof}

Theorem \ref{thm-maincount2} now follows from \eqref{eq:almostT1} and Propositions \ref{prop:infC} and \ref{prop-bs}.

\subsection{Congruence conditions}\label{sec-congrest}

We now prove Theorem~\ref{thm-big}. Let $S\subset W(\Z)^{(r)}$ be a big family, and suppose for now that $S$ is defined by congruence conditions at finitely many places (i.e., suppose that $S_p = W(\Z_p)$ for all primes $p \gg 1$). 
For each $b \in (\Z_p \smallsetminus \{0\})^{\lfloor \frac{n}{2} \rfloor}$, let $$(S_p)_0{\vert}_{b} \defeq \{B \in (S_p)_0 : b_{i(n-i)}(B) = b_i,\, \forall i \in \{1, \dots, \lfloor \tfrac{n}{2}\rfloor\}\},$$ and for each $b \in (\Z \smallsetminus \{0\})^{\lfloor \frac{n}{2} \rfloor}$, let $\nu(S_0{\vert}_b) \defeq \prod_p \on{Vol}((S_p)_0{\vert}_b)$ denote the density of the slice $S_0{\vert}_b$ in $W_0(\Z){\vert}_b$; here, each $p$-adic volume $\on{Vol}((S_p)_0{\vert}_b)$ is computed with respect to the Euclidean measure on $W_0(\Z_p){\vert}_b$, normalized so that $W_0(\Z_p){\vert}_b$ has volume $1$.
Then an argument identical to the one used to obtain \eqref{eq-afterslice} yields the following asymptotic formula:
\begin{equation}\label{eq-ascong}
\begin{array}{rcl}
\displaystyle\#\left(\frac{(S_{\on{red}})_X}{\GG(\Z)}\right)
 &=& \displaystyle\frac{1}{\theta_r\Vol(G_0)} \sum_{\substack{b \in \Z^{\lfloor \frac{n}{2} \rfloor} \\ b_i \neq 0 \, \forall i}}\nu(S_0{\vert}_b)\int_{s \in T_1} \Vol\big((sX\mc{B}){\vert}_b\big)\delta(s)d^\times s \\
 & & \qquad\qquad\qquad\qquad\qquad\qquad\qquad 
+\,O_\epsilon\bigl(X^{\frac{n^2+n-0.4}{2}+\epsilon}\bigr).
\end{array}
\end{equation}
The $\GG(\Z)$-invariance of $S$ implies that $\nu(S_0{\vert}_b)=\nu(S_0{\vert}_{{\vert}b{\vert}})$ for all $b$. Hence,  \eqref{eq-slicefin} and \eqref{eq-ascong} yield the following estimate:
\begin{equation} \label{eq-congruent}
\displaystyle\#\left(\frac{(S_{\on{red}})_X}{\GG(\Z)}\right)
=
C(\cB) \cdot \Bigl(\sum_{\substack{b \in \Z^{\lfloor \frac{n}{2} \rfloor} \\ b_i > 0 \, \forall i}}\frac{\nu(S_0{\vert}_b)}{\mc{Z}(b)}\Bigr)\cdot X^{\frac{n^2+n}{2}} + O_\epsilon\big(X^{\frac{n^2+n-0.4}{2}+\epsilon}\big).
\end{equation}

To evaluate the sum over $b$ on the right-hand side of~\eqref{eq-congruent}, we use the following property, which is a consequence of the fact that $S$ is a big family: if $p$ is a prime and $b,b' \in (\Z_p \smallsetminus \{0\})^{\lfloor \frac{n}{2} \rfloor}$ are elements such that ${\vert}b_i{\vert}_p = {\vert}b_i'{\vert}_p$ for each $i$, then $\on{Vol}((S_p)_0{\vert}_b) = \on{Vol}((S_p)_0{\vert}_{b'})$. By repeatedly using this property, we obtain the following chain of equalities:
\begin{equation*} 
\begin{array}{rcl}
   \displaystyle \sum_{\substack{b \in \Z^{\lfloor \frac{n}{2} \rfloor} \\ b_i > 0 \, \forall i}}\frac{\nu(S_0{\vert}_b)}{\mc{Z}(b)} & = &  \displaystyle \prod_p \sum_{\substack{(i) \in \Z^{\lfloor \frac{n}{2} \rfloor} \\ i_j \geq 0\, \forall j}}
\frac{\on{Vol}\left((S_p)_0{\vert}_{\big(p^{i_1},\dots, p^{i_{\lfloor \frac{n}{2}\rfloor}}\big)}\right)}{\mc{Z}\big(p^{i_1},\dots, p^{i_{\lfloor \frac{n}{2}\rfloor}}\big)}\\
  \displaystyle  & = &  \displaystyle \prod_p \Big(1 - \frac{1}{p}\Big)^{-\lfloor \frac{n}{2} \rfloor}\int_{\substack{b \in \Z_p^{\lfloor \frac{n}{2}\rfloor} \\ b_i \neq 0\,\forall i}} \frac{\on{Vol}\bigl((S_p)_0{\vert}_{({\vert}b_1{\vert}_p^{-1},\dots,{\vert}b_{\lfloor \frac{n}{2}\rfloor}{\vert}_p^{-1})}\bigr)}{\mc{Z}\big({\vert}b_1{\vert}_p^{-1},\dots,{\vert}b_{\lfloor \frac{n}{2}\rfloor}{\vert}_p^{-1}\big)\prod_i {\vert}b_i{\vert}_p} db \\
  &=& \displaystyle \prod_p\Big(1-\frac{1}{p}\Big)^{-\lfloor\frac{n}{2}\rfloor} \int_{\substack{b \in \Z_p^{\lfloor \frac{n}{2} \rfloor} \\ b_i \neq 0\, \forall i}} \left\vert \frac{\mc{Z}(b)}{\prod_i b_i}\right\vert_p \Vol((S_p)_0{\vert}_b) db \\
&=& \displaystyle \prod_p\Big(1-\frac{1}{p}\Big)^{-\lfloor\frac{n}{2}\rfloor} \int_{B\in (S_p)_0} {\vert}\lambda(B){\vert}_p dB,
    \end{array}
\end{equation*}
where the second line above follows by partitioning the region of integration $(\Z_p \smallsetminus \{0\})^{\lfloor \frac{n}{2}\rfloor}$ into level sets for the integrand and summing over all such level sets, and where the last line above follows just as in~\eqref{eq-slicefin}.

It remains to handle the case where $S$ is a big family defined by congruence conditions at infinitely many places. By abuse of notation, let $\mc{Z}$ be the polynomial on $W_0$ defined by $\mc{Z}(B) \defeq \mc{Z}\big(b_{1(n-1)}(B),\dots,b_{\lfloor \frac{n}{2}\rfloor \lceil \frac{n}{2}\rceil}(B)\big)$. Then the case of infinitely many places follows from the case of finitely many places by using the following bound on the number of $G(\Z)$-equivalence classes of elements with large $\mc{Z}$-value, in conjunction with an inclusion-exclusion sieve:\footnote{Just as in \S\ref{sec-sl2conditions}, we do not flesh out the sieving argument here to avoid being repetitive, because in Section~\ref{sec-sieve} (to follow), we use the same sort of argument to prove Theorem~\ref{thm-acceptcubic}.}
\begin{theorem} \label{thm-bsw}
Fix a real number $M > 0$. Then the number of $G(\Z)$-equivalence classes of $($or equivalently, $\mc{P}(\Z)$-orbits of$)$ elements of the set $\{B \in W_0(\Z) : \on{H}(B) < X,\, {\vert}\mc{Z}(B){\vert}
\geq M^2\}$ is bounded by $O\big(X^{\frac{n^2+n}{2}}/M\big) + O_\epsilon\big(X^{\frac{n^2+n-0.4}{2}+\epsilon}\big)$, where the implied constant is independent of $M$.
\end{theorem}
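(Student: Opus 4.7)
The plan is to adapt the slicing argument of \S\ref{sec-cubicave}--\S\ref{sec-cubicslice} to count only those $\mc{P}(\Z)$-orbits lying in the large-$\mc{Z}$ region. The parenthetical in the theorem statement (which follows from Proposition~\ref{prop-bs} and Proposition~\ref{lem-cantranslate}, exactly as in part (a) of Method II in \S\ref{sec-whatsnew}) lets me reduce to bounding $\mc{P}(\Z)$-orbits in the set $\{B \in W_0(\Z) : \on{H}(B) < X,\ |\mc{Z}(B)| \geq M^2\}$.

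I would first observe that $\mc{Z}(B)$ depends only on the tuple $b = (b_{1(n-1)}(B), \ldots, b_{\lfloor n/2\rfloor \lceil n/2\rceil}(B))$ of pivotal antidiagonal coefficients, which is precisely the slicing variable introduced in \eqref{eq-slicedint}; thus the restriction $|\mc{Z}(B)| \geq M^2$ translates to a restriction $\mc{Z}(|b|) \geq M^2$ on the range of summation. Running the averaging plus slicing plus Davenport argument of \S\ref{sec-cubicave}--\S\ref{sec-cubicslice} verbatim, but summing only over $b$ with $\mc{Z}(|b|) \geq M^2$, yields
\begin{equation*}
\#\biggl(\frac{\{B \in W_0(\Z) : \on{H}(B) < X,\, |\mc{Z}(B)| \geq M^2\}}{\mc{P}(\Z)}\biggr) \leq C \cdot X^{\tfrac{n^2+n}{2}} \!\!\! \sum_{\substack{b \in \Z_{>0}^{\lfloor n/2\rfloor} \\ \mc{Z}(b) \geq M^2}} \frac{1}{\mc{Z}(b)} + O_\epsilon\bigl(X^{\tfrac{n^2+n-0.4}{2}+\epsilon}\bigr),
\end{equation*}
for an absolute constant $C > 0$. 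The Davenport error is uniform in $M$ because shrinking the range of summation can only shrink the projection volumes governing it.

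What remains is the tail estimate $T(M) \defeq \sum_{\mc{Z}(b) \geq M^2} 1/\mc{Z}(b) = O(M^{-1})$, where $\mc{Z}(b) = \prod_k b_k^{e_k}$ with every $e_k \geq 2$ (by the definition of $\mc{Z}$ following \eqref{eq-stobeta}). I would prove this by a hyperbola-style decomposition: split the domain according to which coordinate $b_{k_0}$ maximizes $b_{k_0}^{e_{k_0}}$, and in each piece, sum first over the remaining coordinates --- whose contributions converge absolutely since $e_k \geq 2$ gives $\sum_{b \geq 1} b^{-e_k} < \infty$ --- and then over $b_{k_0}$ using the elementary estimate $\sum_{b \geq Y} b^{-e_{k_0}} \ll Y^{1-e_{k_0}} \leq Y^{-1}$, with $Y$ chosen so that $Y^{e_{k_0}} \gg M^2$.

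The main obstacle is the edge case in which the minimum exponent $2$ is attained at more than one coordinate --- this happens only for $n = 4$, where $\mc{Z}(b_1,b_2) = (b_1 b_2)^2$ and $T(M)$ becomes the divisor sum $\sum_{d \geq M} \tau(d)/d^2 = O(M^{-1}\log M)$. To recover the clean $O(M^{-1})$ bound, one uses the observation that the existence of $B \in W_0(\Z)$ with $\on{H}(B) < X$ and $|\mc{Z}(B)| \geq M^2$ forces $M \ll X^{O(1)}$ (since every entry of $B$ is polynomially bounded in $\on{H}(B)$), so the extraneous $\log M$ factor may be absorbed into the power-saving error $X^\epsilon$ by slightly enlarging $\epsilon$.
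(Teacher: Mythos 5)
Your approach is the same as the paper's: the paper's entire proof of Theorem~\ref{thm-bsw} is the single sentence ``The required bound follows immediately from the proof of Theorem~\ref{thm-maincount2} by simply summing~\eqref{eq-slicefin} over only those $b$ such that $|\mc{Z}(b)| \geq M^2$,'' which is precisely your plan of restricting the slicing sum and bounding the resulting tail $\sum_{\mc{Z}(|b|) \geq M^2} \mc{Z}(|b|)^{-1}$. Your reduction of the equivalence-class count to $\mc{P}(\Z)$-orbits, your uniformity observation for the Davenport error, and your hyperbola-style tail estimate are all sound, and your identification of the exponents of $\mc{Z}$ as all being $\geq 2$ is correct.

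You also deserve credit for spotting something the paper glosses over: for $n=4$ the exponent pattern is $(2,2)$, the minimum exponent $2$ is attained twice, and the tail sum is genuinely $\Theta(M^{-1}\log M)$ rather than $O(M^{-1})$. However, your proposed repair does not actually recover the theorem as literally stated. Absorbing $\log M \ll \log X$ into an $X^{\epsilon}$ only helps in the regime $M \gg X^{0.2}$, where the whole main term is already dominated by $O_\epsilon(X^{(n^2+n-0.4)/2+\epsilon})$; for fixed small $M$ and $X \to \infty$, a term of size $X^{(n^2+n)/2}(\log X)/M$ cannot be bounded by $C\,X^{(n^2+n)/2}/M + O_\epsilon(X^{(n^2+n-0.4)/2+\epsilon})$ with $C$ independent of $X$. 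Thus for $n=4$ the bound achievable by this method is $O(X^{(n^2+n)/2}(\log M)/M)$, not $O(X^{(n^2+n)/2}/M)$. This reflects a small imprecision in the paper's own statement and remark (the claimed ``strengthening'' over Bhargava--Shankar--Wang, which removes the $X^\epsilon$ from the main term, is not quite attainable at $n=4$); the weaker bound is entirely sufficient for the paper's application in \S\ref{sec-sieve}, where $M$ is eventually taken to be a small power of $X$. In short, your analysis is more careful than the paper's one-line proof, and the gap you ran into is the paper's gap, not an error in your reasoning — but the patch you offered does not close it.
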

\begin{proof}
The required bound follows immediately from the proof of Theorem~\ref{thm-maincount2} by simply summing~\eqref{eq-slicefin} over only those $b$ such that ${\vert}\mc{Z}(b){\vert} \geq M^2$.
\end{proof}
\begin{remark}
Theorem~\ref{thm-bsw} constitutes a slight strengthening of a result previously proven in \cite[Sections 2.3--2.4,\, 3.3--3.4]{sqfrval}, where Bhargava, Shankar, and Wang used the averaging method to obtain an upper bound of $O_\epsilon\big(X^{\frac{n^2+n}{2}+\epsilon}/M\big) + O_\epsilon\big(X^{\frac{n^2+n-0.4}{2}+\epsilon}\big)$. 
\end{remark}

This concludes the proof of Theorem \ref{thm-big}. We finish by noting that Theorem~\ref{thm-acceptcubic}  follows from Theorem~\ref{thm-big} by applying the Jacobian change-of-variables result in Proposition~\ref{prop-jac} to each $p$-adic integral, with $\phi$ taken to be the characteristic function of $(S_p)_0$. Indeed, Propositions~\ref{prop-jac} and~\ref{prop-jacobian} together imply the equality of each factor at the odd primes; at $p=2$, there is an extra factor of $2^{n/2}$ when $n$ is even, \mbox{which perfectly accounts for the corresponding factor at infinity.}

\section{A local-to-global principle for the action of $\mc{P}(\Z)$ on $W_0(\Z)$} \label{sec-locglob}

In this section, we develop an alternate method for counting reducible $\GG(\Z)$-orbits on $W(\Z)$. This method, which we called ``Method II'' in Section~\ref{sec-whatsnew}, consists of the following steps:
\begin{itemize}[leftmargin=20pt]
\item First, in Section~\ref{sec-stabtriv}, we consider a general representation with trivial generic stabilizer, and we prove that the integral orbits of such a representation satisfy a strong local-to-global principle, which is stated precisely in Theorem~\ref{thm-genlocglob}.
\item As shown in Proposition~\ref{lem-cantranslate}, the action of the group $\mc{P}$ on the reducible hyperplane $W_0$ is a representation with trivial generic stabilizer. In Section~\ref{sec-deducecase}, we deduce Theorem~\ref{thm-stronglocglob} by applying Theorem~\ref{thm-genlocglob} to the action of $\mc{P}$ on $W_0$.
\item Finally, in Section~\ref{sec-sieve}, we explain how to use Theorem~\ref{thm-stronglocglob} to deduce asymptotics for the number of $\mc{P}(\Z)$-orbits on $W_0(\Z)$---and hence also for the number of reducible $G(\Z)$-orbits on $W(\Z)$. The main analytic ingredient is an upper bound obtained by Bhargava, Shankar, and Wang on the number of $\mc{P}(\Z)$-orbits of elements $B \in W_0(\Z)$ with the \mbox{property that $\lambda(B)$ is large.}
\end{itemize}

\subsection{Group actions with trivial stabilizers} \label{sec-stabtriv}

In this section, we work with the following data: an algebraic group $H$, finite-dimensional $H$-representations $V$ and $I$, and an $H$-equivariant morphism $\phi \colon V \to I$, all defined over $\Z$. Note that we do not require $\phi$ to be a linear map. Any polynomial map will suffice. Suppose that the group $H$ has class number $1$ over $\Q$, meaning that $H(\mathbb{A}_{\Q})$ is the product of its subgroups $H(\mathbb{A}_{\Z})$ and $H(\Q)$ (i.e., for every $h \in H(\mathbb{A}_{\Q})$, there exist $h' \in H(\mathbb{A}_{\Z})$ and $h'' \in H(\Q)$ such that $h = h'h''$). Here $\mathbb{A}_\Q$ denotes the ring of adeles, and $\mathbb{A}_\Z$ is the ring of everywhere integral adeles. Suppose further that, for some nonempty open subscheme $\mc{I} \subset I$, defined over $\Z$, the following two assumptions hold:
\begin{enumerate}[leftmargin=20pt]
\item For every $i \in \mc{I}(\C)$, the set $\{v \in V(\C) : \phi(v) = i\}$ forms a single nonempty $H(\C)$-orbit. 
\item For every $i \in \mc{I}(\C)$ and every (or equivalently, any) $v\in V(\C)$ with $\phi(v) = i$, we have $\Stab_{H(\C)}(v) = 1$.
\end{enumerate}
In this setting, we prove the following strong local-to-global principle for the action of $H$ on $V$:
\begin{theorem} \label{thm-genlocglob}
Suppose that $i \in \mc{I}(\Z)$ is an element contained in the image $\phi(V(\Z)$. For each prime $p$, let $v_p \in V(\Z_p)$ be such that $\phi(v_p) = i$. Then there exists $v \in V(\Z)$, unique up to the action of $H(\Z)$, such that $v$ is $H(\Z_p)$-equivalent to $v_p$ for each prime $p$.
\end{theorem}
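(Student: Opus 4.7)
The plan is to use the class number $1$ hypothesis together with the simple transitivity and trivial stabilizer coming from (1) and (2) to bridge the local data $(v_p)_p$ to a global element $v \in V(\Z)$.

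First, for each prime $p$ I would construct a canonical element $h_p \in H(\Q_p)$ satisfying $h_p v_0 = v_p$. Since $i \in \mc{I}(\Z) \subset \mc{I}(\C)$, assumption (1) supplies some $h \in H(\C)$ with $h v_0 = v_p$, and (2) makes it unique. Galois descent then places $h$ inside $H(\Q_p)$: for any $\sigma \in \Gal(\overline{\Q_p}/\Q_p)$ we have $\sigma(h) v_0 = v_p$, forcing $\sigma(h) = h$ by uniqueness. Set $h_p \defeq h$.

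Next I would show that $h_p \in H(\Z_p)$ for all but finitely many primes, so that $(h_p)_p$ defines an element of $H(\mathbb{A}_{\Q,{\rm fin}})$. The fiber $S_i \defeq \phi^{-1}(i)$ is a closed $\Z$-subscheme of $V$, and thanks to (1) and (2) its base change to $\Q$ is an $H_\Q$-torsor (trivialized by $v_0$). A standard spreading out argument yields an integer $N$ depending on $(i, v_0)$ such that the orbit map $H_{\Z[1/N]} \to (S_i)_{\Z[1/N]}$, $h \mapsto h v_0$, is an isomorphism of $\Z[1/N]$-schemes; for $p \nmid N$, the equation $h_p v_0 = v_p$ with $v_p \in S_i(\Z_p)$ then forces $h_p \in H(\Z_p)$. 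By the class number $1$ hypothesis, I can now write $(h_p)_p = (\gamma_p)_p \cdot h$ with $\gamma_p \in H(\Z_p)$ for every $p$ and $h \in H(\Q)$. Defining $v \defeq h v_0 \in V(\Q)$, for each prime $p$ we have $v = \gamma_p^{-1} h_p v_0 = \gamma_p^{-1} v_p \in V(\Z_p)$, and $\gamma_p v = v_p$, so $v$ is $H(\Z_p)$-equivalent to $v_p$. Since $v \in V(\Q) \cap \bigcap_p V(\Z_p) = V(\Z)$, this produces the required element.

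For uniqueness, suppose $v, v' \in V(\Z)$ are each $H(\Z_p)$-equivalent to $v_p$ for every $p$; then $\phi(v) = \phi(v') = i$, and for each $p$ there is $\beta_p \in H(\Z_p)$ with $\beta_p v = v'$. Applying (1) and (2) over $\Q$ (via the same Galois-descent argument) produces a unique $\mu \in H(\Q)$ with $\mu v = v'$, and uniqueness of the solution inside $H(\Q_p)$ forces $\beta_p = \mu$ for every $p$. Therefore $\mu \in H(\Q) \cap \bigcap_p H(\Z_p) = H(\Z)$, so $v$ and $v'$ are $H(\Z)$-equivalent. The principal obstacle will be the spreading out step establishing integrality of $h_p$ at almost all primes; the remaining ingredients (Galois descent, the class number $1$ decomposition, and the elementary fact that a $\Q$-valued point of an affine $\Z$-scheme which is $p$-integral at every prime is automatically $\Z$-integral) are standard.
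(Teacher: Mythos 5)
Your proof is correct and follows essentially the same route as the paper: Galois descent (using assumptions (1) and (2)) to pass from $\C$ to $\Q$ and $\Q_p$, followed by the class-number-one decomposition of $H(\mathbb{A}_\Q)$ to glue the local data, with trivial stabilizers supplying uniqueness. The only structural difference is that you unpack the adelic bijection $H(\Z)\backslash H(\Q)_{v_0} \to \prod_p H(\Z_p)\backslash H(\Q_p)_{v_0}$ from first principles---constructing the tuple $(h_p)_p$ directly and using a spreading-out argument to check it lies in $H(\mathbb{A}_{\Q,\mathrm{fin}})$---whereas the paper simply cites this bijection from the proof of Proposition~3.6 in the Bhargava--Shankar binary quartic forms paper; your spreading-out step is the right way to fill that in, though to make it fully rigorous you should note that in characteristic zero the trivial stabilizer on $\C$-points gives a scheme-theoretically free action (Cartier), hence the orbit map $H_\Q\to(S_i)_\Q$ is an isomorphism, which is what spreads out to $\Z[1/N]$.
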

Before we give the proof of Theorem~\ref{thm-genlocglob}, we first use assumptions (1) and (2) enumerated above to deduce that an analogue of assumption (1) holds for \emph{any} subfield $K \subset \C$:
\begin{lemma} \label{lem-cohom}
Let $K \subset \C$ be a subfield. For every $i \in \mc{I}(K)$, the set $\{v \in V(K) : \phi(v) = i\}$ consists of single $H(K)$-orbit.
\end{lemma}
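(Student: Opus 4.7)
The plan is to carry out a direct Galois-theoretic descent from $\C$ down to $K$, using the trivial-stabilizer hypothesis to promote the uniqueness of the transporting element over $\C$ into Galois invariance; this avoids any recourse to passing through an intermediate algebraic closure $\bar K$. I would fix $i \in \mc{I}(K)$ and set $F := \{v \in V(K) : \phi(v) = i\}$. If $F = \emptyset$ there is nothing to prove, so assume $F \neq \emptyset$ and pick a basepoint $v_0 \in F$.

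For any $v \in F$, assumption (1) produces some $h \in H(\C)$ with $h \cdot v_0 = v$, and assumption (2) makes this $h$ unique. For each $\tau \in \Aut(\C/K)$, applying $\tau$ to the identity $h \cdot v_0 = v$ and using that $v_0, v \in V(K)$ are $\tau$-fixed yields $\tau(h) \cdot v_0 = v$; the uniqueness of $h$ then forces $\tau(h) = h$, placing $h$ in $H(\C)^{\Aut(\C/K)}$.

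The remaining step is to identify $H(\C)^{\Aut(\C/K)}$ with $H(K)$. Since $H$ is an affine $K$-scheme, $H(\C) = \Hom_{K\text{-alg}}(\mathcal{O}(H), \C)$, with $\Aut(\C/K)$ acting by post-composition, so a $\tau$-fixed map takes values in the subfield $\C^{\Aut(\C/K)}$. This subfield equals $K$ by a classical argument: every element of $\C \setminus K$ is either transcendental over $K$ (and so may be moved by a $K$-automorphism of $\C$, using that $\C$ has infinite transcendence degree over $K$) or algebraic over $K$ of degree at least $2$ (and so admits a nontrivial Galois conjugate in $\C$, noting that $K$ is perfect since it has characteristic zero). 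Hence $h \in H(K)$, so $v \in H(K) \cdot v_0$ and $F$ is a single $H(K)$-orbit.

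I do not anticipate any real obstacle; the argument is essentially formal once hypotheses (1) and (2) are in hand, and no cocycle or $H^1$ computation is needed because the stabilizer group scheme is trivial. The only point that warrants explicit care is the identification $\C^{\Aut(\C/K)} = K$, a classical fact but worth spelling out because the lemma is asserted for an arbitrary subfield $K \subset \C$ rather than only over $\Q$ or a local or number field.
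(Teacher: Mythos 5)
Your argument is essentially the paper's own: pick a transporter $h\in H(\C)$ via assumption (1), observe from assumption (2) that it is unchanged under $\Aut(\C/K)$ (the paper phrases this as $h^{-1}\,{}^\sigma h \in \Stab_{H(\C)}(v)=1$, which is the same thing as uniqueness of $h$), and conclude $h\in H(K)$. Your extra care in unpacking $H(\C)^{\Aut(\C/K)}=H(K)$ via $\C^{\Aut(\C/K)}=K$ is a reasonable addition, since the paper silently invokes this and even writes $\Gal(\C/K)$ where $\Aut(\C/K)$ is meant.

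One small slip in your parenthetical justification of $\C^{\Aut(\C/K)}=K$: it is not true for an arbitrary subfield $K\subset\C$ that $\C$ has infinite transcendence degree over $K$. For instance, take a transcendence basis $B$ of $\C$ over $\Q$, remove a single element $x_0$, and let $K$ be the algebraic closure in $\C$ of $\Q(B\smallsetminus\{x_0\})$; then $\operatorname{trdeg}(\C/K)=1$. The conclusion you need is still true in that case (for $x$ transcendental over $K$, one can send $x$ to any other element transcendental over $K$, e.g.\ $x+1$, and extend the resulting $K$-isomorphism of purely transcendental extensions to all of $\C$ by extending to algebraic closures), so the lemma is unaffected, but the stated reason is not the right one. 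This is a cosmetic issue that does not touch the core of the proof.
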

\begin{proof}
Let $i \in \mc{I}(K)$, and let $v,v' \in V(K)$ with $\phi(v) = \phi(v') = i$. By assumption (1), there exists $h \in H(\C)$ such that $v' = h \cdot v$. Then for any $\sigma \in \on{Gal}(\C/K)$, we have $v' = {}^\sigma h \cdot v$, so $h^{-1}{}^\sigma h \in \on{Stab}_{H(\C)}(v) = 1$ by assumption (2). It follows that $h$ is fixed by $\on{Gal}(\C/K)$, and so $h \in H(K)$, as necessary.
\end{proof}

We now use Lemma~\ref{lem-cohom}, assumption (2), and the fact that $H$ has class number $1$ over $\Q$ to complete the proof of the theorem:
\begin{proof}[Proof of Theorem~\ref{thm-genlocglob}]
Let us temporarily drop assumption (2). For a principal ideal domain $R$ with fraction field $K$ and an element $v \in V(R)$, denote by $H(K)_{v} \defeq \{h \in H(K) : h \cdot v \in H(R)\}$. Then it is clear that the set of $H(R)$-orbits contained in the $H(K)$-orbit of an element $v \in V(R)$ is in natural bijection with the double coset space $H(R) \backslash H(K)_{v}/\on{Stab}_{H(K)}(v)$. With assumption (2) reinstated, this double coset space is simply given by $H(R) \backslash H(K)_{v}$ as long as the $H$-invariant of $v$ lies in the subset $\mc{I}(R) \subset I(R)$.

Now, using the fact that $H$ has class number $1$ over $\Q$, it is proven in~\cite[proof of Proposition~3.6]{MR3272925}\footnote{To be clear,~\cite[Proposition~3.6]{MR3272925} concerns the case $H  = \on{PGL}_2$, but it is evident that the same argument goes through for any group $H$ of class number $1$ over $\Q$.} that the diagonal embedding $H(\Q) \hookrightarrow \prod_p H(\Q_p)$ induces a bijection
\begin{equation} \label{eq-locglobbij}
H(\Z) \backslash H(\Q)_{v_0} \longrightarrow \prod_p H(\Z_p) \backslash H(\Q_p)_{v_0}.
\end{equation}
Let $v_0\in V(\Z)$ be an element such that $\phi(v_0)=i$.
By the result of the previous paragraph, the bijection in~\eqref{eq-locglobbij} may be regarded as identifying the set of $H(\Z)$-orbits contained in the $H(\Q)$-orbit of $v_0$ with the product over all primes $p$ of the set of $H(\Z_p)$-orbits contained in the $H(\Q_p)$-orbit of $v_0$.

By Lemma~\ref{lem-cohom}, which implies that the $H(\Q_p)$-orbit of $v_0$ is equal to that of $v_p$ for each prime $p$, we may view the tuple $(v_p)_p$ as an element of the right-hand side of~\eqref{eq-locglobbij}. Then, under the bijection, the tuple $(v_p)_p$ corresponds to the $H(\Z)$-orbit of the desired element $v \in V(\Z)$.
\end{proof}

\subsection{Proof of Theorem~\ref{thm-stronglocglob}} \label{sec-deducecase}

We now deduce Theorem~\ref{thm-stronglocglob} from Theorem~\ref{thm-genlocglob}. To do so, we take $H = \mc{P}$, $V = W_0$, $I = U$, $\phi = \on{inv}$, and $\mc{I} \subset U$ to be the open subscheme consisting of nondegenerate polynomials. Note that the group $\mc{P}$ clearly has class number $1$ over $\Q$, and note that assumptions (1) and (2) follow immediately from Proposition~\ref{lem-cantranslate}. Applying Theorem~\ref{thm-genlocglob} then yields the following: for any $f \in \mc{I}(\Z)$, \emph{if} there exists $B_0 \in W_0(\Z)$ such that $\on{inv}(B_0) = f$ and \emph{if} there exists $B_p \in W_0(\Z_p)$ such that $\on{inv}(B_p) = f$ for each prime $p$, then there exists $B \in W_0(\Z)$, unique up to the action of $\mc{P}(\Z)$, such that $B$ is $\mc{P}(\Z_p)$-equivalent to $B_p$ for each prime $p$. To prove Theorem~\ref{thm-stronglocglob}, it now remains to verify the existence of the elements $B_0$ and $B_p$ for each prime $p$.

When $n$ is even, the existence of the elements $B_0$ and $B_p$ is implied by the existence of the integral section $\sigma_0$ given in Section~\ref{sec-reduct}---indeed, when $n$ is even, we have restricted our consideration to those monic polynomials whose $x^i$-coefficients are divisible by $2$ for each odd $i$, so the section $\sigma_0$ is defined over $\Z$ in this case. On the other hand, when $n$ is odd, the section $\sigma_0$ is not defined over $\Z$. Instead, it is shown in~\cite[Section 4.1]{MR3782066} that for any principal ideal domain $R$ and for each $f \in U(R)$, there exists a pair $(A,B)$ of $n \times n$ symmetric matrices with entries in $R$ such that: $A$ is split over $K = \on{Frac}(R)$; $A$ and $B$ share a maximal isotropic space over $K$; and $\on{det}(xA + yB) = (-1)^{\lfloor\frac{n}{2}\rfloor}f(x,y)$. By the classification of unimodular symmetric bilinear forms (see~\cite[Chapter V]{MR0344216}), $A$ is $\on{GL}_n(R)$-equivalent to $\mc{A}$. By translating $B$ with the same element of $\on{GL}_n(R)$, we obtain $B' \in W_0(R)$ satisfying $\on{inv}(B') = f$.

\subsection{Proof of Theorem~\ref{thm-acceptcubic}} \label{sec-sieve}

We now use Theorem~\ref{thm-stronglocglob} to give a second proof of Theorem~\ref{thm-acceptcubic}. We call a subset $\mathfrak{S} \subset W_0(\Z)$ a \emph{big family} if $\mathfrak{S} = W(\Z)^{(r)}_0 \cap \bigcap_p \mathfrak{S}_p$, where the sets $\mathfrak{S}_p \subset W_0(\Z_p)$ satisfy the \mbox{following properties:}
\begin{enumerate}[leftmargin=20pt]
\item[(1)] $\mathfrak{S}_p$ is $\mc{P}(\Z_p)$-invariant and is the preimage under reduction modulo $p^j$ of a nonempty subset of $W_0(\Z/p^j\Z)$ for some $j > 0$ for each $p$; and
\item[(2)] $\mathfrak{S}_p$ contains all elements $B \in W_0(\Z_p)$ such that, for all $p \gg 1$, we have that $b_{i(n-i)}(B)$ is a $p$-adic unit for some $i$.
\end{enumerate}
We then have the following variant of Theorem~\ref{thm-acceptcubic}, from which Theorem~\ref{thm-acceptcubic} readily follows by taking $\mathfrak{S}_p = (S_p)_0$ for each $p$. Indeed, given this choice of $\mathfrak{S}$, the asymptotics for reducible $\GG(\Z)$-orbits on $S$ are the same as those for $\mc{P}(\Z)$-orbits on $\mathfrak{S}$, as explained in Section~\ref{sec-whatsnew}.
\begin{theorem} \label{thm-acceptcubic2}
Let $\mathfrak{S} \subset W_0(\Z)$ be a big family. Then the number of $\mc{P}(\Z)$-orbits on $\mathfrak{S}$ of height up to $X$ is given by
\begin{equation} \label{eq-midacceptcubic2}
 \Bigl(\prod_p \int_{f \in U(\Z_p) } \#\left(\frac{\on{inv}^{-1}(f)\cap \mathfrak{S}_p}{\mc{P}(\Z_p)}\right) df\Bigr)\cdot N^{(r)}(X) + o\big(X^{\frac{n^2+n}{2}}\big),
\end{equation}
 where $df$ denotes the Euclidean measure on $U(\Z_p)$, normalized so that $U(\Z_p)$ has volume $1$.
\end{theorem}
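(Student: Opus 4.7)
The plan is to execute Method II as outlined in \S\ref{sec-whatsnew}, using Theorem~\ref{thm-stronglocglob} to convert the global orbit count into a sum over invariant polynomials of products of local orbit counts, and then to evaluate this sum by a lattice-point argument on $U(\Z)$ together with a sieve. First, I would partition the $\mc{P}(\Z)$-orbits on $\mathfrak{S}$ of height $< X$ according to their invariant polynomial $f \in U(\Z)^{(r)}$. When $n$ is even, the parity condition in Theorem~\ref{thm-stronglocglob} is automatic for invariants of elements of $W_0(\Z)$ (cf.\ the remark after Theorem~\ref{thm-maincount2}), so Theorem~\ref{thm-stronglocglob} yields
\[
\#\!\left(\frac{\{B \in \mathfrak{S} : \on{H}(B) < X\}}{\mc{P}(\Z)}\right)
= \sum_{\substack{f \in U(\Z)^{(r)} \\ \on{H}(f) < X}} \prod_p \#\!\left(\frac{\on{inv}^{-1}(f) \cap \mathfrak{S}_p}{\mc{P}(\Z_p)}\right),
\]
where each local factor is eventually $1$ in $f$ so that the product is finite for each fixed $f$.

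Second, I would treat the special case in which $\mathfrak{S}$ is cut out by congruence conditions at only finitely many primes $p \le Y$. In this case the product $\prod_p \#(\on{inv}^{-1}(f) \cap \mathfrak{S}_p / \mc{P}(\Z_p))$ depends only on the residue class of $f$ modulo $M_Y := \prod_{p \le Y} p^{j_p}$, where the $j_p$ come from the definition of big family. A standard lattice-point count on $U(\Z)$, together with the asymptotic $\#\{f \in U(\Z)^{(r)} : \on{H}(f) < X\} \sim N^{(r)}(X)$, then expresses the right-hand side of the displayed equation above as $\bigl(\prod_{p \le Y} \int_{U(\Z_p)} \#(\on{inv}^{-1}(f) \cap \mathfrak{S}_p/\mc{P}(\Z_p))\,df\bigr)\cdot N^{(r)}(X)$ up to an error of order $o(X^{(n^2+n)/2})$.

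Third, I would pass from finitely to infinitely many congruence conditions by an inclusion--exclusion sieve. By condition~(2) in the definition of big family, for every prime $p$ sufficiently large, any $B \in W_0(\Z_p) \smallsetminus \mathfrak{S}_p$ must satisfy $p \mid b_{i(n-i)}(B)$ for every $i \in \{1,\ldots,\lfloor n/2\rfloor\}$; inspection of \eqref{eq-defoflambda} then shows $|\lambda(B)| \le p^{-2}$, i.e.\ $p^2$ divides $\lambda(B)$ as an integer. Hence the total contribution from $B \in \mathfrak{S}'_X$ violating the congruences at some prime $p > Y$ is bounded by the number of $\mc{P}(\Z)$-orbits on $\{B \in W_0(\Z) : \on{H}(B) < X,\ |\lambda(B)| \ge Y^2\}$, which by Theorem~\ref{thm-bsw} with $M = Y$ is $O(X^{(n^2+n)/2}/Y) + O_\epsilon(X^{(n^2+n-0.4)/2+\epsilon})$. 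Choosing $Y = Y(X) \to \infty$ sufficiently slowly (e.g.\ $Y = \log X$) drives this tail into the $o(X^{(n^2+n)/2})$ error, and letting $Y \to \infty$ in the formula from the second step gives the full product over all $p$, yielding~\eqref{eq-midacceptcubic2}.

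The main obstacle, as in any sieving argument, will be controlling the tail: proving that the bound from Theorem~\ref{thm-bsw} is strong enough and sufficiently uniform in $Y$ to permit passage to the limit. The point that makes the argument work is precisely the defining feature of a big family, namely that failure of the $\mathfrak{S}_p$ condition at a large prime $p$ forces a divisibility of the polynomial $\lambda$ by $p^2$. This cleanly converts the sieve problem into a question about orbits with large $|\lambda|$, which Theorem~\ref{thm-bsw} resolves with enough uniformity to let $Y$ go to infinity with $X$. Once these pieces are in place the proof is essentially bookkeeping; no further analytic input beyond the strong local-to-global principle and the uniformity estimate is required.
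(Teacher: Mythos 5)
Your overall plan — convert the global count into a sum over $f\in U(\Z)$ of products of local orbit counts via Theorem~\ref{thm-stronglocglob}, then close the sum by a sieve with tail controlled by Theorem~\ref{thm-bsw} — is the right skeleton and matches what the paper does. But your middle step has a genuine gap, and there is a separate bookkeeping error in the tail bound.

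The gap is in your claim that, when $\mathfrak{S}$ is cut out by congruence conditions at primes $p\le Y$, the product $\prod_p \#(\on{inv}^{-1}(f)\cap\mathfrak{S}_p/\mc{P}(\Z_p))$ depends only on $f$ modulo $M_Y=\prod_{p\le Y}p^{j_p}$. This is false for two related reasons. First, the local orbit count at a prime $p\le Y$ is \emph{not} a function of $f\bmod p^{j_p}$: by Proposition~\ref{lem-cantranslate} the count is $1$ when $\disc(f)$ is a $p$-adic unit, but for $f$ with $p^2\mid\disc(f)$ the fiber $\on{inv}^{-1}(f)\cap W_0(\Z_p)$ can decompose into several $\mc{P}(\Z_p)$-orbits, and this depends on $p$-adic valuations of $\disc(f)$ (and of the slicing coordinates $b_{i(n-i)}$) that are unbounded and not controlled by any fixed modulus. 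Second, for $p>Y$ the factor is not always $1$: even with $\mathfrak{S}_p=W_0(\Z_p)$, the local count exceeds $1$ whenever $p^2\mid\disc(f)$. So the summand in $\sum_f\prod_p\#(\cdots)$ does not factor through a finite congruence lattice, and a ``standard lattice-point count on $U(\Z)$'' does not give the asserted formula. The paper handles this precisely by first partitioning $\mathfrak{S}$ into the subfamilies $\mathfrak{S}(\mathfrak{b})=\{B\in\mathfrak{S}:|\mc{Z}(B)|=\mathfrak{b}\}$ for each integer $\mathfrak{b}\ge1$: once $\mathfrak{b}$ is fixed, Proposition~\ref{lem-cantranslate} forces the local count to equal $0$ or $1$ at every $p\nmid\mathfrak{b}$, the remaining product runs over the finitely many $p\mid\mathfrak{b}$, and one further slices each $U(\Z_p)$ into level sets $U_{p,j}$ for the local-count function (not just residue classes). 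Counting $f\in U(\Z)$ in the resulting set then genuinely requires the squarefree sieve of Bhargava--Shankar--Wang on $U(\Z)$ — a nontrivial analytic input your proposal omits — because the condition at large $p$ is ``$p^2\nmid\disc(f)$,'' not a finite-modulus congruence. Only after these two moves does the sum over $\mathfrak{b}<M$ factor as you want.

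Separately, your tail bound misstates Theorem~\ref{thm-bsw} and uses the wrong polynomial. The theorem bounds orbits with $|\mc{Z}(B)|\ge M^2$, where $\mc{Z}(B)=\prod_k b_{k(n-k)}(B)^{2k}$ (and the extra factor $b_{(g+1)(g+1)}^{g+1}$ when $n=2g+2$), \emph{not} with $|\lambda(B)|\ge M^2$. Your inference ``$p\mid b_{i(n-i)}(B)$ for all $i$ implies $p^2\mid\lambda(B)$'' fails when $n=3$, since then $\lambda(B)=b_{12}(B)$ has degree $1$. It does hold with $\lambda$ replaced by $\mc{Z}$, because $\deg\mc{Z}\ge 2$ for all $n\ge3$, and this is exactly why the paper phrases Theorem~\ref{thm-bsw} in terms of $\mc{Z}$. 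Finally, your choice $Y=\log X$ and ``letting $Y\to\infty$ in the formula'' does not by itself justify interchanging the limit with the infinite product over $p$; the paper handles this carefully by proving the lower bound ($\ge$) via $M\to\infty$ and the upper bound ($\le$) separately with the tail estimate, taking $M=X^{1/(4n^2)}$ to balance the error terms, and verifying convergence of the Euler product via Proposition~\ref{prop-jac} and the convergence of $\sum 1/\text{(powerful numbers)}$.
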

\begin{proof}
To start, fix an integer $\mathfrak{b} \geq 1$, and suppose its prime factorization is given by $\mathfrak{b} = \prod_p p^{e_p}$. We first prove an analogue of Theorem~\ref{thm-acceptcubic} for the subfamily $\mathfrak{S}(\mathfrak{b}) \defeq \{B \in \mathfrak{S} : {\vert}\mc{Z}(B){\vert} = \mathfrak{b}\}$. Note that the subfamily $\mathfrak{S}(\mathfrak{b})$ is itself a big family, where $\mathfrak{S}(\mathfrak{b})_{p} = \{B \in \mathfrak{S}_p : {\vert}\mc{Z}(B){\vert}_p = {\vert}\mathfrak{b}{\vert}_p\}$.

If $\mathfrak{S}(\mathfrak{b}) = \varnothing$, then the result is tautologically true, so we may assume that $\mathfrak{S}(\mathfrak{b}) \neq \varnothing$. For each prime $p \mid \mathfrak{b}$, we partition $U(\Z_p)$ as $U(\Z_p) = \bigsqcup_{j = 1}^{m_p} U_{p,j}$, where each $U_{p,j}$ is a level set for the function that sends $f \in U(\Z_p)$ to $\#(\mc{P}(\Z_p) \backslash (\on{inv}^{-1}(f) \cap \mathfrak{S}(\mathfrak{b})_p))$. Write ``$E(m)$'' to mean ``a power of $m$ that depends only on $n$ and $\mathfrak{S}$.'' Then set $U_{p,j}$ is defined by congruence conditions modulo $E(p^{e_p})$. The quantity $\#(\mc{P}(\Z_p) \backslash (\on{inv}^{-1}(f) \cap \mathfrak{S}(\mathfrak{b})_p))$ is independent of the choice of $f \in U_{p,j}$ (by the definition of a level set) and is evidently bounded by $E(p^{e_p})$. 

Now for each prime $p \nmid \mathfrak{b}$, the proof of Proposition~\ref{lem-cantranslate} implies that $\#(\mc{P}(\Z_p) \backslash (\on{inv}^{-1}(f) \cap \mathfrak{S}(\mathfrak{b})_p)) = 1$ for each $f \in \on{inv}(\mathfrak{S}(\mathfrak{b})_p)$. It then follows from Theorem~\ref{thm-stronglocglob} that the quantity
\begin{equation} 
\label{eq-thistheprod}
\#\left(\frac{\on{inv}^{-1}(f) \cap \mathfrak{S}(\mathfrak{b})}{\mc{P}(\Z)}\right) = \prod_p \#\left(\frac{\on{inv}^{-1}(f) \cap \mathfrak{S}(\mathfrak{b})_p}{\mc{P}(\Z_p)}\right) =\prod_{p \mid \mathfrak{b}} \#\left(\frac{\on{inv}^{-1}(f) \cap \mathfrak{S}(\mathfrak{b})_p}{\mc{P}(\Z_p)}\right)
\end{equation}
is independent of the choice of $f \in \on{inv}(\mathfrak{S}(\mathfrak{b})) \cap \bigcap_p U_{p,j_p}$. Therefore, for each tuple $(j_p)_{p \mid \mathfrak{b}} \in \prod_{p \mid \mathfrak{b}} \{1,\dots, m_p\}$, we have
\begin{equation} \label{eq-fixedfquad}
\sum_{\substack{f \in U(\Z) \,\cap\, \bigcap_p U_{p,j_p} \\ \on{H}(f) < X }} \#\left(\frac{\on{inv}^{-1}(f) \cap \mathfrak{S}(\mathfrak{b})}{\mc{P}(\Z)}\right) = \#\left(\frac{\on{inv}^{-1}(f^*) \cap \mathfrak{S}(\mathfrak{b})}{\mc{P}(\Z)}\right) \cdot \sum_{\substack{f \in \on{inv}(\mathfrak{S}(\mathfrak{b})) \,\cap\, \bigcap_p U_{p,j_p} \\ \on{H}(f) < X }} 1
\end{equation}
where $f^* \in \on{inv}(\mathfrak{S}(\mathfrak{b})) \,\cap\, \bigcap_{p \mid \mathfrak{b}} U_{p,j_p}$ is any fixed element. Since $\mathfrak{S}$ is a big family, it follows that $\on{inv}(\mathfrak{S}(\mathfrak{b})_p) = U(\Z_p)$ for every $p \gg 1$ that does not divide $\mathfrak{b}$. 
As the set $\on{inv}(\mathfrak{S}(\mathfrak{b})) \,\cap\, \bigcap_p U_{p,j_p}$ is defined by congruence conditions modulo $E(\mathfrak{b})$, since $\on{inv}(\mathfrak{S}(\mathfrak{b})_p) \cap U_{p,j_p}$ is defined by congruence conditions modulo $E(p^{e_p})$ for each $p$, we obtain the following asymptotic:
\begin{equation} \label{eq-fixedfquad2}
\begin{array}{rcl}
\displaystyle \sum_{\substack{f \in \on{inv}(\mathfrak{S}(\mathfrak{b})) \,\cap\, \bigcap_p U_{p,j_p} \\ \on{H}(f) < X }} 1 & =& \displaystyle N^{(r)}(X) \cdot \prod_{p \mid \mathfrak{b}} \int_{f \in \on{inv}(\mathfrak{S}(\mathfrak{b})_p) \cap U_{p,j_p}} df \cdot \prod_{p \nmid \mathfrak{b}} \int_{f \in \on{inv}(\mathfrak{S}(\mathfrak{b})_p)} df \\
& & \qquad\qquad\qquad\qquad\qquad \,\,\,\,+ \,O_\epsilon\big(E(\mathfrak{b})X^{\frac{n^2+n-0.4}{2}+\epsilon}\big).
\end{array}
\end{equation}
 Substituting the asymptotic~\eqref{eq-fixedfquad2} into the right-hand side of~\eqref{eq-fixedfquad}, applying~\eqref{eq-thistheprod} to the resulting expression, and summing that over tuples $(j_p)_{p \mid \mathfrak{b}} \in \prod_{p \mid \mathfrak{b}} \{1,\dots, m_p\}$ yields the following:
\begin{equation}\label{eq-fixedconductoresult}
\begin{array}{rcl}
\displaystyle \sum_{\substack{f \in U(\Z)\\ \on{H}(f) < X }}  \#\left(\frac{\on{inv}^{-1}(f) \cap \mathfrak{S}(\mathfrak{b})}{\mc{P}(\Z)}\right) &= & \displaystyle N^{(r)}(X) \cdot \prod_{p} \int_{f \in U(\Z_p)} \#\left(\frac{\on{inv}^{-1}(f) \cap \mathfrak{S}(\mathfrak{b})_p}{\mc{P}(\Z_p)}\right) df \\
& &\qquad\qquad\qquad\qquad\quad  +\,O_\epsilon\big(E(\mathfrak{b})X^{\frac{n^2+n-0.4}{2}+\epsilon}\big).
\end{array}
\end{equation}

Next, we prove that the theorem holds with ``$=$'' replaced by ``$\geq$.'' For any real number $M > 1$, let $\mathfrak{S}(M) \defeq \{B \in \mathfrak{S} : {\vert}\mc{Z}(B){\vert} < M^2\}$. Summing~\eqref{eq-fixedconductoresult} over $\mathfrak{b} < M$ yields the following:
\begin{equation} \label{eq-conductorboundquad}
\begin{array}{rcl}
\displaystyle \sum_{\substack{f \in U(\Z)\\ \on{H}(f) < X }}  \#\left(\frac{\on{inv}^{-1}(f) \cap \mathfrak{S}(M)}{\mc{P}(\Z)}\right) & = & \displaystyle N^{(r)}(X) \cdot \sum_{\mathfrak{b} < M} \prod_{p} \int_{f \in U(\Z_p)} \#\left(\frac{\on{inv}^{-1}(f) \cap \mathfrak{S}(\mathfrak{b})_p}{\mc{P}(\Z_p)}\right) df  \\
& &\qquad\qquad\qquad\quad\,\,\, +\, O_\epsilon\big(E(M)X^{\frac{n^2+n-0.4}{2}+\epsilon}\big). 
\end{array}
\end{equation}
Dividing~\eqref{eq-conductorboundquad} through by $N^{(r)}(X)$ and letting $X \to \infty$, we find that
\begin{equation} \label{eq-conductorboundquad2}
\liminf_{X \to \infty} \frac{\sum_{\substack{f \in U(\Z) \\ \on{H}(f) < X }}  \displaystyle \#\left(\frac{\on{inv}^{-1}(f) \cap \mathfrak{S}}{\mc{P}(\Z)}\right) }{N^{(r)}(X)} \geq \sum_{\mathfrak{b} < M}\prod_{p } \int_{f \in U(\Z_p)} \#\left(\frac{\on{inv}^{-1}(f) \cap \mathfrak{S}(\mathfrak{b})_p}{\mc{P}(\Z_p)}\right)  df.
\end{equation}
Now, letting $M \to \infty$ on the right-hand side of~\eqref{eq-conductorboundquad2} and factoring the sum into an Euler product, we obtain the following:
\begin{align} 
     \sum_{\mathfrak{b} = 1}^\infty\prod_{p } \int_{f \in U(\Z_p)} \#\left(\frac{\on{inv}^{-1}(f) \cap \mathfrak{S}(\mathfrak{b})_p}{\mc{P}(\Z_p)}\right)  df & = \prod_p \sum_{e = 0}^\infty \int_{f \in U(\Z_p)} \#\left(\frac{\on{inv}^{-1}(f) \cap \mathfrak{S}(p^e)_p}{\mc{P}(\Z_p)}\right)  df \nonumber \\ & = \prod_p  \int_{f \in U(\Z_p)} \#\left(\frac{\on{inv}^{-1}(f) \cap \mathfrak{S}_{p}}{\mc{P}(\Z_p)}\right)  df. \label{eq-interchange}
\end{align}
Combining~\eqref{eq-conductorboundquad2} with~\eqref{eq-interchange}, we find that Theorem~\ref{thm-acceptcubic} holds with ``$=$'' replaced by ``$\geq$.'' Note that it is not \emph{a priori} clear whether the infinite sum on the left-hand side of~\eqref{eq-interchange} converges, but even if it were to diverge, it would still be equal to the product on the right-hand side! To show that the sum does indeed converge, one can apply the Jacobian change-of-variables result in Proposition~\ref{prop-jac} to each $p$-adic integral; it is then clear that the summand at $\mathfrak{b}$ is $O\big(\mathfrak{b}^{-1}\prod_{p \mid \mathfrak{b}} p^{-1}\big)$, which is sufficient, as the sum of the reciprocals of the powerful numbers converges (see~\cite{MR266878}).

It thus remains to prove the theorem with ``$=$'' replaced by ``$\leq$.'' Let $\mathfrak{S}(M)' \defeq \mathfrak{S} \smallsetminus \mathfrak{S}(M)$. Then for each $B \in \mathfrak{S}(M)'$, we have that ${\vert}\mc{Z}(B){\vert} \geq M^2$. 
From Theorem~\ref{thm-bsw}, it follows that
\begin{align}
& \sum_{\substack{f \in U(\Z) \\ \on{H}(f) < X }}  \#\left(\frac{\on{inv}^{-1}(f) \cap \mathfrak{S}(M)'}{\mc{P}(\Z)}\right) =O\big(X^{\frac{n^2+n}{2}}/M\big) + O_\epsilon\big(X^{\frac{n^2+n-0.4}{2}+\epsilon}\big). \label{eq-discboundquad}
\end{align}
On the other hand, it follows from~\eqref{eq-conductorboundquad} that
\begin{equation} \label{eq-conductorboundquad3}
\begin{array}{rcl}
\displaystyle 
\sum_{\substack{f \in U(\Z) \\ \on{H}(f) < X }}  \#\left(\frac{\on{inv}^{-1}(f) \cap \mathfrak{S}(M)}{\mc{P}(\Z)}\right) &\leq& \displaystyle N^{(r)}(X) \cdot \prod_{p} \int_{f \in U(\Z_p)} \#\left(\frac{\on{inv}^{-1}(f) \cap \mc{S}_{p}}{\mc{P}(\Z_p)}\right) df \\
& & \qquad\qquad\qquad\quad\, +\, O_\epsilon\big(E(M)X^{\frac{n^2+n-0.4}{2}+\epsilon}\big).
\end{array}
\end{equation}
Taking $M$ to grow as a sufficiently small power of $X$ and combining~\eqref{eq-discboundquad} with~\eqref{eq-conductorboundquad3} yields Theorem~\ref{thm-acceptcubic2}, and hence also Theorem~\ref{thm-acceptcubic}.
\end{proof}
We finish by noting that Theorems~\ref{thm-maincount2} and~\ref{thm-big} follow from Theorem~\ref{thm-acceptcubic} by applying the Jacobian change-of-variables result in Proposition~\ref{prop-jac} to each $p$-adic integral.

\section*{Acknowledgments}

 \noindent We thank Manjul Bhargava for several enlightening conversations, for providing numerous helpful comments and suggestions, and for offering feedback on earlier drafts of this paper. We thank the anonymous referee for giving us detailed feedback that helped us improve the exposition and readability of the paper. We are also grateful to Alex Bartel, Noam D.~Elkies, Andrew Granville, Jef Laga, Aaron Landesman, Aurel Page, Peter Sarnak, and Melanie Matchett Wood for helpful discussions, and to Will Sawin for suggesting the proof of Lemma~\ref{lem-stabismall}.

 Shankar was supported by an National Sciences and Engineering Research Council of Canada discovery grant and a Sloan fellowship. Siad was supported by a Queen Elizabeth II/Steve Halperin Scholarship in Science and Technology at the University of Toronto during the initial phase of this project and is grateful to Princeton University and the Institute for Advanced Study for providing excellent working conditions during its final stage. Swaminathan was supported by the National Science Foundation, under the Graduate Research Fellowship, as well as Award No.~2202839.

	\bibliographystyle{abbrv}
	\bibliography{references}

 \end{document}